\def\NZQ{\mathbb}               
\def\NN{{\NZQ N}}
\def\ZZ{{\NZQ Z}}
\def\RR{{\NZQ R}}
\def\Dc{{\mathcal D}}
\def\Pc{{\mathcal P}}
\def\Qc{{\mathcal Q}}
\def\Mc{{\mathcal M}}
\def\Bc{{\mathcal B}}
\def\ab{{\mathbf a}}
\def\eb{{\mathbf e}}
\def\opn#1#2{\def#1{\operatorname{#2}}} 
	\opn\chara{char} \opn\length{\ell} \opn\pd{pd} \opn\rk{rk}
	\opn\projdim{proj\,dim} \opn\injdim{inj\,dim} \opn\rank{rank}
	\opn\depth{depth} \opn\grade{grade} \opn\height{height}
	\opn\embdim{emb\,dim} \opn\codim{codim}
	\opn\Cl{Cl}
	\opn\Tr{Tr} \opn\bigrank{big\,rank}
	\opn\superheight{superheight}\opn\lcm{lcm}
	\opn\trdeg{tr\,deg}
	\opn\rdeg{rdeg}
	\opn\reg{reg} \opn\lreg{lreg} \opn\ini{in} \opn\lpd{lpd}
	\opn\size{size} \opn\sdepth{sdepth}
	\opn\link{link}\opn\fdepth{fdepth}\opn\lex{lex}
	\opn\tr{tr}
	\opn\type{type}
	\opn\gap{gap}
	\opn\arithdeg{arith-deg}
	\opn\revlex{revlex}
	\opn\div{div} \opn\Div{Div} \opn\cl{cl} \opn\Cl{Cl}
	\opn\Spec{Spec} \opn\Supp{Supp} \opn\supp{supp} \opn\Sing{Sing}
	\opn\Ass{Ass} \opn\Min{Min}\opn\Mon{Mon}
	\opn\Ann{Ann} \opn\Rad{Rad} \opn\Soc{Soc}
	\opn\Im{Im} \opn\Ker{Ker} \opn\Coker{Coker} \opn\Am{Am}
	\opn\Hom{Hom} \opn\Tor{Tor} \opn\Ext{Ext} \opn\End{End}
	\opn\Aut{Aut} \opn\id{id}
	\opn\nat{nat}
	\opn\pff{pf}
	\opn\Pf{Pf} \opn\GL{GL} \opn\SL{SL} \opn\mod{mod} \opn\ord{ord}
	\opn\Gin{Gin} \opn\Hilb{Hilb}\opn\sort{sort}
	\opn\PF{PF}\opn\Ap{Ap}
	\opn\mult{mult}
	\opn\bight{bight}
	\opn\div{div}
	\opn\Div{Div}
	\opn\aff{aff}
	\opn\relint{relint} \opn\st{st}
	\opn\lk{lk} \opn\cn{cn} \opn\core{core} \opn\vol{vol}  \opn\inp{inp}
	\opn\nilpot{nilpot}
	\opn\link{link} \opn\star{star}\opn\lex{lex}\opn\set{set}
	\opn\width{wd}
	\opn\Fr{F}
	\opn\QF{QF}
	\opn\G{G}
	\opn\type{type}\opn\res{res}
	\opn\conv{conv}
	\opn\Int{Int}
	\opn\Deg{Deg}
	\opn\Sym{Sym}
	\opn\Con{Con}
	\opn\gr{gr}
	\def\pot#1#2{#1[\kern-0.28ex[#2]\kern-0.28ex]}
	\opn\dirlim{\underrightarrow{\lim}}
	\opn\inivlim{\underleftarrow{\lim}}
	\let\to=\rightarrow
	\def\Implies{\ifmmode\Longrightarrow \else
		\unskip${}\Longrightarrow{}$\ignorespaces\fi}
	\def\implies{\ifmmode\Rightarrow \else
		\unskip${}\Rightarrow{}$\ignorespaces\fi}
	\def\iff{\ifmmode\Longleftrightarrow \else
		\unskip${}\Longleftrightarrow{}$\ignorespaces\fi}
	\newtheorem{Theorem}{Theorem}[section]
	\newtheorem{Lemma}[Theorem]{Lemma}
	\theoremstyle{definition}
	\newtheorem{Example}[Theorem]{Example}
\begin{document}

\title[Gorenstein polytopes]{Bounded powers of edge ideals: Gorenstein polytopes}

\author[T.~Hibi]{Takayuki Hibi}
\author[S.~A.~ Seyed Fakhari]{Seyed Amin Seyed Fakhari}

\address{(Takayuki Hibi) Department of Pure and Applied Mathematics, Graduate School of Information Science and Technology, Osaka University, Suita, Osaka 565--0871, Japan}
\email{hibi@math.sci.osaka-u.ac.jp}
\address{(Seyed Amin Seyed Fakhari) Departamento de Matem\'aticas, Universidad de los Andes, Bogot\'a, Colombia}
\email{s.seyedfakhari@uniandes.edu.co}

\subjclass[2020]{52B20, 13H10}

\keywords{Discrete polymatroid, Gorenstein polytope}

\begin{abstract}
Let $S=K[x_1, \ldots,x_n]$ denote the polynomial ring in $n$ variables over a field $K$ and $I(G) \subset S$ the edge ideal of a finite graph $G$ on $n$ vertices.  Given a vector $\mathfrak{c}\in\NN^n$ and an integer $q\geq 1$, we denote by $(I(G)^q)_{\mathfrak{c}}$ the ideal of $S$ generated by those monomials belonging to $I(G)^q$ whose exponent vectors are componentwise bounded above by $\mathfrak{c}$. Let $\delta_{\mathfrak{c}}(I(G))$ denote the largest integer $q$ for which $(I(G)^q)_{\mathfrak{c}}\neq (0)$.  Since $(I(G)^{\delta_{\mathfrak{c}}(I)})_{\mathfrak{c}}$ is a polymatroidal ideal, it follows that its minimal set of monomial generators is the set of bases of a discrete polymatroid $\Dc(G,\mathfrak{c})$.  In the present paper, a classification of Gorenstein polytopes of the form $\conv(\Dc(G,\mathfrak{c}))$ is studied.
\end{abstract}

\maketitle

\thispagestyle{empty}

\section{Introduction} \label{sec1}

Let $S=K[x_1, \ldots,x_n]$ denote the polynomial ring  in $n$ variables over a field $K$ with $n \geq 3$.  If $u \in S$ is a monomial, then $M_{\leq u}$ stands for the set of those monomials $w \in S$ which divide $u$.  In particular, $1 \in M_{\leq u}$ and $u \in M_{\leq u}$.  Let $G$ be a finite graph on the vertex set $V(G)=\{x_1, \ldots, x_n\}$, where $n \geq 3$, with no loop, no multiple edge and no isolated vertex, and $E(G)$ the set of edges of $G$.  Recall that the edge ideal of $G$ is the ideal $I(G) \subset S$ which is generated by those $x_ix_j$ with $\{x_i, x_j\} \in E(G)$. Let $\ZZ_{>0}$ denote the set of positive integers. Given a vector $\mathfrak{c}=(c_1,\ldots,c_n)\in (\ZZ_{>0})^n$ and an integer $q\geq 1$, we denote by $(I(G)^q)_\mathfrak{c}$ the ideal of $S$ generated by those monomials $x_1^{a_1}\cdots x_n^{a_n} \in I(G)^q$ with $a_i \leq c_i$ for each $i=1, \ldots, n$.  Let $\delta_{\mathfrak{c}}(I(G))$ denote the biggest integer $q$ for which $(I(G)^q)_\mathfrak{c} \neq (0)$.  Then $(I(G)^{\delta_{\mathfrak{c}}(I(G))})_\mathfrak{c}$ is a polymatroidal ideal (\cite[Theorem 4.3]{HSF}).  Let $\Bc(G,\mathfrak{c})$ denote the minimal set of monomial generators of $(I(G)^{\delta_{\mathfrak{c}}(I(G))})_\mathfrak{c}$.   Also, set $\Mc(G,\mathfrak{c}):= \{ M_{\leq u} : u \in \Bc(G,\mathfrak{c})\}$ and
\[
\Dc(G,\mathfrak{c}):=\{(a_1, \ldots, a_d) \in \ZZ^d : x_1^{a_1}\cdots x_n^{a_n} \in \Mc(G,\mathfrak{c})\}.
\]
The unit coordinate vectors $\eb_1, \ldots, \eb_n$ of $\RR^n$ together with the origin $(0,\ldots, 0) \in \RR^d$ belong to $\Dc(G,\mathfrak{c})$.  Since $(I(G)^{\delta_{\mathfrak{c}}(I(G))})_\mathfrak{c}$ is a polymatroidal ideal, it follows from \cite[Theorem 2.3]{HH_discrete} that $\Dc(G,\mathfrak{c})$ is a discrete polymatroid \cite[Definition 2.1]{HH_discrete}.  Now, we introduce $\conv(\Dc(G,\mathfrak{c})) \subset \RR^n$, which is the convex hull of $\Dc(G,\mathfrak{c})$ in $\RR^n$.  It then follows from \cite[Theorem 3.4]{HH_discrete} that $\conv(\Dc(G,\mathfrak{c}))$ is a polymatroid \cite[p.~240]{HH_discrete}.  

Let $2^{[n]}$ denote the set of subsets of $[n]:=\{1, \ldots, n\}$.  The {\em ground set rank function} \cite[p.~243]{HH_discrete} $\rho_{(G,\mathfrak{c})} : 2^{[n]} \to \ZZ_{>0}$ of $\conv(\Dc(G,\mathfrak{c}))$ is defined by setting
\[
\rho_{(G,\mathfrak{c})}(X) = \max\left\{\sum_{i \in X}a_i : x_1^{a_1}\cdots x_n^{a_n} \in \Bc(G,\mathfrak{c})\right\}
\]
for $\emptyset \neq X \subset [n]$ together with $\rho_{(G,\mathfrak{c})}(\emptyset)=0$. A nonempty subset $A\subset [n]$ is called {\em $\rho_{(G,\mathfrak{c})}$-closed} if for any $B\subset [n]$ with $A \subsetneq B$, one has $\rho_{(G,\mathfrak{c})}(A) < \rho_{(G,\mathfrak{c})}(B)$. A nonempty subset $A\subset [n]$ is called {\em $\rho_{(G,\mathfrak{c})}$-separable} if there exist nonempty subsets $A'$ and $A''$ of $[n]$ with $A = A' \cup A''$ and $A' \cap A'' = \emptyset$ for which $\rho_{(G,\mathfrak{c})}(A) = \rho_{(G,\mathfrak{c})}(A') + \rho_{(G,\mathfrak{c})}(A'')$.  

Our original motivation to organize the present paper is to classify the Gorenstein polytopes of the form $\conv(\Dc(G,\mathfrak{c}))$.  First, recall what Gorenstein polyotopes are. A convex polytope $\Pc \subset \RR^n$ is called a {\em lattice polytope} if each of whose vertices belongs to $\ZZ^n$.  A {\em reflexive polytope} is a lattice polytope $\Pc \subset \RR^n$ of dimension $n$ for which the origin of $\RR^n$ belongs to the interior of $\Pc$ and the {\em dual polytope} 
$$
\Pc^\vee = \{ (x_1, \ldots, x_n) \in \RR^n : \sum_{i=1}^{n} x_iy_i \leq 1, \forall (y_1, \ldots, y_n) \in \Pc \}
$$
of $\Pc$ is again a lattice polytope.  A lattice polytope $\Pc \subset \RR^n$ of dimension $n$ is called {\em Gorenstein} if there is an integer $\delta > 0$ together with a vector $\ab \in \ZZ^n$ for which $\delta \Pc - \ab$ is a reflexive polytope (\cite{Hcombinatorica}). The following lemma \cite[Theorem 7.3]{HH_discrete} has a key role in this paper.

\begin{Lemma}[\cite{HH_discrete}]
\label{criterion}
The lattice polytope $\conv(\Dc(G,\mathfrak{c})) \subset \RR^n$ is Gorenstein if and only if there is an integer $k >0$ for which
\[
\rho_{(G,\mathfrak{c})}(A)=\frac{1}{\, k \,}(|A|+1)
\]
for all $\rho_{(G,\mathfrak{c})}$-closed and $\rho_{(G,\mathfrak{c})}$-inseparable subsets $A \subset [n]$.    
\end{Lemma}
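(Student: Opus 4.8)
To prove Lemma~\ref{criterion}, the plan is to combine the explicit description of the facets of the polymatroid polytope $P := \conv(\Dc(G,\mathfrak{c}))$ with the standard reflexivity criterion phrased in terms of lattice distances of facets from the origin. Write $\rho := \rho_{(G,\mathfrak{c})}$. Since $\eb_1, \ldots, \eb_n$ and the origin all belong to $\Dc(G,\mathfrak{c})$, the lattice polytope $P$ has dimension $n$. The first step is to pin down the facets of $P$: there are two families, namely, for each $i \in [n]$ the hyperplane $\{x : x_i = 0\}$ --- a facet because $\{x \in P : x_i = 0\}$ already contains the origin together with the $n-1$ vectors $\eb_j$, $j \neq i$ --- and, for each $\rho$-closed and $\rho$-inseparable $A \subseteq [n]$, the hyperplane $\{x : \sum_{i \in A} x_i = \rho(A)\}$. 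That these are exactly the facets is part of the facet theory of polymatroid polytopes developed in \cite{HH_discrete}: the defining inequalities of $P$ are $x_i \geq 0$ for $i \in [n]$ together with $\sum_{i \in A} x_i \leq \rho(A)$ for $A \subseteq [n]$, and among the latter the irredundant ones are precisely those with $A$ closed and inseparable (the singleton case $A = \{i\}$, occurring when $\{i\}$ is $\rho$-closed, being already subsumed).

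The second step is to recall the standard fact that a full-dimensional lattice polytope $Q \subset \RR^n$ with the origin in its interior is reflexive if and only if each facet of $Q$ lies on a hyperplane $\{x : \langle x, w\rangle = 1\}$ for some primitive integer vector $w$; this is immediate from the description of the vertices of the dual polytope $Q^\vee$ in terms of the facets of $Q$. It follows that a lattice polytope $P$ is Gorenstein precisely when there are $\delta \in \ZZ_{>0}$ and $\ab \in \ZZ^n$ for which, writing each facet of $P$ as $\{x : \langle x, w_F\rangle = b_F\}$ with $w_F$ primitive integral and $b_F \in \ZZ$, one has $\delta\, b_F - \langle \ab, w_F\rangle = 1$ for every facet $F$; indeed the corresponding facet of $\delta P - \ab$ then lies on $\{y : \langle y, w_F\rangle = \delta\, b_F - \langle \ab, w_F\rangle\}$, and these equalities simultaneously force $\delta P - \ab$ to be a lattice polytope with the origin in its interior whose facets all sit at lattice distance $1$.

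The third step is to feed the facets of $P = \conv(\Dc(G,\mathfrak{c}))$ into this criterion. A facet of the first family, $x_i \geq 0$, has $w_F = -\eb_i$ and $b_F = 0$, so the requirement becomes $a_i = 1$; hence $\ab = (1, \ldots, 1)$ is forced. A facet of the second family, arising from a $\rho$-closed $\rho$-inseparable $A$, has $w_F = \sum_{i \in A} \eb_i$ and $b_F = \rho(A)$, so with $\ab = (1, \ldots, 1)$ the requirement becomes $\delta\, \rho(A) - |A| = 1$, that is, $\rho(A) = \frac{1}{\delta}(|A|+1)$. Writing $k := \delta$, both implications drop out: if $P$ is Gorenstein then this identity holds for every $\rho$-closed $\rho$-inseparable $A$; conversely, given such a positive integer $k$, one takes $\delta = k$ and $\ab = (1, \ldots, 1)$, checks that $\frac{1}{k}(1, \ldots, 1)$ lies in the interior of $P$ (each coordinate equals $1/k > 0$ and $\sum_{i \in A} \frac{1}{k} = |A|/k < (|A|+1)/k = \rho(A)$ for every $\rho$-closed $\rho$-inseparable $A$), and concludes that $\delta P - \ab$ is reflexive, so that $P$ is Gorenstein.

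The step I expect to carry the real weight is the facet description invoked in the first step: one needs both that each coordinate hyperplane $\{x_i = 0\}$ genuinely contributes a facet --- which is exactly where the membership of the origin and of $\eb_1, \ldots, \eb_n$ in $\Dc(G,\mathfrak{c})$ is used --- and that, among the inequalities $\sum_{i \in A} x_i \leq \rho(A)$, precisely the closed-and-inseparable ones are facet-defining with none redundant. After that the argument is the routine bookkeeping of how a dilation followed by a translation transforms supporting hyperplanes, combined with the reflexivity criterion, and no further case analysis beyond the two families of facets is required.
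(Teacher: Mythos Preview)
The paper does not supply its own proof of this lemma: it is stated with a citation to \cite[Theorem~7.3]{HH_discrete} and then used as a black box throughout. Your sketch is correct and is essentially the argument behind that cited theorem, specialised to the polymatroid $\Dc(G,\mathfrak{c})$: you identify the facets of the independence polytope via the Edmonds-type description (coordinate hyperplanes together with $\sum_{i\in A}x_i=\rho(A)$ for $\rho$-closed $\rho$-inseparable $A$), then read off the Gorenstein condition as the existence of $\delta$ and $\ab$ making every facet of $\delta P-\ab$ sit at lattice distance~$1$ from the origin, which forces $\ab=(1,\dots,1)$ and $\delta\rho(A)=|A|+1$. The only place where one should be careful is the assertion that every $\{x_i=0\}$ really is a facet; you handle this correctly by invoking the fact, recorded in the paper, that the origin and all $\eb_j$ lie in $\Dc(G,\mathfrak{c})$, so the intersection with $\{x_i=0\}$ is $(n-1)$-dimensional.
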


After recalling basic materials on finite graphs in Section \ref{sec2}, and on grand set rank functions in Section \ref{sec3}, we classify Gorenstein polytopes of the form $\conv(\Dc(G,\mathfrak{c}))$ arising from complete graphs and cycles (Section \ref{sec4}), complete bipartite graphs (Section \ref{sec5}), paths (Section \ref{sec6}), regular bipartite graphs (Section \ref{sec7}), whiskered graphs (Section \ref{sec8}) and Cohen--Macaulay 
Cameron--Walker graphs (Section \ref{sec9}).  

Let ${\Qc}_n \subset \RR^n$ be the standard unit cube whose vertices are $(\varepsilon_1, \ldots, \varepsilon_n)$ with each $\varepsilon_i \in \{0,1\}$ and $\Qc_n' := 2 \Qc_n - (1, \ldots, 1) \subset \RR^n$, whose vertices are $(\pm 1, \ldots, \pm 1) \in \RR^n$.  Since $\Qc'_n$ is reflexive, both ${\Qc}_n$ and $\Qc_n'+(1,\ldots,1)$ are Gorenstein.  In addition to ${\Qc}_n$ and $\Qc_n'+(1,\ldots,1)$, several Gorenstein polytopes of the form $\conv(\Dc(G,\mathfrak{c}))$ arise.  See Examples \ref{ham}, \ref{FGHIJ} and \ref{pathBBBBB}.  A Gorenstein polytope of the form $\conv(\Dc(G,\mathfrak{c}))$ which is neither ${\Qc}_n$ nor $\Qc_n'+(1,\ldots,1)$ is called {\em exceptional Gorenstein polytope}.  To calssify all exceptional Gorenstein polytopes is reserved for our forthcoming study.

\section{Finite graphs} \label{sec2}
Let $n \geq 3$ and $G$ a finite graph on the vertex set $V(G)=\{x_1, \ldots, x_n\}$ with no loop, no multiple edge and no isolated vertex.  Let $E(G)$ be the set of edges of $G$. 

We say that two vertices $x_i, x_j \in V(G)$ are {\em adjacent} in $G$ if $\{x_i,x_j\} \in E(G)$.  In addition, $x_j$ is called a {\em neighbor} of $x_i$. The set of neighbors of $x_i$ is denoted by $N_G(x_i)$. The cardinality of $N_G(x_i)$ is the {\em degree} of $x_i$, denoted by ${\deg}_G(x_i)$. We say that $e \in E(G)$ is {\em incident} to $x_i \in V(G)$ if $x_i \in e$. A subgraph $H$ of $G$ is called an {\em induced subgraph} if for any $x_i, x_j\in V(H)$, one has $\{x_i, x_j\}\in E(H)$ if and only if $\{x_i, x_j\}\in E(G)$. A subgraph $H$ of $G$ is called a {\em spanning subgraph} if $V(H)=V(G)$. A subset $A \subset V(G)$ is called {\em independent} if $\{x_i, x_j\} \not\in E(G)$ for all $x_i, x_j \in A$ with $i \neq j$.
 
The {\em complete graph} $K_n$ is the finite graph on $[n]$ whose edges are those $\{x_i,x_j\}$ with $1 \leq i < j \leq n$.  

The {\em complete bipartite graph} $K_{n,m}$ is the finite graph on $$\{x_1, \ldots,x_n\}\sqcup\{x_{n+1}, \ldots, x_{n+m}\}$$ whose edges are those $\{x_i,x_j\}$ with $1 \leq i \leq n$ and $n+1 \leq j \leq n+m$.

A {\em matching} of $G$ is a subset $M \subset E(G)$ for which $e \cap e' = \emptyset$ for $e, e' \in M$ with $e \neq e'$. The size of the largest matching of $G$ is called the {\em matching number} of $G$, denoted by ${\rm match}(G)$. A {\em perfect matching} of $G$ is a matching $M$ of $G$ with $\cup_{e \in M}e=V(G)$.  

The {\em cycle} of length $n$ is the finite graph $C_n$ on $\{x_1, \ldots, x_n\}$ whose edges are $$\{x_1,x_2\},\{x_2,x_3\},\ldots, \{x_{n-1},x_n\},\{x_1,x_n\}.$$  

A finite graph $G$ on $n$ vertices is called {\em Hamiltonian} if $G$ contains $C_n$ after a suitable relabeling of the vertices.  

In the polynomial ring $S = K[x_1, \ldots, x_n]$, unless there is a misunderstanding, for an edge $e = \{x_i, x_j\}$, we employ the notation $e$ instead of the monomial $x_ix_j\in S$.  For example, if $e_1 = \{x_1, x_2\}$ and $e_2 = \{x_2, x_5\}$, then $e_1^2e_2 = x_1^2x_2^3x_5$.

\section{Basic facts on ground set rank functions} \label{sec3}
We summarize basic behavior on the ground set rank function of $\conv(\Dc(G,\mathfrak{c}))$.  Let $n \geq 3$ and $G$ a finite graph on $V(G)=\{x_1, \ldots, x_n\}$.  Also, let $\mathfrak{c}=(c_1, \ldots, c_n)\in (\ZZ_{>0})^n$.    

\begin{Lemma} \label{rhomin}
Let $i \in [n]$.  One has
\[
\rho_{(G,\mathfrak{c})}(\{i\})=\min\big\{c_i, \sum_{x_k\in N_G(x_i)}c_k\big\}.
\]
\end{Lemma}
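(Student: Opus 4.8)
The plan is to unwind the definition of $\rho_{(G,\mathfrak{c})}(\{i\})$ as $\max\{a_i : x_1^{a_1}\cdots x_n^{a_n} \in \Bc(G,\mathfrak{c})\}$ and to reconcile this with the structure of $\Bc(G,\mathfrak{c})$, the minimal generators of $(I(G)^{\delta_{\mathfrak{c}}(I(G))})_\mathfrak{c}$. The key observation is that if $u = x_1^{a_1}\cdots x_n^{a_n} \in I(G)^q$ then $u$ is a product of $q$ edges (each edge being a monomial $x_kx_l$ with $\{x_k,x_l\}\in E(G)$), so the exponent $a_i$ of $x_i$ counts, with multiplicity, how many of those $q$ edges are incident to $x_i$. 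Each such incident edge $\{x_i, x_k\}$ contributes one unit to $a_i$ and one unit to $a_k$ for some $x_k \in N_G(x_i)$; hence $a_i \le \sum_{x_k \in N_G(x_i)} a_k \le \sum_{x_k \in N_G(x_i)} c_k$, and of course $a_i \le c_i$ by the bound defining $(I(G)^q)_\mathfrak{c}$. This gives the inequality $\rho_{(G,\mathfrak{c})}(\{i\}) \le \min\{c_i, \sum_{x_k\in N_G(x_i)}c_k\}$.

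For the reverse inequality one has to exhibit, for $q = \delta_{\mathfrak{c}}(I(G))$, a generator $u \in \Bc(G,\mathfrak{c})$ whose $x_i$-exponent attains $m := \min\{c_i, \sum_{x_k\in N_G(x_i)}c_k\}$. First I would construct a monomial $v \in I(G)^q$, with exponent vector bounded by $\mathfrak{c}$, in which the exponent of $x_i$ equals $m$: starting from any minimal generator $w$ of $(I(G)^q)_\mathfrak{c}$, repeatedly swap a factor edge not incident to $x_i$ for an edge incident to $x_i$ (choosing a neighbor $x_k$ whose current exponent is still below $c_k$), which is possible exactly as long as the $x_i$-exponent is below $m$ — the constraint $\sum_{x_k\in N_G(x_i)}c_k$ being precisely the ceiling imposed by the neighbors' bounds, and $c_i$ the direct bound; such swaps keep the monomial in $I(G)^q$ and keep it $\mathfrak{c}$-bounded. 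This produces $v \in (I(G)^q)_\mathfrak{c}$ with $x_i$-exponent $m$; then $v$ is divisible by some $u \in \Bc(G,\mathfrak{c})$, and since $u \mid v$ forces the $x_i$-exponent of $u$ to be at most $m$, I must argue it is exactly $m$ — which follows because dividing out any factor would have to remove an edge incident to $x_i$ only if doing so is forced, but minimality of $u$ lets us insist $v$ itself be taken minimal, i.e. $v = u \in \Bc(G,\mathfrak{c})$.

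The main obstacle is this last point: verifying that the "edge-swap" procedure can be run while both staying inside $I(G)^q$ for the \emph{maximal} $q = \delta_{\mathfrak{c}}(I(G))$ and remaining $\mathfrak{c}$-bounded, i.e. that increasing the $x_i$-exponent does not force some other exponent past its bound in a way that cannot be repaired. The resolution is to do the swaps greedily: when we replace an edge $e$ (disjoint from $x_i$, say $e = \{x_k,x_l\}$) by an edge $\{x_i,x_r\}$, the exponents of $x_k$ and $x_l$ each drop by one, so only $x_i$ and $x_r$ can increase; we pick $x_r \in N_G(x_i)$ with exponent currently $< c_r$, and such $x_r$ exists whenever $a_i < \sum_{x_k\in N_G(x_i)} c_k$ and $a_i < c_i$. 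Thus the process terminates exactly at $a_i = m$, and since the number of edge factors is preserved throughout, the resulting monomial still lies in $I(G)^{q}$, completing the argument. I would also note the two boundary cases $m = c_i$ and $m = \sum_{x_k\in N_G(x_i)}c_k$ separately, as in the latter every neighbor is saturated and the bound is genuinely attained.
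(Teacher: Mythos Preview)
Your upper bound is fine; the gap is in the swap procedure for the lower bound. You assert that whenever the current $x_i$-exponent satisfies $a_i<\sum_{x_k\in N_G(x_i)}c_k$, there is a neighbor $x_r$ of $x_i$ with exponent strictly below $c_r$. This is false: the neighbors' exponents can be filled up by edges that are \emph{not} incident to $x_i$. Concretely, take $G$ the path $x_i\,\text{--}\,x_1\,\text{--}\,x_2$ with $\mathfrak{c}=(c_i,c_1,c_2)=(10,2,10)$; then $\delta_{\mathfrak c}(I(G))=2$ and $w=(x_1x_2)^2\in\Bc(G,\mathfrak c)$ has $a_i=0<2=\sum_{x_k\in N_G(x_i)}c_k$, yet the unique neighbor $x_1$ already has degree $2=c_1$. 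Your swap (remove an edge disjoint from $x_i$, then insert $\{x_i,x_r\}$ for an unsaturated neighbor $x_r$) cannot be carried out.

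The paper closes this gap by splitting into two cases and invoking the maximality of $\delta=\delta_{\mathfrak c}(I(G))$, which your plan never uses. If some neighbor $x_p$ has room, then $(x_ix_p)u\in(I(G)^{\delta+1})_{\mathfrak c}$, contradicting the definition of $\delta$; so in fact every neighbor must be saturated. Then from $\deg_{x_i}(u)<\sum_{x_p\in N_G(x_i)}c_p=\sum_{x_p\in N_G(x_i)}\deg_{x_p}(u)$ one finds an edge $e=\{x_p,x_{p'}\}$ with $x_p\in N_G(x_i)$ and $x_{p'}\neq x_i$, and the correct swap replaces $e$ by $\{x_i,x_p\}$, i.e.\ passes to $ux_i/x_{p'}$. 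This keeps $\deg_{x_p}$ fixed (so the $\mathfrak c$-bound is preserved even though $x_p$ is saturated) and raises $\deg_{x_i}$, contradicting $\deg_{x_i}(u)=\rho_{(G,\mathfrak c)}(\{i\})$. In the path example this turns $(x_1x_2)^2$ into $(x_ix_1)(x_1x_2)$ and then $(x_ix_1)^2$. The missing idea in your plan is precisely the appeal to maximality of $\delta$, which forces all neighbors to be saturated and thereby guarantees that the edge to be swapped out is incident to a neighbor of $x_i$ rather than merely ``disjoint from $x_i$''.
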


\begin{proof}
Clearly one has $\rho_{(G,\mathfrak{c})}(\{i\})\leq \min\big\{c_i, \sum_{x_k\in N_G(x_i)}c_k\big\}$.  Now, assume that $$\rho_{(G,\mathfrak{c})}(\{i\})< \min\big\{c_i, \sum_{x_k\in N_G(x_i)}c_k\big\}.$$  Set $\delta:=\delta_{\mathfrak{c}}(I(G))$. Let $u\in \Bc(G,\mathfrak{c})$ be a monomial with ${\rm deg}_{x_i}(u)=\rho_{(G,\mathfrak{c})}(\{i\})$. Then $u$ can be written as $u=e_1\cdots e_{\delta}$, where $e_1, \ldots, e_{\delta}$ are edges of $G$. If there is a vertex $x_p\in N_G(x_i)$ with ${\rm deg}_{x_p}(u)< c_p$, then $(x_ix_p)u\in (I(G)^{\delta+1})_{\mathfrak{c}}$ which is a contradiction. Thus, for each vertex $x_p\in N_G(x_i)$, one has ${\rm deg}_{x_p}(u)=c_p$. Since
$${\rm deg}_{x_i}(u)=\rho_{(G,\mathfrak{c})}(\{i\})< \sum_{x_p\in N_G(x_i)}c_p=\sum_{x_p\in N_G(x_i)}{\rm deg}_{x_p}(u),$$
in the representation of $u$ as $u=e_1\cdots e_{\delta}$, there is an edge, say $e_1$ which is incident to a vertex $x_p\in N_G(x_i)$ but not to $x_i$. Hence, $e_1=\{x_p,x_{p'}\}$, for some vertex $x_{p'}\neq x_i$. Then
$$\frac{ux_i}{x_{p'}}=(x_ix_p)e_2\cdots e_{\delta}\in \Bc(G,\mathfrak{c}),$$
and$$
\rho_{(G,\mathfrak{c})}(\{i\})\geq {\rm deg}_{x_1}(ux_i/x_{p'})>{\rm deg}_{x_i}(u)=\rho_{(G,\mathfrak{c})}(\{i\}),$$which is a contradiction.
\end{proof}

\begin{Lemma} \label{noinclusion1}
Suppose that $i \in [n]$ enjoys the property that, for each $k \in [n]$ with $\{x_i,x_k\}\notin E(G)$, one has $N_G(x_k)\nsubseteq N_G(x_i)$.  Then the singleton $\{i\}$ is $\rho_{(G,\mathfrak{c})}$-closed (and $\rho_{(G,\mathfrak{c})}$-inseparable).    
\end{Lemma}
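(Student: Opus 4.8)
The plan is to show directly that $\{i\}$ is $\rho_{(G,\mathfrak{c})}$-closed; the inseparability is then automatic since a singleton cannot be written as a disjoint union of two nonempty sets. To prove closedness I must show that $\rho_{(G,\mathfrak{c})}(\{i\}) < \rho_{(G,\mathfrak{c})}(B)$ for every $B \subsetneq [n]$ with $\{i\} \subsetneq B$. By the definition of the ground set rank function and the submodularity/monotonicity inherent in a polymatroid (or just directly from the max-formula), it suffices to verify this for $B = \{i, j\}$ with $j \neq i$, i.e. to show $\rho_{(G,\mathfrak{c})}(\{i,j\}) > \rho_{(G,\mathfrak{c})}(\{i\})$ for every $j$. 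The strategy splits into two cases according to whether $\{x_i, x_j\} \in E(G)$ or not.

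If $\{x_i, x_j\} \in E(G)$, pick a generator $u = e_1 \cdots e_\delta \in \Bc(G,\mathfrak{c})$ with $\deg_{x_i}(u) = \rho_{(G,\mathfrak{c})}(\{i\})$, where $\delta = \delta_{\mathfrak{c}}(I(G))$. If $\deg_{x_i}(u) < c_i$ and $\deg_{x_j}(u) < c_j$ then the edge $\{x_i,x_j\}$ could be multiplied in, contradicting maximality of $\delta$; so one of these exponents is already at its bound. Working as in the proof of Lemma \ref{rhomin}, I would argue that either $\deg_{x_j}(u)$ is already positive (so $\deg_{x_i}(u) + \deg_{x_j}(u) > \deg_{x_i}(u)$ gives the strict inequality at once), or if $\deg_{x_j}(u) = 0$ one swaps an edge of $u$ incident to a neighbor of $x_j$ other than $x_i$ to produce a generator $u'$ with the same $x_i$-exponent but with $\deg_{x_j}(u') \geq 1$; in either case $\rho_{(G,\mathfrak{c})}(\{i,j\}) \geq \deg_{x_i}(u') + \deg_{x_j}(u') > \rho_{(G,\mathfrak{c})}(\{i\})$.

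If $\{x_i, x_j\} \notin E(G)$, the hypothesis supplies a vertex $x_p \in N_G(x_j) \setminus N_G(x_i)$. Take again $u = e_1 \cdots e_\delta$ with $\deg_{x_i}(u) = \rho_{(G,\mathfrak{c})}(\{i\})$. The key point is to manufacture from $u$ a generator $u'$ in which $x_j$ appears while the $x_i$-exponent does not drop. If $\deg_{x_j}(u) > 0$ we are done immediately. Otherwise, if some exponent near $x_p$ can be lowered I replace an edge $\{x_p, x_{p'}\}$ (with $x_{p'} \neq x_i$, which is possible because $x_p \notin N_G(x_i)$) by $\{x_p, x_j\}$, obtaining $u' = (x_jx_p)\cdot(u/x_{p'})$, which still lies in $\Bc(G,\mathfrak{c})$ and has $\deg_{x_i}(u') = \deg_{x_i}(u)$, $\deg_{x_j}(u') \geq 1$. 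The subtle sub-case — and the main obstacle — is when no such edge is available, i.e. when every edge of $u$ incident to a potential "donor" vertex is also incident to $x_i$; here I expect to exploit Lemma \ref{rhomin} together with the inequalities $\deg_{x_i}(u) = \rho_{(G,\mathfrak{c})}(\{i\}) \leq \sum_{x_k \in N_G(x_i)} c_k$ and a counting argument on edges incident to $x_i$ versus to $x_p$, showing the extremal configuration forces $x_p$ (hence $x_j$) to appear after all, or else contradicts maximality of $\delta$. Assembling these cases yields $\rho_{(G,\mathfrak{c})}(\{i,j\}) > \rho_{(G,\mathfrak{c})}(\{i\})$ for all $j$, hence $\{i\}$ is $\rho_{(G,\mathfrak{c})}$-closed and trivially $\rho_{(G,\mathfrak{c})}$-inseparable.
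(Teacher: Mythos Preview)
Your overall architecture is exactly the paper's: reduce closedness to the pairwise inequality $\rho_{(G,\mathfrak{c})}(\{i,j\})>\rho_{(G,\mathfrak{c})}(\{i\})$, pick $u\in\Bc(G,\mathfrak{c})$ with $\deg_{x_i}(u)=\rho_{(G,\mathfrak{c})}(\{i\})$, and produce a generator witnessing the strict increase by an edge swap.  Two points in your execution need sharpening.

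\textbf{Adjacent case.}  Your proposed swap---through a neighbor of $x_j$ \emph{other} than $x_i$---is the wrong move: such a neighbor need not exist, need not appear in $u$, and even if it does, the edge through it may well pass through $x_i$, dropping $\deg_{x_i}$.  The paper's swap is much simpler and uses adjacency directly: assuming $x_j\nmid u$, since $\deg_{x_i}(u)\geq 1$ some factor $e_1=\{x_i,x_p\}$ occurs with $p\neq j$; replace it by the edge $\{x_i,x_j\}$ to get $u'=ux_j/x_p\in\Bc(G,\mathfrak{c})$, which has the same $x_i$-degree and positive $x_j$-degree.  Your preliminary observation about one exponent being at its bound is not needed.

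\textbf{Non-adjacent case.}  Your ``subtle sub-case'' is a phantom.  You already noted that any edge $\{x_p,x_{p'}\}$ in $u$ has $p'\neq i$ because $x_p\notin N_G(x_i)$; so it is impossible for ``every edge of $u$ incident to $x_p$ to also be incident to $x_i$.''  The only genuine remaining case is that $x_p$ does not divide $u$ at all.  But then (since also $x_j\nmid u$) the monomial $(x_jx_p)u$ lies in $(I(G)^{\delta+1})_{\mathfrak{c}}$, contradicting the maximality of $\delta$.  Hence $x_p\mid u$, your swap applies, and no counting argument or appeal to Lemma~\ref{rhomin} is required.
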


\begin{proof}
To prove the assertion, it is enough to prove that for each $j\in [n]$ with $j\neq i$, the inequality $\rho_{(G,\mathfrak{c})}(\{i,j\})>\rho_{(G,\mathfrak{c})}(\{i\})$ holds. Indeed, let $u\in \Bc(G,\mathfrak{c})$ be a monomial with ${\rm deg}_{x_i}(u)=\rho_{(G,\mathfrak{c})}(\{i\})$. If $u$ is divisible by $x_j$, then the inequality $\rho_{(G,\mathfrak{c})}(\{i,j\})>\rho_{(G,\mathfrak{c})}(\{i\})$ trivially holds. So, suppose that $x_j$ does not divide $u$. Set $\delta:=\delta_{\mathfrak{c}}(IG))$. As $u\in (I(G)^{\delta})_{\mathfrak{c}}$, it can be written as $u=e_1\cdots e_{\delta}$, where $e_1, \ldots, e_{\delta}$ are edges of $G$. As $u$ is divisible by $x_i$, we may assume that $e_1=\{x_i,x_p\}$ for some vertex $x_p$ of $G$. Since $u$ is not divisible by $x_j$, we conclude that $p\neq j$. If $x_i$ and $x_j$ are adjacent in $G$, then$$\frac{ux_j}{x_p}=(x_ix_j)e_2\cdots e_{\delta}\in \Bc(G,\mathfrak{c}).$$Consequently,
\begin{align*}
\rho_{(G,\mathfrak{c})}(\{i,j\})& \geq {\rm deg}_{x_i}(ux_j/x_p)+{\rm deg}_{x_j}(ux_j/x_p)>{\rm deg}_{x_i}(ux_j/x_p)\\ &={\rm deg}_{x_i}(u)=\rho_{(G,\mathfrak{c})}(\{i\}).
\end{align*}
So, assume that $x_i$ and $x_j$ are not adjacent in $G$. By assumption, there is a vertex $x_q\in N_G(x_j)\setminus N_G(x_i)$. If $x_q$ does not divide $u$, then $(x_jx_q)u\in (I(G)^{\delta+1})_{\mathfrak{c}}$ which is a contradiction. Therefore, $x_q$ divides $u$. Hence, we  may assume that $e_{\delta}=\{x_q,x_{q'}\}$, for some vertex $x_{q'}$ of $G$. Since $x_q\notin N_G(x_i)$, one has $q'\neq i$. Note that$$\frac{ux_j}{x_{q'}}=e_1e_2\cdots e_{\delta-1}(x_jx_q)\in \Bc(G,\mathfrak{c}).$$Thus,
\begin{align*}
\rho_{(G,\mathfrak{c})}(\{i,j\})& \geq {\rm deg}_{x_i}(ux_j/x_{q'})+{\rm deg}_{x_j}(ux_j/x_{q'})>{\rm deg}_{x_i}(ux_j/x_{q'})\\ &={\rm deg}_{x_i}(u)=\rho_{(G,\mathfrak{c})}(\{i\}).
\end{align*}
Consequently, $\{i\}$ is $\rho_{(G,\mathfrak{c})}$-closed.
\end{proof}

\begin{Lemma}
\label{noinclusion}
Suppose that $G$ is a connected graph with the property that, if $x_i, x_j \in V(G)$ are nonadjacent, then $N_G(x_i)\nsubseteq N_G(x_j)$.  If $\conv(\Dc(G,\mathfrak{c}))$ is Gorenstein, then either $c_1=\cdots =c_n=1$ or $c_1=\cdots =c_n=2$.
\end{Lemma}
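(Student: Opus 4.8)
The plan is to combine the Gorenstein criterion (Lemma \ref{criterion}) with the structural information about singletons provided by Lemmas \ref{rhomin} and \ref{noinclusion1}. First I would observe that the hypothesis on $G$ — connected, and $N_G(x_i)\nsubseteq N_G(x_j)$ whenever $x_i,x_j$ are nonadjacent — is precisely what is needed to invoke Lemma \ref{noinclusion1} for \emph{every} $i\in[n]$: for any $k$ with $\{x_i,x_k\}\notin E(G)$, the condition $N_G(x_k)\nsubseteq N_G(x_i)$ holds by assumption. Hence every singleton $\{i\}$ is $\rho_{(G,\mathfrak{c})}$-closed and $\rho_{(G,\mathfrak{c})}$-inseparable. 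Applying Lemma \ref{criterion}, there is a fixed integer $k>0$ with $\rho_{(G,\mathfrak{c})}(\{i\}) = \tfrac{1}{k}(|\{i\}|+1) = \tfrac{2}{k}$ for all $i\in[n]$. In particular $\rho_{(G,\mathfrak{c})}(\{i\})$ takes the \emph{same} value, call it $r := 2/k$, for every $i$, and since $\rho_{(G,\mathfrak{c})}$ is integer-valued, $k\in\{1,2\}$, so $r\in\{1,2\}$.

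Next I would translate this into a statement about the $c_i$ using Lemma \ref{rhomin}, which gives $\rho_{(G,\mathfrak{c})}(\{i\}) = \min\{c_i,\ \sum_{x_k\in N_G(x_i)}c_k\}$. So for every $i$ we have $\min\{c_i,\ \sum_{x_k\in N_G(x_i)}c_k\} = r$ with $r\in\{1,2\}$. Consider first the case $r=1$. Then for each $i$, either $c_i=1$, or $c_i\geq 2$ but $\sum_{x_k\in N_G(x_i)}c_k=1$; the latter forces $x_i$ to have a single neighbor $x_k$ with $c_k=1$. I would rule this mixed situation out: if some $c_i\geq 2$, pick a neighbor-chain and use connectedness to derive a contradiction. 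Concretely, if $c_i\geq 2$ then $x_i$ has exactly one neighbor $x_j$ and $c_j=1$; but then looking at $\rho_{(G,\mathfrak{c})}(\{j\})=\min\{c_j,\sum_{x_k\in N_G(x_j)}c_k\}=\min\{1,\dots\}=1$ is consistent, so I instead exploit that $N_G(x_i)=\{x_j\}$ makes $x_i$ a leaf, and examine a third vertex. The cleaner route: if $c_i \ge 2$ for some $i$, then since $\sum_{x_k \in N_G(x_i)} c_k = 1$, the vertex $x_i$ has a unique neighbor $x_j$ with $c_j = 1$, and every other vertex is nonadjacent to $x_i$; by connectedness $n\geq 3$ forces some $x_\ell$ adjacent to $x_j$ with $\ell\neq i$, and then $N_G(x_i)=\{x_j\}\subseteq N_G(x_\ell)$ would be permissible only if $x_i, x_\ell$ are adjacent — contradicting that $x_i$'s only neighbor is $x_j$. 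Wait, this needs $x_i,x_\ell$ nonadjacent, which holds, and then the hypothesis demands $N_G(x_i)\nsubseteq N_G(x_\ell)$, i.e. $x_j\notin N_G(x_\ell)$, i.e. $x_j,x_\ell$ nonadjacent — contradicting the choice of $x_\ell$. Hence no $c_i\geq 2$, so $c_1=\cdots=c_n=1$.

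For the case $r=2$, the same scheme applies with the roles shifted: for each $i$, either $c_i=2$, or $c_i\geq 3$ with $\sum_{x_k\in N_G(x_i)}c_k=2$, or $c_i=1$ — but $c_i=1$ gives $\min\{1,\dots\}=1\neq 2$, so $c_i=1$ is impossible and thus $c_i\geq 2$ for all $i$; and if $c_i\geq 3$ then $\sum_{x_k\in N_G(x_i)}c_k=2$ forces $x_i$ to be a leaf with its unique neighbor $x_j$ satisfying $c_j=2$. I would then run the identical connectedness-plus-neighborhood-containment argument as above to exclude $c_i\geq 3$, concluding $c_1=\cdots=c_n=2$. The main obstacle is the case analysis in the "mixed" subcases — showing that a vertex with large $c_i$ (hence forced to be a leaf attached to a low-$c$ vertex) cannot coexist with the neighborhood-noncontainment hypothesis; I expect this to require care about small $n$ and about whether the forced leaf structure is compatible with connectedness, but in each subcase it reduces to producing a vertex $x_\ell\neq x_i$, nonadjacent to $x_i$, whose neighborhood contains that of $x_i$, directly violating the standing hypothesis.
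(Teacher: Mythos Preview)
Your proposal is correct and follows essentially the same route as the paper: use Lemma~\ref{noinclusion1} to see that every singleton is $\rho_{(G,\mathfrak{c})}$-closed and inseparable, apply Lemma~\ref{criterion} to force a common value $\rho_i\in\{1,2\}$, and then use Lemma~\ref{rhomin} together with the observation that the hypothesis (with $n\ge 3$ and connectedness) forbids leaves to conclude $c_i=\rho_i$. The only cosmetic difference is that the paper treats the cases $r=1$ and $r=2$ simultaneously---noting that $c_i>\sum_{x_k\in N_G(x_i)}c_k$ would give $\rho_i\ge \rho_{k_1}+\rho_{k_2}=2\rho_i$ for two distinct neighbors $x_{k_1},x_{k_2}$---whereas you split into cases and derive the leaf explicitly each time; the underlying contradiction is identical.
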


\begin{proof}
It follows from Lemma \ref{noinclusion1} and the assumption that for any $i\in [n]$, the singleton $\{i\}$ is $\rho_{(G,\mathfrak{c})}$-closed (and $\rho_{(G,\mathfrak{c})}$-inseparable).  For each $i\in [n]$, set $\rho_i:=\rho_{(G,\mathfrak{c})}(\{i\}$.  We conclude from Lemma \ref{criterion} that either $\rho_1=\cdots = \rho_n = 1$ or $\rho_1=\cdots = \rho_n = 2$. To complete the proof, we show that $\rho_i=c_i$, for each $i\in [n]$. If $c_i\leq \sum_{x_k\in N_G(x_i)}c_k$, then the assertion follows from Lemma \ref{rhomin}. So, suppose that $c_i> \sum_{x_k\in N_G(x_i)}c_k$. Again using Lemma \ref{rhomin}, we deduce that $\rho_k=c_k$, for each integer $k$ with $x_k\in N_G(x_i)$. Moreover, $\rho_i=\sum_{x_k\in N_G(x_i)}c_k$. Since $G$ is a connected graph on $n\geq 3$ vertices, it follows from the assumption that $x_i$ is not a leaf of $G$. So, there are two distinct vertices $x_{k_1}, x_{k_2}\in N_G(x_i)$.  It follows that$$\rho_i=\sum_{x_k\in N_G(x_i)}c_k\geq c_{k_1}+c_{k_2}=\rho_{k_1}+\rho_{k_2}.$$This is a contradiction, as $\rho_1=\cdots =\rho_n$.
\end{proof}

\section{Complete graphs and cycles} \label{sec4}
In this section, a few examples of Gorenstein polytopes of the form $\conv(\Dc(G,\mathfrak{c}))$ are given and the Gorenstein polytopes arising from complete graphs are classified.  

Let ${\Qc}_n \subset \RR^n$ be the standard unit cube  whose vertices are $(\varepsilon_1, \ldots, \varepsilon_n)$ with each $\varepsilon_i \in \{0,1\}$.  Since the cube $\Qc_n' := 2 \Qc_n - (1, \ldots, 1) \subset \RR^n$, whose vertices are $(\pm 1, \ldots, \pm 1) \in \RR^n$, is reflexive, it follows that $\Qc_n$ is Gorenstein.

\begin{Example}
    \label{perfectmatching}
Let $n \geq 4$ be even and $G$ a finite graph on $V(G)=\{x_1, \ldots, x_n\}$ for which $G$ has a perfect matching.  Let $\mathfrak{c}=(1, \ldots, 1)\in (\ZZ_{>0})^n$.  One has $\delta_{\mathfrak{c}}(I(G))=n/2$ and $\Bc(G,\mathfrak{c}) = \{x_1\cdots x_n\}$.  Since $\rho_{(G,\mathfrak{c})}(X) = |X|$ for $X \subset [n]$, it follows that $X \subset [n]$ is $\rho_{(G,\mathfrak{c})}$-closed and $\rho_{(G,\mathfrak{c})}$-inseparable if and only if $|X| = 1$.  Hence $\conv(\Dc(G,\mathfrak{c}))$ is Gorenstein (Lemma \ref{criterion}).  More precisely, one has $\conv(\Dc(G,\mathfrak{c})) = \Qc_n$.
\end{Example}

If $n \geq 3$ is an odd integer, then the standard unit cube ${\Qc}_n \subset \RR^n$ cannot be of the form $\conv(\Dc(G,\mathfrak{c}))$.  In fact, if $G$ is a finite graph on $V(G)=\{x_1, \ldots, x_n\}$ and ${\Qc}_n = \conv(\Dc(G,\mathfrak{c}))$, then $x_1\cdots x_n \in \Bc(G,\mathfrak{c})$, which is impossible, since the degree of each monomial belonging to $\Bc(G,\mathfrak{c})$ is even.

\begin{Example} \label{ham}
Let $n \geq 3$ and $\mathfrak{c}=(1, \ldots, 1)\in (\ZZ_{>0})^n$.  Let $G$ be a Hamiltonian graph on $V(G)=\{x_1, \ldots, x_n\}$. If $n$ is even, then $G$ has a perfect matching and $\conv(\Dc(G,\mathfrak{c})) = \Qc_n$.

Let $n$ be odd.  One has $\delta_{\mathfrak{c}}(I(G))=(n-1)/2$ and $\Bc(G,\mathfrak{c}) = \{u/x_1, \ldots, u/x_n\}$, where $u=x_1 \cdots x_n$.  One has $\rho_{(G,\mathfrak{c})}([n]) = n-1$ and $\rho_{(G,\mathfrak{c})}(X) = |X|$ for $X \subsetneq [n]$.  Thus $X \subset [n]$ is $\rho_{(G,\mathfrak{c})}$-closed and $\rho_{(G,\mathfrak{c})}$-inseparable if and only if either $|X| = 1$ or $X=[n]$.  It then follows from Lemma \ref{criterion} that $\conv(\Dc(G,\mathfrak{c}))$ is Gorenstein if and only if $n=3$.  When $n=3$, $\conv(\Dc(G,\mathfrak{c})) \subset \RR^3$ is the Gorenstein polytope $\Pc_3 \subset \RR^3$ which is defined by the system of linear inequalities $0 \leq x_i \leq 1$ for $1 \leq i \leq 3$ together with $x_1+x_2+x_3 \leq 2$.
\end{Example}

\begin{Example}
    \label{2...2}
Let $n \geq 3$ and $G$ a finite graph on $V(G)=\{x_1, \ldots, x_n\}$ for which either $G$ has a pefect matching or $G$ is Hamiltonian.  Let $\mathfrak{c} =(2,\ldots,2) \in (\ZZ_{>0})^n$.  One has $\delta_{\mathfrak{c}}(I(G))=n$ and $\Bc(G,\mathfrak{c}) = \{x_1^2\cdots x_n^2\}$.  Thus  $\conv(\Dc(G,\mathfrak{c})) = {\Qc}'_n + (1,\ldots, 1)$, which is Gorenstein.
\end{Example}

\begin{Example}
    \label{cycle}
Let $n \geq 3$ and $G=C_n$ the cycle of length $n$ on $V(G)=\{x_1, \ldots, x_n\}$.  Let $\mathfrak{c} \in (\ZZ_{>0})^n$ and suppose that $\conv(\Dc(C_n,\mathfrak{c}))$ is Gorenstein.  Then either $c_1=\cdots =c_n=1$ or $c_1=\cdots =c_n=2$ (Lemma \ref{noinclusion}).  Let $c_1=\cdots =c_n=2$.  Since $C_n$ is Hamiltonian, one has $\conv(\Dc(G,\mathfrak{c})) = {\Qc}'_n + (1,\ldots, 1)$ (Example \ref{2...2}).

Let $c_1=\cdots =c_n=1$.  If $n$ is even, then $G$ has a perfect matching and $\conv(\Dc(G,\mathfrak{c})) = \Qc_n$ (Example \ref{perfectmatching}).  Let $n$ be odd.  Since $C_n$ is Hamiltonian, it follows that $\conv(\Dc(C_n,\mathfrak{c}))$ is Gorenstein if and only if $n=3$ (Example \ref{ham}).
\end{Example}

We now come to the classification of Gorenstein polytopes arising from complete graphs.
      
\begin{Theorem}
\label{complete}
Let $n \geq 3$ and $K_n$ the complete graph on $V(G)=\{x_1, \ldots, x_n\}$.  The Gorenstein polytopes of the form $\conv(\Dc(K_n,\mathfrak{c}))$, are exactly 
\begin{itemize}
    \item [(i)] ${\Qc}'_n + (1,\ldots, 1)$,
    \item [(ii)] $\Qc_n$ with $n$ even, and
    \item[(iii)] $\Pc_3$ of Example \ref{ham}.
\end{itemize}
\end{Theorem}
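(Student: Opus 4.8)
The strategy is to reduce the classification entirely to Lemma \ref{noinclusion} together with the examples already recorded in this section. The key observation is that the complete graph $K_n$ is connected and, since every pair of its vertices is adjacent, it vacuously satisfies the non-inclusion hypothesis of Lemma \ref{noinclusion}. Hence, whenever $\conv(\Dc(K_n,\mathfrak{c}))$ is Gorenstein, one must have $c_1=\cdots=c_n=1$ or $c_1=\cdots=c_n=2$. A second elementary fact to record is that $K_n$ is Hamiltonian for every $n\geq 3$, and that $K_n$ has a perfect matching whenever $n$ is even; both are immediate.

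With these two reductions in hand, the proof splits into three cases. If $\mathfrak{c}=(2,\ldots,2)$, then, because $K_n$ is Hamiltonian, Example \ref{2...2} gives $\conv(\Dc(K_n,\mathfrak{c}))={\Qc}'_n+(1,\ldots,1)$, which is case (i) and is always Gorenstein. If $\mathfrak{c}=(1,\ldots,1)$ and $n$ is even, then $K_n$ has a perfect matching, so Example \ref{perfectmatching} gives $\conv(\Dc(K_n,\mathfrak{c}))=\Qc_n$, which is case (ii). If $\mathfrak{c}=(1,\ldots,1)$ and $n$ is odd, then, again using that $K_n$ is Hamiltonian, Example \ref{ham} shows that $\conv(\Dc(K_n,\mathfrak{c}))$ is Gorenstein precisely when $n=3$, in which case it equals $\Pc_3$; this is case (iii), and for odd $n\geq 5$ no Gorenstein polytope arises.

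Finally I would verify the converse direction, i.e.\ that each of (i), (ii), (iii) genuinely occurs: polytope (i) is realized for every $n\geq 3$ by taking $\mathfrak{c}=(2,\ldots,2)$; polytope (ii) is realized for every even $n$ by taking $\mathfrak{c}=(1,\ldots,1)$; and polytope (iii) is realized by $n=3$, $\mathfrak{c}=(1,1,1)$. Combining the three cases above with this realization completes the proof.

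I do not anticipate a serious obstacle: the substantive content is packaged in Lemma \ref{noinclusion} and in Examples \ref{perfectmatching}, \ref{ham}, and \ref{2...2}. The only points requiring slight care are to notice that the hypothesis of Lemma \ref{noinclusion} is satisfied vacuously by $K_n$, so that the reduction to $\mathfrak{c}\in\{(1,\ldots,1),(2,\ldots,2)\}$ is legitimate, and to confirm that for odd $n\geq 5$ the choice $\mathfrak{c}=(1,\ldots,1)$ does not produce a Gorenstein polytope, which is exactly the content of Example \ref{ham}.
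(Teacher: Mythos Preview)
Your proposal is correct and follows essentially the same approach as the paper: reduce via Lemma \ref{noinclusion} to $\mathfrak{c}\in\{(1,\ldots,1),(2,\ldots,2)\}$, then invoke Examples \ref{2...2} and \ref{ham} to handle each case. Your write-up is in fact slightly more explicit than the paper's, separating the even and odd subcases for $\mathfrak{c}=(1,\ldots,1)$ and spelling out the converse realization, but the substance is identical.
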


\begin{proof}
Suppose that $\conv(\Dc(K_n,\mathfrak{c}))$ is Gorenstein.  One has either $c_1=\cdots =c_n=1$ or $c_1=\cdots =c_n=2$ (Lemma \ref{noinclusion}).  Let $c_1=\cdots =c_n=2$.  Then $\conv(\Dc(K_n,\mathfrak{c})) = {\Qc}'_n + (1,\ldots, 1)$ (Example \ref{2...2}).  Let $c_1=\cdots =c_n=1$.  It follows that $\conv(\Dc(K_n,\mathfrak{c}))$ is Gorenstein if and only if either $n$ is even or $n=3$ (Example \ref{ham}).  
\end{proof}

\section{Complete bipartite graphs} \label{sec5}
Let $m\geq 1, n\geq 1$ be integers with $n+m \geq 3$ and $K_{m,n}$ the complete bipartite graph on the vertex set $\{x_1, \ldots, x_m\} \sqcup \{x_{m+1}, \ldots, x_{m+n}\}$.  Let $\mathfrak{c}=(c_1, \ldots, c_{m+n})\in (\ZZ_{>0})^{m+n}$.

\begin{Example}
\label{ABCDE}
Suppose that $c_1+\cdots +c_m=c_{m+1}+\cdots +c_{m+n}$.   One has $\Bc(K_{m,n},\mathfrak{c}) = \{x_1^{c_1}x_2^{c_2} \cdots x_{m+n}^{c_{m+n}} \}$ and $\rho_{(K_{m,n},\mathfrak{c})}(X) = \sum_{i \in X} c_i$ for $X \subset [m+n]$.  It follows that $X \subset [n]$ is $\rho_{(K_{K_{m,n}},\mathfrak{c})}$-closed and $\rho_{(K_{m,n},\mathfrak{c})}$-inseparable if and only if $|X| = 1$.  Hence, $\conv(\Dc(K_{m,n},\mathfrak{c}))$ is Gorenstein if and only if either $c_1=\cdots = c_{m+n} = 1$ or $c_1=\cdots = c_{m+n} = 2$ (Lemma \ref{criterion}).  In particular, if $\conv(\Dc(K_{m,n},\mathfrak{c}))$ is Gorenstein, then $m=n$.  As a result, we obtain the Gorenstein polytopes $\Qc_{2n} \subset \RR^{2n}$ and $\Qc'_{2n}+(1,\ldots, 1) \subset \RR^{2n}$. 
\end{Example}

\begin{Example}
\label{FGHIJ}
(a) Let $n = 2m - 1$ with $m \geq 2$ and fix a subset $A$ of $[m+n] \setminus [m]$, possibly $A = \emptyset$ or $A = [m+n] \setminus [m]$.  Let $\mathfrak{c}=(c_1,\ldots,c_{m+n})\in (\ZZ_{>0})^{m+n}$, where $c_i = 1$ if $i \in [m+n] \setminus A$ and where $c_i = m$ if $i \in A$.  Then $\Bc(K_{m,n},\mathfrak{c})$ consists of those monomials $x_{1}\cdots x_{m}u$, where $u$ is a  monomial in $x_{m+1}, \ldots, x_{m+n}$ of degree $m$ bounded by $(c_{m+1}, \ldots, c_{m+n})$.
If either $X \cap A \neq \emptyset$ or $X=[m+n]\setminus [m]$, then $\rho_{(K_{K_{m,n}},\mathfrak{c})}(X) = m$.  It follows that $\rho_{(K_{K_{m,n}},\mathfrak{c})}$-closed and $\rho_{(K_{m,n},\mathfrak{c})}$-inseparable subsets of $[m+n]$ are the singleton $\{i\}$ for $i \in [m+n] \setminus A$ together with $[m+n]\setminus [m]$.  Since $\rho_{(K_{K_{m,n}},\mathfrak{c})}([m+n]\setminus [m])=m = (n + 1)/2$, it follows from Lemma \ref{criterion} that $\conv(\Dc(K_{m,n},\mathfrak{c}))$ is Gorenstein.  More precisely, $\conv(\Dc(K_{m,n},\mathfrak{c}))$ is defined by the linear inequalities $0 \leq x_i$ for $i \in [m+n]$, $x_i \leq 1$ for $i \notin A$ together with $$x_{m+1}+\cdots+x_{m+n} \leq m.$$

(b) Let $n = 2m - 1$ with $m \geq 2$ and fix a subset $A$ of $[m+n] \setminus [m]$, possibly $A = \emptyset$ or $A = [m+n] \setminus [m]$.  Let $\mathfrak{c}=(c_1,\ldots,c_{m+n})\in (\ZZ_{>0})^{m+n}$, where $c_i = 2$ if $i \in [m+n] \setminus A$ and where $c_i = 2m$ if $i \in A$.  A similar argument as in (a) shows that $\conv(\Dc(K_{m,n},\mathfrak{c}))$ is Gorenstein.  More precisely, $\conv(\Dc(K_{m,n},\mathfrak{c}))$ is defined by the linear inequalities $0 \leq x_i$ for $i \in [m+n]$, $x_i \leq 2$ for $i \notin A$ together with $$x_{m+1}+\cdots+x_{m+n} \leq 2m.$$ 
\end{Example}

We now come to the classification of Gorenstein polytopes arising from complete
bipartite graphs.

\begin{Theorem}
    \label{completebipartite}
Let $m\geq 1, n\geq 1$ be integers with $n+m \geq 3$ and $K_{m,n}$ the complete bipartite graph on the vertex set $\{x_1, \ldots, x_m\} \sqcup \{x_{m+1}, \ldots, x_{m+n}\}$.  The Gorenstein polytopes of the form $\conv(\Dc(K_{m,n},\mathfrak{c}))$ are those of Examples \ref{ABCDE} and \ref{FGHIJ}   
\end{Theorem}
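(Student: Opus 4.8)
The plan is to show that any Gorenstein polytope $\conv(\Dc(K_{m,n},\mathfrak{c}))$ must, after possibly permuting the vertices within each part, fall into one of the two families above. I begin by analyzing the ground set rank function of $K_{m,n}$. Write $[m]=\{1,\dots,m\}$ for one part and $J=\{m+1,\dots,m+n\}$ for the other, and set $C_1=\sum_{i\in[m]}c_i$, $C_2=\sum_{j\in J}c_j$. First I compute $\delta_{\mathfrak{c}}(I(K_{m,n}))$: since every monomial in $I(K_{m,n})^q$ splits as a product of $q$ edges, each edge consuming one unit of degree from each part, the bound $a_i\le c_i$ forces $\delta_{\mathfrak c}=\min\{C_1,C_2\}$, realized by a monomial that saturates the smaller part. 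WLOG $C_1\le C_2$; then every $u\in\Bc(K_{m,n},\mathfrak c)$ has $\deg_{x_i}(u)=c_i$ for all $i\in[m]$, and on $J$ the exponents form an arbitrary vector $\le(c_{m+1},\dots,c_{m+n})$ of total degree $C_1$. From this I read off $\rho_{(K_{m,n},\mathfrak c)}$: for $X\subseteq[m]$ one has $\rho(X)=\sum_{i\in X}c_i$; for $X\subseteq J$ one has $\rho(X)=\min\{\sum_{j\in X}c_j,\;C_1\}$; and in general $\rho(X)=\rho(X\cap[m])+\min\{\sum_{j\in X\cap J}c_j,\;C_1-\rho(X\cap[m])\}$ when this is nonnegative, i.e. $\rho(X)=\min\{\sum_{i\in X}c_i,\;C_1-\sum_{i\in[m]\setminus X}c_i+\sum_{j\in X\cap J}c_j\}$. (This last formula needs a short exchange argument, pushing degree between the parts.)

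Next I identify the $\rho$-closed and $\rho$-inseparable subsets. Using Lemma~\ref{noinclusion1}: a vertex $x_i$ with $i\in[m]$ is adjacent to every vertex of $J$ and nonadjacent only to other vertices of $[m]$, whose neighborhoods all equal $J$, so the hypothesis of Lemma~\ref{noinclusion1} fails exactly when $m\ge2$; hence for $i\in[m]$ the singleton $\{i\}$ is $\rho$-closed iff $m=1$ or (more usefully) we must check closedness directly. The cleaner route: a singleton $\{i\}$, $i\in[m]$, is $\rho$-closed iff $c_i<c_{i'}$ is impossible to exploit — precisely, $\{i\}$ is $\rho$-closed iff adding any $j$ strictly increases $\rho$, which for $j\in J$ always holds (since $C_1\le C_2$ gives slack) and for $i'\in[m]$, $i'\ne i$, fails iff... here the relevant point is that $\{i\}\cup\{i'\}$ has rank $c_i+c_{i'}>c_i$ always, so $\{i\}$ IS $\rho$-closed for every $i\in[m]$ — and it is inseparable. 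For $i\in J$ with $c_i<C_1$, $\{i\}$ is $\rho$-closed exactly when for every $j\in J\setminus\{i\}$ we have $\rho(\{i,j\})>c_i$, i.e. $c_i+c_j\le C_1$ or there is slack; one checks $\{i\}$ fails to be closed precisely when $\sum_{j\in J}c_j=C_1=C_2$, forcing $J$ itself into the list. The outcome I expect: the $\rho$-closed $\rho$-inseparable sets are the singletons $\{i\}$ with $c_i<C_1$ (i.e. $i\in[m]$ always, and $i\in J$ when $c_i<C_1$), together with $J$ itself when $C_1=C_2$ is \emph{not} forced but $\sum_{j\in J}c_j>C_1$, i.e. always when $C_1<C_2$ — and possibly the whole part $[m]$ in the balanced case.

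Now I apply the Gorenstein criterion, Lemma~\ref{criterion}: there must exist $k>0$ with $\rho(A)=(|A|+1)/k$ for every closed inseparable $A$. Applied to the singletons $\{i\}$ this forces $c_i=2/k$ for all such $i$, so $k\in\{1,2\}$ and all "free" coordinates $c_i$ equal a common value $c\in\{1,2\}$. Applied to $J$ (when it appears) it forces $C_1=\rho(J)=(n+1)/k$, i.e. $C_1=(n+1)c/2$; combined with $C_1=\sum_{i\in[m]}c_i=mc$ this gives $n=2m-1$, and with the coordinates on $A:=\{j\in J:c_j\ge C_1\}$ being the "non-free" ones equal to $C_1=mc$, we land exactly in Example~\ref{FGHIJ}(a) when $c=1$ and (b) when $c=2$. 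In the remaining case, where no set from $J$ is closed-inseparable, a short argument shows $\sum_{j\in J}c_j=C_1$, i.e. $C_1=C_2$, every coordinate is free hence equals $c\in\{1,2\}$, and then $m=n$ with $\mathfrak c=(c,\dots,c)$ — this is Example~\ref{ABCDE}. I also have to dispose of the balanced subcase where $[m]$ or $J$ is itself closed-inseparable: there $\rho([m])=C_1=(m+1)/k$ together with $c_i=2/k$ gives $mc=(m+1)c/2$, impossible for $m\ge1$ unless one re-examines — so this subcase cannot produce a Gorenstein polytope beyond those already listed, and in fact forces $m=1$, which is consistent with Example~\ref{ABCDE} at $n=m=1$ being excluded by $n+m\ge3$.

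The main obstacle I anticipate is the bookkeeping in the case analysis of which subsets of the "heavy" side $J$ are $\rho$-closed and $\rho$-inseparable — in particular handling mixed subsets $X$ meeting both $[m]$ and $J$, and correctly pinning down exactly when $J$ (or a proper subset of $J$ containing all heavy vertices) is inseparable versus separable. Getting the inseparability condition right is delicate because $\rho$ is a truncation of a modular function, and truncations create many additive decompositions; I expect to need the explicit piecewise-linear formula for $\rho$ from the first paragraph and a careful check that the only inseparable "large" set is $J$ itself, occurring precisely in the unbalanced regime $C_1<C_2=\sum_{j\in J}c_j$. Once the list of closed inseparable sets is nailed down, the Gorenstein criterion is a direct linear computation yielding $c\in\{1,2\}$ and $n=2m-1$ (or $m=n$), matching Examples~\ref{ABCDE} and~\ref{FGHIJ} exactly.
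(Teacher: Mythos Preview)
Your approach is essentially the paper's: identify enough $\rho$-closed $\rho$-inseparable sets (the singletons and the part $J$), then let Lemma~\ref{criterion} force $k\in\{1,2\}$, $c_i\in\{1,2\}$ on the ``free'' coordinates, and $n=2m-1$ in the unbalanced case. However, several details in your sketch are off and would derail the argument as written.

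First, your closedness analysis for singletons $\{i\}$ with $i\in J$ is wrong. With $C_1\le C_2$ one has $\rho(\{i\})=\min\{c_i,C_1\}$, and $\{i\}$ is $\rho$-closed precisely when $c_i<C_1$: if $c_i<C_1$ then adding any vertex (from either part) strictly increases $\rho$, while if $c_i\ge C_1$ then adding any $j\in J$ leaves $\rho$ at $C_1$. Your claimed criterion ``$\{i\}$ fails to be closed precisely when $\sum_{j\in J}c_j=C_1=C_2$'' is not the right condition. The corrected statement is exactly what the paper uses in Cases~1 and~2: if $\rho_\ell$ lies strictly between $c$ and $C_1$, then $\{\ell\}$ is closed and inseparable, contradicting the criterion; hence $\rho_\ell\in\{c,\,C_1\}$ for every $\ell\in J$.

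Second, your treatment of the balanced case $C_1=C_2$ is confused. Here $\Bc(K_{m,n},\mathfrak c)=\{x_1^{c_1}\cdots x_{m+n}^{c_{m+n}}\}$, so $\rho(X)=\sum_{i\in X}c_i$ is modular; in particular $[m]$ and $J$ are \emph{separable}, not inseparable, and the only closed-inseparable sets are the singletons. There is nothing to ``dispose of'': the criterion on singletons alone forces all $c_i$ equal to $1$ or all equal to $2$, hence $m=n$, which is Example~\ref{ABCDE}.

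Third, your worry about mixed subsets meeting both parts, and about enumerating \emph{all} closed-inseparable sets, is unnecessary. Lemma~\ref{criterion} is an if-and-only-if, so for the forward direction you only need to exhibit \emph{some} closed-inseparable sets whose constraints already pin $\mathfrak c$ down to the Examples; the Examples themselves supply the converse. The paper does exactly this, using only the singletons $\{i\}$ for $i\in[m]$, the set $J$ (shown inseparable via the one-line estimate $\rho(A_1)+\rho(A_2)>C_1$ when $C_1<C_2$), and then the singletons in $J$ with $\rho_\ell<C_1$.
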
 

\begin{proof}
If $c_1+\cdots +c_m=c_{m+1}+\cdots +c_{m+n}$, then  $\conv(\Dc(K_{m,n},\mathfrak{c}))$ is one of the polytopes presented in Example \ref{ABCDE}.  Suppose that $c_1+\cdots +c_m\neq c_{m+1}+\cdots +c_{m+n}$.  Let, say, $c_1+\cdots +c_m < c_{m+1}+\cdots +c_{m+n}$. Note that for a monomial $u\in S$, one has $u\in \Bc(K_{m,n},\mathfrak{c})$ if and only if $u$ can be written as $x_1^{c_1}\cdots x_m^{c_m}u_1$, where $u_1$ is a $(c_{m+1}, \ldots, c_{m+n})$-bounded monomial of degree $c_1+\cdots +c_m$ on variables $x_{m+1}, \ldots, x_{m+n}$.  For each $i=1, \ldots, m$, the singleton $\{i\}$ is a $\rho_{(K_{K_{m,n}},\mathfrak{c})}$-closed and $\rho_{(K_{m,n},\mathfrak{c})}$-inseparable subset of $[m+n]$ with $\rho_{(K_{K_{m,n}},\mathfrak{c})}(\{i\})=c_i$. It is clear that the set $\{m+1, \ldots, m+n\}$ is a $\rho_{(K_{K_{m,n}},\mathfrak{c})}$-closed subset of $[m+n]$ with $$\rho_{(K_{K_{m,n}},\mathfrak{c})}(\{m+1, \ldots, m+n\})=c_1+\cdots +c_m.$$We show that this set is $\rho_{(K_{m,n},\mathfrak{c})}$-inseparable.  Suppose that $A_1$ and $A_2$ are proper subsets of $\{m+1, \ldots, m+n\}$ with $A_1\cap A_2=\emptyset$ and $A_1\cup A_2=\{m+1, \ldots, m+n\}$. Then for $j=1,2$, one has$$\rho_{(K_{m,n},\mathfrak{c})}(A_j)=\min\big\{c_1+ \cdots + c_m, \sum_{k\in A_j}c_k\big\}.$$Since $c_1+\cdots +c_m < c_{m+1}+\cdots +c_{m+n}$, the above equality implies that$$\rho_{(K_{m,n},\mathfrak{c})}(A_1)+\rho_{(K_{m,n},\mathfrak{c})}(A_2)> c_1+\cdots + c_m= \rho_{(K_{m,n},\mathfrak{c})}(\{1, \ldots, m\}).$$Therefore, $\{m+1, \ldots, m+n\}$ is a $\rho_{(K_{m,n},\mathfrak{c})}$-inseparable subset of $[m+n]$.

Now, by Lemma \ref{criterion}, there is an integer $k\geq 1$ such that for any $\rho_{(K_{m,n},\mathfrak{c})}$-closed and $\rho_{(K_{m,n},\mathfrak{c})}$-inseparable subsets $X \subset [m+n]$,
\begin{eqnarray}
\label{formula}
\rho_{(K_{m,n},\mathfrak{c})}(X)=\frac{1}{\, k \,}(|X|+1).
\end{eqnarray}
For each integer $i\in [m+n]$, set $\rho_i=\rho_{(K_{m,n},\mathfrak{c})}(\{i\})$. In particular, $\rho_i=c_i$, for each $i\in [m]$. In the preceding paragraph, we showed that the singletons $\{1\}, \ldots, \{m\}$ are $\rho_{(K_{m,n},\mathfrak{c})}$-closed and $\rho_{(K_{m,n},\mathfrak{c})}$-inseparable. So, the above equality implies that either $k=2$ and $\rho_1=\cdots =\rho_m=1$, or $k=1$ and $\rho_1=\cdots =\rho_m=2$. Therefore, one has the following two cases.

\medskip

{\bf Case 1.} Assume that $k=2$ and $\rho_1=\cdots =\rho_m=1$. Since $\{m+1, \ldots, m+n\}$ is a $\rho_{(K_{m,n},\mathfrak{c})}$-closed and $\rho_{(K_{m,n},\mathfrak{c})}$-inseparable subset of $[m+n]$ with$$\rho_{(K_{m,n},\mathfrak{c})}(\{m+1, \ldots, m+n\})=c_1+\cdots +c_m=m,$$we deduce from equality (\ref{formula}) that $n=2m-1$. Since $\rho_1+\cdots +\rho_m=m$, one has $\rho_{\ell}\leq m$, for each $\ell\in [m+n]\setminus [m]$. If $2\leq \rho_{\ell}\leq m-1$ for some integer $\ell$ with $m+1\leq \ell\leq m+n$, then the singleton $\{\ell\}$ is a $\rho_{(K_{m,n},\mathfrak{c})}$-closed and $\rho_{(K_{m,n},\mathfrak{c})}$-inseparable subset of $[m+n]$ with $\rho_{(K_{m,n},\mathfrak{c})}(\{\ell\})=\rho_{\ell}\geq 2$. This contradicts  (\ref{formula}). Thus, for each $\ell\in [m+n]\setminus [m]$, one has either $\rho_{\ell}=1$ or $\rho_{\ell}=m$. This yields that $\conv(\Dc(K_{m,n},\mathfrak{c}))$ is one of the polytopes presented in Example \ref{FGHIJ} (a). 

\medskip

{\bf Case 2.} Assume that $k=1$ and $\rho_1=\cdots =\rho_m=2$. Recall that for each $i\in [m]$, one has $\rho_i=c_i$. Since $\{m+1, \ldots, m+n\}$ is a $\rho_{(K_{m,n},\mathfrak{c})}$-closed and $\rho_{(K_{m,n},\mathfrak{c})}$-inseparable subset of $[m+n]$ with$$\rho_{(K_{m,n},\mathfrak{c})}(\{m+1, \ldots, m+n\})=c_1+\cdots +c_m=2m,$$we deduce from equality (\ref{formula}) that $n=2m-1$. Since $\rho_1+\cdots +\rho_m=2m$, one has $\rho_{\ell}\leq 2m$, for each $\ell\in [m+n]\setminus [m]$. If $1\leq \rho_{\ell}\leq 2m-1$ for some integer $\ell$ with $m+1\leq \ell\leq m+n$, then the singleton $\{\ell\}$ is a $\rho_{(K_{m,n},\mathfrak{c})}$-closed and $\rho_{(K_{m,n},\mathfrak{c})}$-inseparable subset of $[m+n]$ with $\rho_{(K_{m,n},\mathfrak{c})}(\{\ell\})=\rho_{\ell}$. Hence, equality (\ref{formula}) implies that $\rho_{\ell}=2$. Consequently, for each $\ell\in [m+n]\setminus [m]$, one has either $\rho_{\ell}=2$ or $\rho_{\ell}=2m$.  As a result, $\conv(\Dc(K_{m,n},\mathfrak{c}))$ is one of the polytopes presented in Example \ref{FGHIJ} (b). 
\end{proof}

\section{Paths} \label{sec6}
Let $n\geq 3$ and $P_n$ be the path of length $n-1$ on $\{x_1, \ldots, x_n\}$ whose edges are $$\{x_1,x_2\},\{x_2,x_3\}, \ldots, \{x_{n-1},x_n\}.$$

\begin{Example}
    \label{pathAAAAA}
    Let $n \geq 4$ be an even integer. If $\mathfrak{c}=(1,\ldots, 1) \in (\ZZ_{>0})^n$, then one has $\conv(\Dc(P_n,\mathfrak{c})) = \Qc_n \subset \RR^n$ (Example \ref{perfectmatching}).  Furthermore, if $\mathfrak{c}=(2,\ldots, 2) \in (\ZZ_{>0})^n$, then one has $\conv(\Dc(P_n,\mathfrak{c})) = \Qc'_n + (1,\ldots, 1) \subset \RR^n$ (Example \ref{2...2}).  
\end{Example}

\begin{Example}
    \label{pathBBBBB}
Let $n=5$.
\begin{itemize}
    \item[(i)] 
Let $\mathfrak{c}=(1,1,1,1,1)$.  One has $$\Bc(P_5,\mathfrak{c})=\{x_1x_2x_3x_4, x_1x_2x_4x_5, x_2x_3x_4x_5\}.$$  The $\rho_{(P_5,\mathfrak{c})}$-closed and $\rho_{(P_5,\mathfrak{c})}$-inseparable subsets are $\{1\}, \ldots, \{5\}$ and $\{1,3,5\}$.  Since $\rho_{(P_5,\mathfrak{c})}(\{1,3,5\})=2$, it follows from Lemma \ref{criterion} that $\conv(\Dc(P_5,\mathfrak{c}))$ is Gorenstein.  In fact, $\conv(\Dc(P_5,\mathfrak{c}))$ is defined by the system of linear inequalities $0 \leq x_i \leq 1$ for $1 \leq i \leq 5$ together with $x_1+x_3+x_5 \leq 2$.
    \item[(ii)] 
Let $\mathfrak{c}=(1,1,2,1,1)$.  One has $$\Bc(P_5,\mathfrak{c})=\{x_1x_2x_3x_4, x_1x_2x_4x_5, x_2x_3^2x_4, x_2x_3x_4x_5\}.$$  The $\rho_{(P_5,\mathfrak{c})}$-closed and $\rho_{(P_5,\mathfrak{c})}$-inseparable subsets are $\{1\}, \{2\}, \{4\}, \{5\}$ and $\{1,3,5\}$.  One has $\rho_{(P_5,\mathfrak{c})}(\{1,3,5\})=2$.  It  follows from Lemma \ref{criterion} that $\conv(\Dc(P_5,\mathfrak{c}))$ is Gorenstein.  In fact, $\conv(\Dc(P_5,\mathfrak{c}))$ is defined by the system of linear inequalities $0 \leq x_i \leq 1$ for $i=1,2,4,5$, $0 \leq x_3$ together with $x_1+x_3+x_5 \leq 2$.
    \item[(iii)] 
Let $\mathfrak{c}=(2,2,2,2,2)$. One has 
\begin{eqnarray*}
  \Bc(P_5,\mathfrak{c})=\{x_1^2x_2^2x_3^2x_4^2,x_1^2x_2^2x_4^2x_5^2, x_2^2x_3^2x_4^2x_5^2,
  \, \, \, \, \, \, \, \, \, \, \, \, \, \, \, \, \, \, \, 
  \\
  x_1x_2^2x_3^2x_4^2x_5, x_1x_2^2x_3x_4^2x_5^2, x_1^2x_2^2x_3x_4^2x_5\}.  
\end{eqnarray*}
  The $\rho_{(P_5,\mathfrak{c})}$-closed and $\rho_{(P_5,\mathfrak{c})}$-inseparable subsets are $\{1\}, \ldots, \{5\}$ and $\{1,3,5\}$.  Since $\rho_{(P_5,\mathfrak{c})}(\{1,3,5\})=4$, it follows from Lemma \ref{criterion} that $\conv(\Dc(P_5,\mathfrak{c}))$ is Gorenstein.  In fact, $\conv(\Dc(P_5,\mathfrak{c}))$ is defined by the system of linear inequalities $0 \leq x_i \leq 2$ for $1 \leq i\leq 5$ together with $x_1+x_3+x_5 \leq 4$.
    \item[(iv)] 
Let $\mathfrak{c}=(2,2,4,2,2)$.  One has 
\begin{eqnarray*}
  \Bc(P_5,\mathfrak{c})=\{x_1^2x_2^2x_3^2x_4^2,x_1^2x_2^2x_4^2x_5^2, x_2^2x_3^2x_4^2x_5^2,
  \, \, \, \, \, \, \, \, \, \, \, \, \, \, \, \, \, \, \, 
  \\
  x_1x_2^2x_3^2x_4^2x_5, x_1x_2^2x_3x_4^2x_5^2, x_1^2x_2^2x_3x_4^2x_5
  \, \, \, \, \, \, \, \\
  x_2^2x_3^4x_3^2, x_1x_2^2x_3^3x_4^2, x_2^2x_3^3x_4^2x_5 
  \}.  
\end{eqnarray*}
The $\rho_{(P_5,\mathfrak{c})}$-closed and $\rho_{(P_5,\mathfrak{c})}$-inseparable subsets are $\{1\}, \{2\}, \{4\}, \{5\}$ and $\{1,3,5\}$.  One has $\rho_{(P_5,\mathfrak{c})}(\{1,3,5\})=4$.  It follows from Lemma \ref{criterion} that $\conv(\Dc(P_5,\mathfrak{c}))$ is Gorenstein.  In fact, $\conv(\Dc(P_5,\mathfrak{c}))$ is defined by the system of linear inequalities $0 \leq x_i \leq 2$ for $i=1,2,4,5$, $0 \leq x_3$ together with $x_1+x_3+x_5 \leq 4$.
\end{itemize}
\end{Example}

\begin{Lemma} \label{pathodd1}
Let $n\geq 7$ be an odd integer and $\mathfrak{c}=(1, \ldots, 1)\in (\ZZ_{>0})^n$.  Then  $\conv(\Dc(P_n,\mathfrak{c}))$ is not  Gorenstein.     
\end{Lemma}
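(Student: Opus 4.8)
The strategy is to exhibit, for every odd $n \geq 7$, a $\rho_{(P_n,\mathfrak{c})}$-closed and $\rho_{(P_n,\mathfrak{c})}$-inseparable subset $A \subset [n]$ whose rank violates the uniformity condition $\rho_{(P_n,\mathfrak{c})}(A) = \frac{1}{k}(|A|+1)$ forced by Lemma \ref{criterion}, relative to what the singletons force $k$ to be. First I would compute $\delta_{\mathfrak{c}}(I(P_n))$ and describe $\Bc(P_n,\mathfrak{c})$: with $\mathfrak{c}=(1,\ldots,1)$ a generator of $(I(P_n)^q)_{\mathfrak{c}}$ is a squarefree monomial supported on the vertex set of a subgraph that is a disjoint union of $q$ edges of $P_n$, so $\delta_{\mathfrak{c}}(I(P_n)) = \mathrm{match}(P_n) = \lfloor n/2 \rfloor = (n-1)/2$, and the elements of $\Bc(P_n,\mathfrak{c})$ are the squarefree monomials of degree $n-1$ corresponding to maximum matchings of $P_n$ (equivalently, to the choice of the single unmatched vertex $x_j$, where $j$ must be odd). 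Thus $\Bc(P_n,\mathfrak{c}) = \{\, x_1\cdots x_n / x_j : j \text{ odd}\,\}$, and for $X \subset [n]$ one has $\rho_{(P_n,\mathfrak{c})}(X) = |X|$ unless $X$ contains all odd indices, in which case $\rho_{(P_n,\mathfrak{c})}(X) = |X| - 1$.

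\textbf{Identifying the bad closed inseparable set.} Each singleton $\{i\}$ satisfies $\rho_{(P_n,\mathfrak{c})}(\{i\}) = 1$ and, since $P_n$ has no two nonadjacent vertices with $N_{P_n}(x_i) \subseteq N_{P_n}(x_k)$ when $n \geq 5$ (a quick check on the path), Lemma \ref{noinclusion1} shows every singleton is $\rho_{(P_n,\mathfrak{c})}$-closed and $\rho_{(P_n,\mathfrak{c})}$-inseparable; hence if $\conv(\Dc(P_n,\mathfrak{c}))$ were Gorenstein, Lemma \ref{criterion} forces $k = 2$. Now consider the set $O$ of all odd indices, so $|O| = (n+1)/2$ and $\rho_{(P_n,\mathfrak{c})}(O) = |O| - 1 = (n-1)/2$. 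One checks $O$ is $\rho_{(P_n,\mathfrak{c})}$-closed: adjoining any even index $j$ to $O$ gives a set whose rank is $|O| + 1 - 1 = |O| = (n+1)/2 > (n-1)/2$ (since once all odds are present, rank is cardinality minus one). For inseparability of $O$: write $O = O' \sqcup O''$; the rank of a proper subset of $O$ not containing all odds is just its cardinality, so $\rho(O') + \rho(O'') = |O'| + |O''| = |O| > |O| - 1 = \rho(O)$ whenever both parts are nonempty. Hence $O$ is $\rho_{(P_n,\mathfrak{c})}$-closed and $\rho_{(P_n,\mathfrak{c})}$-inseparable, but $\rho_{(P_n,\mathfrak{c})}(O) = \frac{n-1}{2}$ while $\frac{1}{k}(|O|+1) = \frac{1}{2}\cdot\frac{n+3}{2} = \frac{n+3}{4}$; these are equal only when $\frac{n-1}{2} = \frac{n+3}{4}$, i.e. $n = 5$. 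For $n \geq 7$ this is a contradiction, so $\conv(\Dc(P_n,\mathfrak{c}))$ is not Gorenstein.

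\textbf{Main obstacle.} The only genuinely delicate point is verifying that the candidate set $O$ (all odd indices) is $\rho$-closed and $\rho$-inseparable, which rests on the precise formula $\rho_{(P_n,\mathfrak{c})}(X) = |X| - [\,O \subseteq X\,]$; establishing that formula in turn requires knowing that a set of vertices in $P_n$ is covered by a matching of size $(n-1)/2$ exactly when it misses at least one odd vertex, i.e. a careful analysis of which squarefree monomials of degree $n-1$ supported on $P_n$ actually lie in $I(P_n)^{(n-1)/2}$. This is a finite combinatorial fact about matchings of a path and is where I would spend most of the effort; everything else is the arithmetic comparison above. An alternative, should the matching analysis prove awkward to write cleanly, is to instead use the singleton $\{2\}$ together with a small explicit $\rho$-closed $\rho$-inseparable triple such as $\{1,3,5\}$ — which has rank $2$ and would demand $k$ with $\frac{1}{k}\cdot 4 = 2$, consistent — and then a larger odd set to derive the contradiction; but the set $O$ above handles all odd $n \geq 7$ uniformly and is the cleanest route.
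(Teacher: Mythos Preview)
Your approach is correct and essentially identical to the paper's: both exhibit the singleton $\{1\}$ (with $\rho=1$, forcing $k=2$) and the set $O=\{1,3,5,\ldots,n\}$ of odd indices (with $\rho=(n-1)/2$) as closed inseparable sets that cannot simultaneously satisfy Lemma~\ref{criterion} when $n\geq 7$; you simply supply the details (the description of $\Bc(P_n,\mathfrak{c})$ and the formula $\rho(X)=|X|-[O\subseteq X]$) that the paper leaves as ``one easily sees.'' One small slip: your parenthetical claim that Lemma~\ref{noinclusion1} applies to \emph{every} singleton is false for $i=3$ and $i=n-2$ (e.g.\ $N_{P_n}(x_1)=\{x_2\}\subseteq\{x_2,x_4\}=N_{P_n}(x_3)$), but this is harmless, since your rank formula already shows directly that every singleton is $\rho$-closed, and in any case the single closed singleton $\{1\}$ suffices.
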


\begin{proof}
One easily sees that the sets $\{1\}$ and $\{1,3,5, \ldots, n\}$ are $\rho_{(P_n,\mathfrak{c})}$-closed and $\rho_{(P_n,\mathfrak{c})}$-inseparable with $\rho_{(P_n,\mathfrak{c})}(\{1\})=1$ and $\rho_{(P_n,\mathfrak{c})}(\{1,3,5,\ldots, n\})=(n-1)/2$.  Hence,  $\conv(\Dc(P_n,\mathfrak{c}))$ is not Gorenstein (Lemma \ref{criterion}).     
\end{proof}

\begin{Lemma} \label{pathodd2}
Let $n\geq 7$ be an odd integer and $\mathfrak{c}=(2, \ldots, 2)\in (\ZZ_{>0})^n$.  Then  $\conv(\Dc(P_n,\mathfrak{c}))$ is not  Gorenstein.     
\end{Lemma}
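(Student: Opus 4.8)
The plan is to imitate the proof of Lemma~\ref{pathodd1}: I will exhibit two $\rho_{(P_n,\mathfrak{c})}$-closed and $\rho_{(P_n,\mathfrak{c})}$-inseparable subsets of $[n]$ whose ground set rank values cannot simultaneously satisfy the equation in Lemma~\ref{criterion}. The two subsets will be the singleton $\{1\}$ and the set $A:=\{1,3,5,\ldots,n\}$ of odd indices, which contains $n$ because $n$ is odd.

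First I would determine $\delta:=\delta_{\mathfrak{c}}(I(P_n))$. The monomial $\{x_1,x_2\}\{x_2,x_3\}\cdots\{x_{n-1},x_n\}=x_1x_2^2x_3^2\cdots x_{n-1}^2x_n$ is a $\mathfrak{c}$-bounded product of $n-1$ edges, so $\delta\geq n-1$; conversely a $\mathfrak{c}$-bounded monomial has degree at most $2n$ while a monomial in $I(P_n)^q$ has degree at least $2q$, so $\delta\leq n$, and $\delta=n$ would force $x_1^2\cdots x_n^2$ to be a product of $n$ edges of $P_n$. Tracking the multiplicity $m_j$ of the edge $\{x_j,x_{j+1}\}$ through the relations $m_{j-1}+m_j=2$ (with $m_0=m_n=0$) gives $m_1=2,\,m_2=0,\,m_3=2,\ldots$, hence $m_{n-1}=2$ with $n-1$ odd, which is impossible for $n$ odd. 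Thus $\delta=n-1$, and since $(I(P_n)^{n-1})_{\mathfrak{c}}$ is equigenerated, every element of $\Bc(P_n,\mathfrak{c})$ has degree $2(n-1)$ and is therefore a $\mathfrak{c}$-bounded product of exactly $n-1$ edges of $P_n$ (and conversely every such product lies in $\Bc(P_n,\mathfrak{c})$).

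Next I use that $P_n$ is bipartite with parts $A$ (odd indices) and $[n]\setminus A$ (even indices), every edge meeting each part in exactly one vertex. Hence for every $u\in\Bc(P_n,\mathfrak{c})$ the sum of the exponents of $u$ on the variables indexed by $A$ equals the number $n-1$ of edges, so $\rho_{(P_n,\mathfrak{c})}(A)=n-1$; moreover $A$ is $\rho_{(P_n,\mathfrak{c})}$-closed, since any $B\supsetneq A$ contains an even index $j$ on which $x_1x_2^2\cdots x_{n-1}^2x_n$ has exponent $2$, giving $\rho_{(P_n,\mathfrak{c})}(B)\geq(n-1)+1>\rho_{(P_n,\mathfrak{c})}(A)$. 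For the singleton, Lemma~\ref{rhomin} gives $\rho_{(P_n,\mathfrak{c})}(\{1\})=\min\{c_1,c_2\}=2$, and Lemma~\ref{noinclusion1} shows $\{1\}$ is $\rho_{(P_n,\mathfrak{c})}$-closed and $\rho_{(P_n,\mathfrak{c})}$-inseparable (for each $k\neq 1,2$ one has $\emptyset\neq N_{P_n}(x_k)\nsubseteq\{x_2\}=N_{P_n}(x_1)$).

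It remains to show $A$ is $\rho_{(P_n,\mathfrak{c})}$-inseparable, which is the step I expect to need the most care. I would prove that $\rho_{(P_n,\mathfrak{c})}(A')=2|A'|$ for every proper nonempty $A'\subsetneq A$; then for any partition $A=A'\sqcup A''$ into nonempty parts $\rho_{(P_n,\mathfrak{c})}(A')+\rho_{(P_n,\mathfrak{c})}(A'')=2|A|=n+1>n-1=\rho_{(P_n,\mathfrak{c})}(A)$, so no partition is rank-additive. The bound $\rho_{(P_n,\mathfrak{c})}(A')\leq 2|A'|$ holds since $c_i=2$; for the reverse I construct a $\mathfrak{c}$-bounded product of $n-1$ edges covering each $x_i$ with $i\in A'$ exactly twice. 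Letting $a_j$ be the multiplicity of $\{x_{2j-1},x_{2j}\}$ and $2-a_j$ that of $\{x_{2j},x_{2j+1}\}$, so that each even vertex is covered exactly twice and the multiplicities sum to $n-1$, the bound $c_{2j+1}=2$ forces $a_{j+1}\leq a_j$, the exponent of $x_1$ is $a_1$, that of $x_n$ is $2-a_{(n-1)/2}$, and that of an interior $x_{2j+1}$ is $(2-a_j)+a_{j+1}$, which equals $2$ exactly when $a_{j+1}=a_j$. Since $A'\subsetneq A$ some odd index is absent from $A'$, so a non-increasing $\{0,1,2\}$-valued sequence $(a_j)$ meeting all the required equalities can be produced by a short case analysis on whether $1$ and $n$ lie in $A'$; supplying this bookkeeping is the only delicate point. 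Finally, feeding $X=\{1\}$ into Lemma~\ref{criterion} forces $k=1$, and then $X=A$ forces $n-1=|A|+1=(n+3)/2$, i.e.\ $n=5$, contradicting $n\geq 7$; hence $\conv(\Dc(P_n,\mathfrak{c}))$ is not Gorenstein.
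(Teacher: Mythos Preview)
Your proposal is correct and follows exactly the same strategy as the paper: exhibit the two subsets $\{1\}$ and $A=\{1,3,5,\ldots,n\}$, verify they are $\rho_{(P_n,\mathfrak{c})}$-closed and $\rho_{(P_n,\mathfrak{c})}$-inseparable with $\rho_{(P_n,\mathfrak{c})}(\{1\})=2$ and $\rho_{(P_n,\mathfrak{c})}(A)=n-1$, and conclude via Lemma~\ref{criterion}. The paper's own proof is a single sentence that simply asserts these facts; you have supplied the details it omits (the computation of $\delta_{\mathfrak{c}}(I(P_n))=n-1$, the bipartite counting for $\rho(A)$, and the inseparability of $A$ via $\rho(A')=2|A'|$ for proper $A'\subsetneq A$), and the case analysis you flag as ``the only delicate point'' does go through as you indicate.
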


\begin{proof}
One easily sees that the sets $\{1\}$ and $\{1,3,5, \ldots, n\}$ are $\rho_{(P_n,\mathfrak{c})}$-closed and $\rho_{(P_n,\mathfrak{c})}$-inseparable with $\rho_{(P_n,\mathfrak{c})}(\{1\})=2$ and $\rho_{(P_n,\mathfrak{c})}(\{1,3,5,\ldots, n\})=n-1$.  Hence,  $\conv(\Dc(P_n,\mathfrak{c}))$ is not Gorenstein (Lemma \ref{criterion}).     
\end{proof}
    
We now come to the classification of Gorenstein polytopes arising from paths.    

\begin{Theorem}
    \label{pathMAIN}
Let $P_n$ be the path of length $n-1$ with $n \geq 3$.  The Gorenstein polytopes of the form $\conv(\Dc(P_n,\mathfrak{c}))$ are those of Examples \ref{pathAAAAA} and \ref{pathBBBBB}   
\end{Theorem}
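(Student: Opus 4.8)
The plan is to split along the dichotomy forced by Lemma~\ref{noinclusion}. Since $P_n$ is connected and any two nonadjacent vertices $x_i,x_j$ satisfy $N_{P_n}(x_i)\not\subseteq N_{P_n}(x_j)$ (the neighborhoods of distinct vertices in a path are never nested: $N(x_1)=\{x_2\}$, $N(x_i)=\{x_{i-1},x_{i+1}\}$ for $2\le i\le n-1$, and $N(x_n)=\{x_{n-1}\}$, and one checks case by case these are incomparable under inclusion), Lemma~\ref{noinclusion} applies. Hence if $\conv(\Dc(P_n,\mathfrak{c}))$ is Gorenstein, then either $\mathfrak{c}=(1,\ldots,1)$ or $\mathfrak{c}=(2,\ldots,2)$. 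This reduces the classification to these two vectors, for each value of $n\ge 3$.

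Next I would dispose of the case $\mathfrak{c}=(1,\ldots,1)$. If $n$ is even, then $P_n$ has a perfect matching, so by Example~\ref{perfectmatching} we get $\conv(\Dc(P_n,\mathfrak{c}))=\Qc_n$, which appears in Example~\ref{pathAAAAA}. If $n$ is odd, then for $n=3$ the path $P_3$ is Hamiltonian-with-$n=3$ in the sense of Example~\ref{ham} (indeed $\conv(\Dc(P_3,\mathfrak{c}))=\Pc_3$, listed in Example~\ref{pathBBBBB}~(i) when $n=5$ — here one must be a little careful and note that for $n=3$ it coincides with the $\Pc_3$ already recorded, while for $n=5$ it is Example~\ref{pathBBBBB}~(i)); for $n=5$ it is Gorenstein by Example~\ref{pathBBBBB}~(i); and for odd $n\ge 7$ it fails to be Gorenstein by Lemma~\ref{pathodd1}. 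The case $\mathfrak{c}=(2,\ldots,2)$ is handled identically: $n$ even gives $\Qc'_n+(1,\ldots,1)$ from Example~\ref{2...2} (recorded in Example~\ref{pathAAAAA}), $n=3$ and $n=5$ are covered by Example~\ref{pathBBBBB}~(iii) (and the $n=3$ analogue), and odd $n\ge 7$ fails by Lemma~\ref{pathodd2}.

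What remains, and this is the genuine work, is the cases $n=5$ with the two \emph{non-constant} vectors $\mathfrak{c}=(1,1,2,1,1)$ and $\mathfrak{c}=(2,2,4,2,2)$ of Example~\ref{pathBBBBB}~(ii) and (iv): Lemma~\ref{noinclusion} does \emph{not} apply to exclude these, because $P_5$ fails its hypothesis only through the middle vertex $x_3$. So I would need to argue directly that when $n=5$, the \emph{only} non-constant $\mathfrak{c}$ giving a Gorenstein polytope are $(1,1,2,1,1)$ and $(2,2,4,2,2)$ (up to the already-handled cases). The strategy: the vertices $x_1,x_2,x_4,x_5$ all satisfy the hypothesis of Lemma~\ref{noinclusion1} (only $x_3$ is the exceptional vertex, and each of $x_1,x_2,x_4,x_5$ either is adjacent to $x_3$ or, for $x_1$ and $x_5$, has $N(x_3)=\{x_2,x_4\}$ not containing its own neighbor), so each singleton $\{1\},\{2\},\{4\},\{5\}$ is $\rho$-closed and $\rho$-inseparable. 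By Lemma~\ref{criterion} there is $k$ with $\rho_{(P_5,\mathfrak{c})}(\{i\})=2/k$ for $i\in\{1,2,4,5\}$, forcing $k\in\{1,2\}$ and $\rho_1=\rho_2=\rho_4=\rho_5\in\{1,2\}$; by Lemma~\ref{rhomin}, $\rho_i=\min\{c_i,\sum_{x_k\in N(x_i)}c_k\}$, and a short analysis of these four equations (using $N(x_1)=\{x_2\}$, $N(x_2)=\{x_1,x_3\}$, etc.) pins down $c_1=c_2=c_4=c_5=1$ (resp.\ $=2$). One then shows $\{1,3,5\}$ is $\rho$-closed and $\rho$-inseparable with $\rho(\{1,3,5\})=2$ (resp.\ $4$) regardless of $c_3\ge 1$, and — this is the crux — that the Gorenstein condition of Lemma~\ref{criterion} is then \emph{automatically satisfied for every} $c_3\ge 1$, so $c_3$ is unconstrained; finally one checks that the resulting polytope depends on $c_3$ only through whether $c_3=1$ (resp.\ $2$), giving Example~\ref{pathAAAAA}, versus $c_3\ge 2$ (resp.\ $c_3\ge 3$), all of which give the single polytope of Example~\ref{pathBBBBB}~(ii) (resp.\ (iv)) since the constraint $0\le x_3$ then becomes vacuous at the top face. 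The main obstacle is precisely this last step: verifying that for $n=5$ no \emph{other} $\rho$-closed-inseparable set intervenes and that enlarging $c_3$ beyond the threshold genuinely stabilizes the polytope — this requires computing $\Bc(P_5,\mathfrak{c})$ carefully, as partially done in Example~\ref{pathBBBBB}, and confirming the facet description is insensitive to $c_3$ once $c_3$ is large enough.
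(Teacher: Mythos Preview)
Your opening move is incorrect: Lemma~\ref{noinclusion} does \emph{not} apply to any path $P_n$. You assert that neighborhoods of distinct vertices in a path are never nested, but $N_{P_n}(x_1)=\{x_2\}\subseteq\{x_2,x_4\}=N_{P_n}(x_3)$, and symmetrically $N_{P_n}(x_n)\subseteq N_{P_n}(x_{n-2})$. So the hypothesis of Lemma~\ref{noinclusion} fails for every $n\ge 3$, not only for $n=5$. You partially notice this in your third paragraph (``$P_5$ fails its hypothesis only through the middle vertex $x_3$''), but this directly contradicts your first paragraph and leaves the case $n\neq 5$ completely unaddressed: you have given no argument that $\mathfrak{c}$ must be constant when $n\in\{4,6,7,8,\ldots\}$. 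Ruling out non-constant $\mathfrak{c}$ for those $n$ is exactly where the work lies. The paper does this by applying Lemma~\ref{noinclusion1} only to the singletons $\{i\}$ with $i\notin\{3,n-2\}$ (these are the vertices for which the hypothesis holds), deducing $\rho_i\in\{1,2\}$ for those $i$, and then running a delicate maximal-set argument at the exceptional indices $3$ and $n-2$ to force $\rho_3,\rho_{n-2}$ into line or reach a contradiction via Lemma~\ref{criterion}.

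There is also a secondary error: your treatment of $n=3$ is wrong. The path $P_3$ is not Hamiltonian (it has two edges, not three), so Example~\ref{ham} is irrelevant, and $\conv(\Dc(P_3,\mathfrak{c}))$ is \emph{never} Gorenstein. Indeed $P_3=K_{1,2}$, and Theorem~\ref{completebipartite} excludes it (neither Example~\ref{ABCDE} nor Example~\ref{FGHIJ} has parameters matching $m+n=3$); concretely, with $\mathfrak{c}=(1,1,1)$ the sets $\{2\}$ and $\{1,3\}$ are both $\rho$-closed and $\rho$-inseparable with $\rho$-value $1$, and $(|A|+1)/\rho(A)$ takes the incompatible values $2$ and $3$. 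Your $n=5$ discussion, by contrast, is on the right track and close to what the paper does there.
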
 

\begin{proof}
Since $P_3=K_{1,2}$, it follows from Theorem \ref{completebipartite} that for any $\mathfrak{c}\in (\ZZ_{>0})^3$, the polytope $\conv(\Dc(P_3,\mathfrak{c}))$ is not Gorenstein. So, assume that $n\geq 4$. Let $\mathfrak{c}\in (\ZZ_{>0})^n$ and suppose that $\conv(\Dc(P_n,\mathfrak{c}))$ is Gorenstein. For every integer $i=1, \ldots, n$, set $\rho_i:=\rho_{(P_n,\mathfrak{c})}(\{i\})$. Note that for each $i\notin \{3, n-2\}$ and for each $j\neq i$, we have $N_{P_n}(x_j)\nsubseteq N_{P_n}(x_i)$. Thus, Lemma \ref{noinclusion1} shows that the singleton $\{i\}$ is $\rho_{(P_n,\mathfrak{c})}$-closed and $\rho_{(P_n,\mathfrak{c})}$-inseparable. It follows from Lemma \ref{criterion} that either $\rho_i=1$, for each $i\in [n]\setminus \{3, n-2\}$, or $\rho_i=2$, for each $i\in [n]\setminus \{3, n-2\}$. For each $\ell\in \{3, n-2\}$, let $A_{\ell}$ be a maximal subset of $[n]$ containing $\ell$ such that $\rho_{(P_n,\mathfrak{c})}(A_{\ell})=\rho_{\ell}$. Assume that $A_{\ell}', A_{\ell}''$ are nonempty disjoint subsets of $A_{\ell}$ with $A_{\ell}=A_{\ell}'\cup A_{\ell}''$. Without loss of generality, we may assume that $\ell\in A_{\ell}'$. Thus, $\rho_{(P_n,\mathfrak{c})}(A_{\ell}')=\rho_{\ell}=\rho_{(P_n,\mathfrak{c})}(A_{\ell})$. Consequently, $\rho_{(P_n,\mathfrak{c})}(A_{\ell}')+\rho_{(P_n,\mathfrak{c})}(A_{\ell}'') > \rho_{(P_n,\mathfrak{c})}(A_{\ell})$. This inequality shows that $A_{\ell}$ is $\rho_{(P_n,\mathfrak{c})}$-inseparable. We divide the rest of the proof into the following cases. 

\medskip

{\bf Case 1.} Suppose that $\rho_i=1$, for each $i\in [n]\setminus \{3, n-2\}$. Since for each $i\notin\{1,n\}$, we have ${\rm deg}_{P_n}(x_j)\geq 2$, it follows from Lemma \ref{rhomin} that $c_i=1$, for each $i\notin\{1,3,n-2,n\}$. 

First, assume that $n=5$. Then it folllows from the preceding paragraph that $c_2=c_4=1$. Since $x_1$ and $x_5$ are leaves of $P_5$ and their unique neighbors are $x_2$, $x_4$, respectively, we deduce that $\rho_1=\rho_5=1$. Moreover, it follows from $N_{P_5}(x_3)=\{x_2,x_4\}$ that $\rho_3\leq 2$.  As a result, $\conv(\Dc(P_5,\mathfrak{c}))$ is one of the polytopes presented in Example \ref{pathBBBBB} (i)-(ii).  

Now, suppose that $n\neq 5$. Thus, $n=4$ or $n\geq 6$. If $\{3\}$ and $\{n-2\}$ are $\rho_{(P_n,\mathfrak{c})}$-closed, then we conclude from Lemma \ref{criterion} and our assumption in this case that $\rho_3=\rho_{n-2}=1$. Hence,  $\Bc(P_n,\mathfrak{c})=\Bc(P_n,\mathfrak{c'})$, where $\mathfrak{c'}=(1,\ldots, 1) \in (\ZZ_{>0})^n$.  Lemma \ref{pathodd1} implies that $n$ is even. Consequently, $\conv(\Dc(P_n,\mathfrak{c}))$ is the polytope presented in Example \ref{pathAAAAA}. Now, suppose that there is an integer $\ell\in \{3, n-2\}$ such that $\{\ell\}$ is not $\rho_{(P_n,\mathfrak{c})}$-closed. Let $A_{\ell}$ be the set defined in the first paragraph of the proof. Hence, $|A_{\ell}|\geq 2$. Note that for each integer $j\in [n]$, with $N_{P_n}(x_j)\nsubseteq N_{P_n}(x_{\ell})$, we conclude from the proof of Lemma \ref{noinclusion1} that $\rho_{(P_n,\mathfrak{c})}(\{j,\ell\}) > \rho_{(P_n,\mathfrak{c})}(\{\ell\})$. In particular, $j\notin A_\ell$. This conclusion together with the structure of $P_n$ shows $A_{\ell}\setminus\{\ell\}\subseteq \{1,n\}$ and (since $n\neq 5$) equality never holds. Thus, $|A_{\ell}|=2$. It follows from the maximality of $A_{\ell}$ that it is  $\rho_{(P_n,\mathfrak{c})}$-closed. Also, recall from the first paragraph of the proof that $A_{\ell}$ is $\rho_{(P_n,\mathfrak{c})}$-inseparable. This contradicts Lemma \ref{criterion}, as $|A_{\ell}|+1=3$ is odd.

\medskip

{\bf Case 2.} Suppose that $\rho_i=2$, for each $i\in [n]\setminus \{3, n-2\}$. If $\{3\}$ and $\{n-2\}$ are $\rho_{(P_n,\mathfrak{c})}$-closed, then we conclude from Lemma \ref{criterion} and our assumption in this case that $\rho_3=\rho_{n-2}=2$. Hence, it follows from Lemma \ref{pathodd2} that either $n=5$ or $n$ is even. Thus $\conv(\Dc(P_n,\mathfrak{c}))$ is one of the polytopes in Examples \ref{pathAAAAA} and \ref{pathBBBBB} (iii).  

Now, suppose that there is an integer $\ell\in \{3, n-2\}$, say $\ell=3$, such that $\{\ell\}$ is not $\rho_{(P_n,\mathfrak{c})}$-closed. As defined in the first paragraph of the proof, let $A_3$ be the maximal subset of $[n]$ containing $3$ such that $\rho_{(P_n,\mathfrak{c})}(A_3)=\rho_3$. Hence, $|A_3|\geq 2$. By the same argument as in Case 1, we have $A_3=\{1,3\}$ if $n\neq 5$, and $A_3\subseteq \{1,3,5\}$ if $n=5$. In particular, $2\leq |A_3|\leq 3$. It follows from the maximality of $A_3$ that it is  $\rho_{(P_n,\mathfrak{c})}$-closed. Also, recall from the first paragraph of the proof that $A_3$ is $\rho_{(P_n,\mathfrak{c})}$-inseparable. First, suppose that $|A_3|=2$. We deduce from Lemma \ref{criterion} and our assumption in this case that $\rho_3=\rho_{(P_n,\mathfrak{c})}(A_3)=3$. 

\medskip

{\bf Claim.} $\rho_{n-2}\neq 1$.

\medskip

{\it Proof of the claim.} Assume that $\rho_{n-2}=1$. Note that $\{n-2\}$ is $\rho_{(P_n,\mathfrak{c})}$-inseparable. so, it cannot be $\rho_{(P_n,\mathfrak{c})}$-closed, as otherwise it contradicts Lemma \ref{criterion}. Since for each $j\in[n]\setminus \{1,n\}$, we have $N_{P_n}(x_j)\nsubseteq N_{P_n}(x_{n-2})$, we conclude from the proof of Lemma \ref{noinclusion1} that $\rho_{(P_n,\mathfrak{c})}(\{n-2, j\})> \rho_{(P_n,\mathfrak{c})}(\{n-2\})$. Moreover, the same argument shows that if $n\neq 5$, then $\rho_{(P_n,\mathfrak{c})}(\{1, n-2\})> \rho_{(P_n,\mathfrak{c})}(\{n-2\})$. We prove  that $\rho_{(P_n,\mathfrak{c})}(\{n-2, n\})> \rho_{(P_n,\mathfrak{c})}(\{n-2\})$ and if $n=5$, then $\rho_{(P_5,\mathfrak{c})}(\{1, 3\})> \rho_{(P_n,\mathfrak{c})}(\{3\})$. This yields that $\{n-2\}$ is $\rho_{(P_n,\mathfrak{c})}$-closed, a contradiction. Let $v\in \Bc(P_n,\mathfrak{c})$ be a monomial with ${\rm deg}_{x_{n-2}}(v)=\rho_{n-2}=1$. If $x_n$ does not divide $v$, then it follows from ${\rm deg}_{x_{n-2}}(v)=1$ that ${\rm deg}_{x_{n-1}}(v)\leq 1$. Since $c_{n-1}\geq \rho_{n-1}\geq 2$, we deduce that $(x_{n-1}x_n)v\in (I(P_n)^{\delta_{\mathfrak{c}}(I(P_n))+1})_{\mathfrak{c}}$, a contradiction. Thus, $x_n$ divides $v$ which implies that $\rho_{(P_n,\mathfrak{c})}(\{n-2, n\})\geq 2 > \rho_{(P_n,\mathfrak{c})}(\{n-2\})$. Similarly, if $n=5$, then $\rho_{(P_5,\mathfrak{c})}(\{1, 3\})> \rho_{(P_5,\mathfrak{c})}(\{3\})$. This completes the proof of the claim.

\medskip

Note that by our assumption in this case,  $c_2\geq 2$ and $c_4\geq2$ (the inequalities follow from the claim  if $n=4$ or $6$). We show that $1\notin A_3$. To prove this, it is enough to show that $\rho_{(P_n,\mathfrak{c})}(\{1, 3\})> 3=\rho_3=\rho_{(P_n,\mathfrak{c})}(\{3\})$. Let $u\in \Bc(P_n,\mathfrak{c})$ be a monomial with ${\rm deg}_{x_3}(u)=\rho_3=3$ and suppose that $u=e_1\cdots e_{\delta}$, where $\delta=\delta_{\mathfrak{c}}(I(P_n))$ and $e_1, \ldots, e_{\delta}$ are edges of $P_n$. If $x_1$ divides $u$, then it follows that$$\rho_{(P_n,\mathfrak{c})}(\{1, 3\}\geq {\rm deg}_{x_1}(u)+{\rm deg}_{x_3}(u)\geq 4.$$Suppose that $x_1$ does not divide $u$. If ${\rm deg}_{x_2}(u)<2\leq c_2$, then $(x_1x_2)u\in (I(P_n)^{\delta+1})_{\mathfrak{c}}$, a contradiction. Thus, ${\rm deg}_{x_2}(u)\geq 2$. In particular, in the representation of $u$ as $u=e_1\cdots e_{\delta}$, there is an edge, say, $e_1$ which is equal to $\{x_2, x_3\}$. If ${\rm deg}_{x_4}(u) < 2\leq c_4$, then$$(x_1x_4)u=(x_1x_2)(x_3x_4)e_2\cdots e_{\delta}\in (I(P_n)^{\delta+1})_{\mathfrak{c}},$$a contradiction. Therefore, ${\rm deg}_{x_4}(u)\geq 2$ Since $\rho_3=3$ and ${\rm deg}_{x_2}(u)\geq 2$, it follows that in the representation of $u$, there is an edge, say, $e_2$ which is equal to $\{x_4, x_5\}$. Thus$$\frac{ux_1}{x_5}=(x_1x_2)(x_3x_4)e_3\cdots e_{\delta}\in \Bc(P_n,\mathfrak{c}).$$Hence
\begin{align*}
\rho_{(P_n,\mathfrak{c})}(\{1, 3\})& \geq {\rm deg}_{x_1}(vx_1/x_5)+{\rm deg}_{x_3}(vx_1/x_5)\\ &>{\rm deg}_{x_3}(vx_1/x_5)={\rm deg}_{x_3}(u)=3.
\end{align*}
So, $1\notin A_3$. Similarly, if $n=5$, one can show that $5\notin A_3$. This is a contradiction, as $A_3=\{1,3\}$ if $n\neq 5$, and $A_3\subseteq \{1,3,5\}$ if $n=5$.

Suppose that $|A_3|=3$. Therefore, $n=5$ and $A_3=\{1,3,5\}$.  One has $\rho_3=\rho_{(P_n,\mathfrak{c})}(A)=4$ (Lemma \ref{criterion}). Since $\rho_1=\rho_2=\rho_4=\rho_5=2$, it follows that $\conv(\Dc(P_n,\mathfrak{c}))$ is the polytopes presented in Example \ref{pathBBBBB} (iv).  
\end{proof}

\section{Regular bipartite graphs} \label{sec7}
We now turn to the discussion of finding Gorenstein polytopes $\conv(\Dc(G,\mathfrak{c}))$ arising from connected regular bipartite graphs. A finite graph $G$ on $\{x_1, \ldots, x_n\}$ is called {\em $k$-regular} if ${\rm deg}_G(x_i)=k$ for all $1 \leq i \leq n$.  
\begin{Lemma}
\label{regular}
Let $G$ be a connected $k$-regular (not necessarily bipartite) graph on $n\geq 3$ vertices and $\mathfrak{c}=(c_1, \ldots, c_n)\in (\ZZ_{>0})^n$. If $\conv(\Dc(G,\mathfrak{c}))$ is Gorenstein, then either $c_1=\cdots =c_n=1$ or $c_1=\cdots =c_n=2$.
\end{Lemma}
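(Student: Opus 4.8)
The plan is to reduce the statement about a general connected $k$-regular graph to the already-proven Lemma~\ref{noinclusion}. Recall that Lemma~\ref{noinclusion} gives the desired conclusion ($c_1=\cdots=c_n=1$ or $c_1=\cdots=c_n=2$) for any connected graph $G$ with the property that nonadjacent vertices $x_i,x_j$ never satisfy $N_G(x_i)\subseteq N_G(x_j)$. So it suffices to check that every connected $k$-regular graph on $n\geq 3$ vertices enjoys this containment-free property.

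First I would handle this: suppose $x_i,x_j$ are nonadjacent and $N_G(x_i)\subseteq N_G(x_j)$. Since $G$ is $k$-regular, $|N_G(x_i)|=|N_G(x_j)|=k$, hence the containment forces $N_G(x_i)=N_G(x_j)$. Because $x_i$ and $x_j$ are not adjacent, $x_i\notin N_G(x_j)=N_G(x_i)$ (there are no loops anyway) and likewise $x_j\notin N_G(x_i)$, so this is internally consistent, and we cannot immediately derive a contradiction from regularity alone; twin vertices with identical neighborhoods do occur in regular graphs. The resolution is to use connectedness together with $n\geq 3$: having $N_G(x_i)=N_G(x_j)$ means $\{x_i,x_j\}$ is a pair of false twins, and I claim this is compatible with connectedness only in degenerate situations that still respect the hypothesis of Lemma~\ref{noinclusion}. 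Actually, the cleanest route is different: I would simply invoke Lemma~\ref{noinclusion1} directly rather than Lemma~\ref{noinclusion}. For each $i\in[n]$ I want to verify the hypothesis of Lemma~\ref{noinclusion1}, namely that for each $k\in[n]$ with $\{x_i,x_k\}\notin E(G)$ one has $N_G(x_k)\nsubseteq N_G(x_i)$; by $k$-regularity this containment would again be an equality $N_G(x_k)=N_G(x_i)$. So the whole lemma comes down to the following claim: in a connected $k$-regular graph on $n\geq 3$ vertices, if $x_i,x_k$ are nonadjacent with $N_G(x_i)=N_G(x_k)$, then the conclusion of the lemma still holds.

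The honest approach, and the one I would write up, is therefore to mimic the proof of Lemma~\ref{noinclusion} but replace the appeal to Lemma~\ref{noinclusion1} with a direct argument that still works in the presence of false twins. By Lemma~\ref{noinclusion1}, every singleton $\{i\}$ for which no nonadjacent $x_k$ has $N_G(x_k)\subseteq N_G(x_i)$ is $\rho_{(G,\mathfrak{c})}$-closed and $\rho_{(G,\mathfrak{c})}$-inseparable. For the remaining indices $i$ — those having a false twin — I would argue that one can still produce a $\rho_{(G,\mathfrak{c})}$-closed $\rho_{(G,\mathfrak{c})}$-inseparable singleton containing enough information, or alternatively show these exceptional cases cannot arise: if $x_i$ has a false twin $x_k$, then since $G$ is connected on $n\geq 3$ vertices, $\deg_G(x_i)=k\geq 1$; if $k=1$ then $x_i,x_k$ share their single neighbor $x_\ell$ and connectedness plus $n\geq 3$ is already impossible unless $n=3$ with $G$ a path, a case we can treat by hand (or note $P_3=K_{1,2}$ and apply earlier results); if $k\geq 2$, I would show via Lemma~\ref{rhomin} and the bound argument used repeatedly above (swapping an edge to move degree onto $x_i$) that $\{i\}$ is nonetheless $\rho_{(G,\mathfrak{c})}$-closed, since any monomial $u\in\Bc(G,\mathfrak{c})$ achieving $\deg_{x_i}(u)=\rho_{(G,\mathfrak{c})}(\{i\})$ and not divisible by a chosen $x_j$ admits an edge-swap raising $\rho_{(G,\mathfrak{c})}(\{i,j\})$ strictly, exactly as in Lemma~\ref{noinclusion1}; the place in that proof using $N_G(x_k)\nsubseteq N_G(x_i)$ is only needed to find $x_q\in N_G(x_j)\setminus N_G(x_i)$, and when $x_i$ has a false twin $x_k$ one can instead route the argument through $x_k$ itself. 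Once every singleton is shown $\rho_{(G,\mathfrak{c})}$-closed and $\rho_{(G,\mathfrak{c})}$-inseparable, Lemma~\ref{criterion} gives $\rho_1=\cdots=\rho_n\in\{1,2\}$, and the final paragraph of the proof of Lemma~\ref{noinclusion} — using Lemma~\ref{rhomin} together with $k\geq 2$, i.e.\ no vertex is a leaf, to rule out $\rho_i=\sum_{x_k\in N_G(x_i)}c_k$ — transfers verbatim to conclude $c_i=\rho_i$ for all $i$, hence $c_1=\cdots=c_n=1$ or $c_1=\cdots=c_n=2$.

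The main obstacle is precisely the false-twin case: regularity does not by itself exclude pairs of nonadjacent vertices with identical neighborhoods, so one cannot cite Lemma~\ref{noinclusion} as a black box. I expect the cleanest fix is the observation that a $k$-regular connected graph on $n\geq 3$ vertices with $k\geq 2$ has no leaves, so the final contradiction step of Lemma~\ref{noinclusion} ($\rho_i\geq c_{k_1}+c_{k_2}=\rho_{k_1}+\rho_{k_2}$, impossible when all $\rho$'s are equal) goes through regardless of whether the singletons are closed via Lemma~\ref{noinclusion1} or via the direct edge-swap argument; and the only $k$-regular connected graph on $n\geq 3$ vertices with $k=1$ does not exist (a $1$-regular graph is a disjoint union of edges, hence disconnected once $n\geq 3$), so that degenerate subcase is vacuous. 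Assembling these pieces yields the lemma.
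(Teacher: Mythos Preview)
Your approach has a genuine gap at the false-twin step. You assert that when $x_i$ has a nonadjacent false twin $x_j$ (so $N_G(x_j)=N_G(x_i)$), the singleton $\{i\}$ is ``nonetheless $\rho_{(G,\mathfrak{c})}$-closed'' via an edge-swap routed through the twin. This is simply false as a structural claim about regular graphs. Take $G=K_{3,3}$ with bipartition $\{x_1,x_2,x_3\}\sqcup\{x_4,x_5,x_6\}$ and $\mathfrak{c}=(c_1,1,1,1,1,1)$ with $c_1\geq 3$. Then $\delta_{\mathfrak{c}}(I(G))=3$, and $\Bc(G,\mathfrak{c})$ consists of the monomials $u\,x_4x_5x_6$ with $u$ a $(c_1,1,1)$-bounded monomial of degree $3$ in $x_1,x_2,x_3$. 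One computes $\rho_{(G,\mathfrak{c})}(\{1\})=3$ (from $x_1^3x_4x_5x_6$) and $\rho_{(G,\mathfrak{c})}(\{1,2\})=3$ as well, so $\{1\}$ is \emph{not} $\rho_{(G,\mathfrak{c})}$-closed. The edge-swap in Lemma~\ref{noinclusion1} genuinely requires a vertex $x_q\in N_G(x_j)\setminus N_G(x_i)$; when the neighborhoods coincide there is no such $x_q$, and nothing about ``routing through $x_k$'' produces one. Since your argument for closedness of singletons nowhere uses the Gorenstein hypothesis, it cannot succeed.

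The paper handles this by an honest case split. Case~1 is yours: if every $\{i\}$ is $\rho_{(G,\mathfrak{c})}$-closed, Lemma~\ref{criterion} gives $\rho_1=\cdots=\rho_n\in\{1,2\}$ and the no-leaf argument finishes. Case~2 assumes some $\{i\}$ is not closed, takes a maximal $A\ni i$ with $\rho_{(G,\mathfrak{c})}(A)=\rho_i$, and shows every $j\in A$ satisfies $N_G(x_j)=N_G(x_i)$; hence $|A|\leq k$, with equality forcing $G=K_{k,k}$ (handled by Theorem~\ref{completebipartite}). For $|A|<k$ one proves $\rho_i\geq k$ by showing that in the extremal monomial every edge touching $N_G(x_i)$ must also touch $x_i$, then applies Lemma~\ref{criterion} to the closed inseparable set $A$ to force $\rho_i=k$, $|A|=k-1$, and $c_r=1$ for every $x_r\in N_G(x_i)$; the resulting closed inseparable singleton $\{r\}$ with $\rho_r=1$ contradicts Lemma~\ref{criterion}. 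The Gorenstein hypothesis is used essentially in this Case~2 contradiction and cannot be replaced by a blanket closedness claim.
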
 

\begin{proof}
Let $V(G)=\{x_1, \ldots, x_n\}$ and set $\rho_i:=\rho_{(G,\mathfrak{c})}(\{i\})$, for each $i=1, \ldots, n$. It follows from $n\geq 3$ that $G\neq K_2$ and so, $k\geq 2$. We consider the following two cases.

\medskip

{\bf Case 1.} Suppose that every singleton $\{i\}$ is $\rho_{(G,\mathfrak{c})}$-closed. Since every singleton is $\rho_{(G,\mathfrak{c})}$-inseparable, we conclude from Lemma \ref{criterion} that either, $\rho_1=\cdots = \rho_n = 1$ or $\rho_1=\cdots = \rho_n = 2$. We show that $\rho_i=c_i$, for each $i\in [n]$, and this completes the proof in this case. If $c_i\leq \sum_{x_t\in N_G(x_i)}c_t$, then the assertion follows from Lemma \ref{rhomin}. So, suppose that $c_i> \sum_{x_t\in N_G(x_i)}c_t$. Again using Lemma \ref{rhomin}, we deduce that $\rho_t=c_t$, for each integer $t$ with $x_t\in N_G(x_i)$. Moreover, $\rho_i=\sum_{x_t\in N_G(x_i)}c_t$. Since $k\geq 2$, there are two distinct vertices $x_{t_1},x_{t_2}\in N_G(x_i)$. It follows that$$\rho_i=\sum_{x_t\in N_G(x_i)}c_t\geq c_{t_1}+c_{t_2}=\rho_{t_1}+\rho_{t_2}.$$This is a contradiction, as $\rho_1=\cdots =\rho_n$.

\medskip

{\bf Case 2.} Suppose that there is $i\in [n]$ for which $\{i\}$ is not $\rho_{(G,\mathfrak{c})}$-closed. Then there is a maximal subset $A\subset [n]$ containing $i$ with $\rho_{(G,\mathfrak{c})}(A)=\rho_i$.  In particular, $A$ is a $\rho_{(G,\mathfrak{c})}$-closed subset of $[n]$ and $|A|\geq 2$. Let $j\in A$ with $j\neq i$. Also, let $u\in \Bc(G,\mathfrak{c})$ be a monomial with ${\rm deg}_{x_i}(u)=\rho_i$. Since $\rho_{(G,\mathfrak{c})}(A)=\rho_i$, we deduce that $\rho_{(G,\mathfrak{c})}(\{i,j\})=\rho_i$. The same argument as in the proof of Lemma \ref{noinclusion1} guarantees that $N_G(x_j)\subseteq N_G(x_i)$. Since $G$ is a $k$-regular graph, it follows that for any $j\in A$, the equality $N_G(x_j)=N_G(x_i)$ holds. In particular, $A$ is an independent set of $G$. Moreover, as the degree of every vertex in $N_G(x_i)$ is $k$, one has $|A|\leq k$. If $|A|=k$, then connectedness of $G$ says that $G=K_{k,k}$. Hence, Theorem \ref{completebipartite} implies that either $\rho_1=\cdots =\rho_n=1$ or $\rho_1=\cdots =\rho_n=2$. Then the same argument as in Case 1 yields that $c_i=\rho_i$, for each $i\in [n]$. So, suppose that $|A| < k$.

\medskip

{\bf Claim.} $\rho_i\geq k$.

\medskip

{\it Proof of the claim.} Set $\delta:=\delta_{\mathfrak{c}}(I(G))$. Since $u\in \Bc(G,\mathfrak{c})$, we can write $u=e_1\cdots e_{\delta}$, where $e_1, \ldots, e_{\delta}$ are edges of $G$. Recall that in the the preceding paragraph, we proved that $N_G(x_j)=N_G(x_i)$, for every $j\in A$. Moreover, for each $j\in A$ with $j\neq i$, we have $\rho_{(G,\mathfrak{c})}(\{i,j\})=\rho_i$. Thus, $x_j$ does not divide $u$. Consider a vertex $x_r\in N_G(x_j)=N_G(x_i)$. If ${\rm deg}_{x_r}(u)< c_r$, then $(x_jx_r)u\in (I(G)^{\delta+1})_{\mathfrak{c}}$ which is a contradiction. This contradiction shows that for any vertex $x_r\in N_G(x_i)$, we have ${\rm deg}_{x_r}(u)=c_r$. Fix a vertex $x_r\in N_G(x_i)$. It follows that $x_r$ divides $u$. If in the representation of $u$ as $u=e_1\cdots e_{\delta}$, there is an edge, say $e_1$, which is incident to $x_r$ but not to $x_i$, then $e_1=\{x_r, x_{r'}\}$ for some vertex $x_{r'}\in V(G)\setminus \{x_i\}$. Consequently, $$\frac{ux_j}{x_{r'}}=(x_jx_r)e_2\cdots e_{\delta}\in \Bc(G,\mathfrak{c}).$$Thus,
\begin{align*}
\rho_{(G,\mathfrak{c})}(\{i,j\})& \geq {\rm deg}_{x_i}(ux_j/x_{r'})+{\rm deg}_{x_j}(ux_j/x_{r'})>{\rm deg}_{x_i}(ux_j/x_{r'})\\ &={\rm deg}_{x_i}(u)=\rho_i,
\end{align*}
which is a contradiction. So, for any edge $\ell=1, \ldots, \delta$, if $x_r\in e_{\ell}$, then $e_{\ell}=\{x_i,x_r\}$. Therefore,
\begin{eqnarray}
\label{formula2}
\rho_i={\rm deg}_{x_i}(u)=\sum_{x_r\in N_G(x_i)}{\rm deg}_{x_r}(u)=\sum_{x_r\in N_G(x_i)}c_r.
\end{eqnarray}
Since ${\rm deg}_G(x_i)=k$, we conclude from the above equalities that $\rho_i\geq k$. This completes the proof of the claim. 

\medskip

Next, we show that $A$ is a $\rho_{(G,\mathfrak{c})}$-inseparable subset of $[n]$. Indeed suppose that $A_1$ and $A_2$ are disjoint subsets of $A$ with $A_1\cup A_2=A$.  We may assume that $i\in A_1$. Therefore, $\rho_{(G,\mathfrak{c})}(A_1)=\rho_i$. Consequently,$$\rho_{(G,\mathfrak{c})}(A_1)+\rho_{(G,\mathfrak{c})}(A_2)> \rho_i=\rho_{(G,\mathfrak{c})}(A).$$Therefore, $A$ is a $\rho_{(G,\mathfrak{c})}$-inseparable subset of $[n]$. Since $\conv(\Dc(G,\mathfrak{c}))$ is  Gorenstein, we conclude from Lemma \ref{criterion} and the inequality $|A|<k$ that $\rho_{(G,\mathfrak{c})}(A)\leq k$. Since $\rho_{(G,\mathfrak{c})}(A)=\rho_i$, it follows from that claim that $\rho_{(G,\mathfrak{c})}(A)=k$.  As $|A|<k$ and $\rho_{(G,\mathfrak{c})}(A)=k$, it follows from Lemma \ref{criterion} that $|A|=k-1$. Moreover, (\ref{formula2}) implies that $c_r=1$, for each integer $r$ with $x_r\in N_G(x_i)$. It follows that the singleton $\{r\}$ is a $\rho_{(G,\mathfrak{c})}$-closed subset of $[n]$. Obviously, it is $\rho_{(G,\mathfrak{c})}$-inseparable too. This contradicts Lemma \ref{criterion}, as $A$ is another $\rho_{(G,\mathfrak{c})}$-closed and $\rho_{(G,\mathfrak{c})}$-inseparable subset of $[n]$ with $|A|=k-1$ and $\rho_{(G,\mathfrak{c})}(A)=k$.
\end{proof}

We are now ready to characterize Gorenstein polytopes arising from regular bipartite graphs.

\begin{Theorem}
    \label{regularTH}  
The Gorenstein polytopes of the form $\conv(\Dc(G,\mathfrak{c}))$, where $G$ is a connected regular bipartite graph on $n\geq 3$ vertices and where $\mathfrak{c}\in (\ZZ_{>0})^n$, are exactly ${\Qc}'_n + (1,\ldots, 1)$ and $\Qc_n$.  
\end{Theorem}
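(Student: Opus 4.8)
The plan is to combine Lemma~\ref{regular} with a case analysis on the two constant vectors. By Lemma~\ref{regular}, if $\conv(\Dc(G,\mathfrak{c}))$ is Gorenstein for a connected $k$-regular (in particular bipartite) graph $G$ on $n\geq 3$ vertices, then either $c_1=\cdots=c_n=1$ or $c_1=\cdots=c_n=2$. Thus it suffices to treat these two cases separately, and in each to show that the polytope is forced to be $\Qc_n$ (resp.\ $\Qc'_n+(1,\dots,1)$) \emph{and} that it actually is Gorenstein. The ``actually is'' direction is the easy half: a connected $k$-regular bipartite graph has a perfect matching (by K\"onig's theorem / Hall's condition, since the two sides of the bipartition have equal size $n/2$ when $G$ is regular and both sides are matched into), so Example~\ref{perfectmatching} gives $\conv(\Dc(G,\mathfrak{c}))=\Qc_n$ when $\mathfrak{c}=(1,\dots,1)$, and Example~\ref{2...2} gives $\conv(\Dc(G,\mathfrak{c}))=\Qc'_n+(1,\dots,1)$ when $\mathfrak{c}=(2,\dots,2)$. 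In particular these polytopes really do occur, for every such $G$.

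First I would record that a connected $k$-regular bipartite graph $G$ on $n$ vertices has a perfect matching: writing $V(G)=U\sqcup W$ for the bipartition, $k|U|=|E(G)|=k|W|$ forces $|U|=|W|=n/2$, and regularity makes Hall's condition automatic, so K\"onig's theorem yields a perfect matching (in particular $n$ is even). Then in the case $\mathfrak{c}=(1,\dots,1)$ I invoke Example~\ref{perfectmatching} verbatim to conclude $\delta_{\mathfrak{c}}(I(G))=n/2$, $\Bc(G,\mathfrak{c})=\{x_1\cdots x_n\}$, and $\conv(\Dc(G,\mathfrak{c}))=\Qc_n$, which is Gorenstein. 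In the case $\mathfrak{c}=(2,\dots,2)$ I invoke Example~\ref{2...2} (whose hypothesis ``$G$ has a perfect matching'' is now met) to get $\delta_{\mathfrak{c}}(I(G))=n$, $\Bc(G,\mathfrak{c})=\{x_1^2\cdots x_n^2\}$, and $\conv(\Dc(G,\mathfrak{c}))=\Qc'_n+(1,\dots,1)$, which is Gorenstein.

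Putting the two directions together gives exactly the asserted classification: if $\conv(\Dc(G,\mathfrak{c}))$ is Gorenstein then $\mathfrak{c}$ is one of the two constant vectors by Lemma~\ref{regular}, and then the polytope is $\Qc_n$ or $\Qc'_n+(1,\dots,1)$ by the Examples; conversely both of these arise from every connected regular bipartite $G$ (indeed both are Gorenstein). I expect the only genuine content beyond citing Lemma~\ref{regular} and the Examples to be the perfect-matching observation, and even that is classical (and implicitly already used in Lemma~\ref{regular}'s proof via the subcase $G=K_{k,k}$); so the ``main obstacle'' is merely bookkeeping — making sure the hypotheses of Examples~\ref{perfectmatching} and \ref{2...2} are verified (connectedness plus regularity of a bipartite graph implies a perfect matching) and that no other value of $\mathfrak{c}$ slips through, which is precisely what Lemma~\ref{regular} rules out.
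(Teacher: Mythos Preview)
Your proposal is correct and follows essentially the same approach as the paper: reduce to $\mathfrak{c}\in\{(1,\ldots,1),(2,\ldots,2)\}$ via Lemma~\ref{regular}, use the classical fact that a connected regular bipartite graph has a perfect matching, and then invoke Examples~\ref{perfectmatching} and~\ref{2...2} to identify the two resulting polytopes as $\Qc_n$ and $\Qc'_n+(1,\ldots,1)$. Your write-up is in fact slightly more explicit than the paper's in justifying the perfect-matching step.
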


\begin{proof}  
Recall that a regular bipartite graph has a perfect matching.  Set $\mathfrak{c_1}:=(1,1, \ldots, 1)\in (\ZZ_{>0})^n$ and $\mathfrak{c_2}:=(2,2, \ldots, 2)\in (\ZZ_{>0})^n$. Since $\conv(\Dc(G,\mathfrak{c}))$ is Gorenstein, it follows from Lemma \ref{regular} that either $\mathfrak{c}=\mathfrak{c_1}$ or $\mathfrak{c}=\mathfrak{c_2}$. The existence of perfect matching guaranrtees that $\Bc(G,\mathfrak{c_1}) = \{x_1x_2\cdots x_n\}$ and $\Bc(G,\mathfrak{c_2}) = \{x_1^2x_2^2\cdots x_n^2\}$. Hence, $\conv(\Dc(G,\mathfrak{c_1})) = \Qc_n$ and $\conv(\Dc(G,\mathfrak{c_2})) = \Qc'_n+(1,\ldots,1)$.   
\end{proof}

\begin{Example}
    Let $G$ be a connected regular non-bipartite graph on $n$ vertices and $\mathfrak{c}=(1,1, \ldots, 1)\in (\ZZ_{>0})^n$.  Then $\conv(\Dc(G,\mathfrak{c}))$ might not be Gorenstein, e.g., $G=C_5$ (Example \ref{cycle}).  Furthermore, even if $\conv(\Dc(G,\mathfrak{c}))$ is Gorenstein, $\conv(\Dc(G,\mathfrak{c}))$ might not be equal to $\Qc_n$, e.g., $G=C_3$ (Example \ref{ham}). 
\end{Example}

However, for $\mathfrak{c}=(2,2, \ldots, 2)\in (\ZZ_{>0})^n$, we have the following theorem.

\begin{Theorem} 
    \label{regnonbip}
Let $G$ be a (not necessarily bipartite) regular graph on $n$ vertices. Then for the vector $\mathfrak{c}=(2, 2, \ldots, 2)\in (\ZZ_{>0})^n$, the lattice polytope $\conv(\Dc(G,\mathfrak{c}))$ is $\Qc'_n+(1,\ldots,1)$. In particular, $\conv(\Dc(G,\mathfrak{c}))$ is Gorenstein.
\end{Theorem}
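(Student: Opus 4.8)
The plan is to reduce the whole statement to the single assertion that $x_1^2x_2^2\cdots x_n^2\in I(G)^n$; once this is in hand, the argument is identical to the one in Example~\ref{2...2}. First I would record the trivial upper bound $\delta_{\mathfrak{c}}(I(G))\le n$: every monomial in $(I(G)^q)_{\mathfrak{c}}$ is a product of $q$ edges of $G$, hence has degree $2q$, whereas a $\mathfrak{c}$-bounded monomial (for $\mathfrak{c}=(2,\ldots,2)$) has degree at most $2n$; thus $2q\le 2n$.

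For the reverse inequality I must exhibit a product of $n$ edges of $G$, repetitions allowed, equal to $x_1^2\cdots x_n^2$ — equivalently, a spanning sub-multigraph of $G$, using each edge with multiplicity at most $2$, in which every vertex has degree exactly $2$. Let $k\ge 1$ be the common degree of the vertices of $G$, and let $2G$ be the multigraph obtained from $G$ by replacing each edge with two parallel edges, so that $2G$ is $2k$-regular. By Petersen's $2$-factor theorem (applied to each connected component), $2G$ decomposes into edge-disjoint $2$-factors; fix any such $2$-factor $F$. Then $F$ has exactly $n$ edges, each a copy of an edge of $G$ (so each edge of $G$ used at most twice), and $\deg_F(x_i)=2$ for every $i$, whence $\prod_{e\in F}e=x_1^2\cdots x_n^2\in I(G)^n$. (If one prefers to avoid Petersen: orient one copy of each edge according to a fixed orientation of $G$ and the other copy reversed; this makes $2G$ an Eulerian digraph with equal in- and out-degree $k$ at every vertex, and König's theorem applied to the associated $k$-regular bipartite graph produces a permutation $\sigma$ of $V(G)$ with $\{v,\sigma(v)\}\in E(G)$ for all $v$; the arcs $v\to\sigma(v)$ give the required $2$-regular spanning sub-multigraph.)

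Combining the two bounds yields $\delta_{\mathfrak{c}}(I(G))=n$. The generators of $(I(G)^n)_{\mathfrak{c}}$ are the $\mathfrak{c}$-bounded monomials lying in $I(G)^n$, all of degree $2n$, and the only $\mathfrak{c}$-bounded monomial of degree $2n$ is $x_1^2\cdots x_n^2$; hence $\Bc(G,\mathfrak{c})=\{x_1^2\cdots x_n^2\}$. Therefore $\Mc(G,\mathfrak{c})$ is the set of all monomials dividing $x_1^2\cdots x_n^2$, so $\Dc(G,\mathfrak{c})=\{0,1,2\}^n$ and $\conv(\Dc(G,\mathfrak{c}))=[0,2]^n=2\Qc_n=\Qc'_n+(1,\ldots,1)$. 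Since $\Qc'_n$ is reflexive and $\big(\Qc'_n+(1,\ldots,1)\big)-(1,\ldots,1)=\Qc'_n$, the polytope $\conv(\Dc(G,\mathfrak{c}))$ is Gorenstein, as already observed in the introduction.

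The only genuine obstacle is the existence of the $2$-factor of $2G$, i.e.\ the membership $x_1^2\cdots x_n^2\in I(G)^n$. This does \emph{not} follow from Example~\ref{2...2} directly, because a regular graph need be neither Hamiltonian nor possess a perfect matching; what makes the argument go through is that passing from $G$ to $2G$ always produces an even-regular multigraph, so that Petersen's theorem (or the orientation/König argument) applies uniformly, including when the regularity $k$ of $G$ is odd.
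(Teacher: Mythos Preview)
Your proof is correct. The overall architecture matches the paper's: reduce to showing $x_1^2\cdots x_n^2\in I(G)^n$, then conclude $\Bc(G,\mathfrak{c})=\{x_1^2\cdots x_n^2\}$ and identify the polytope with $\Qc'_n+(1,\ldots,1)$.

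Where you diverge is in producing the $2$-regular spanning sub-multigraph. The paper splits on the parity of $k$: for $k$ even it applies Petersen's $2$-factor theorem directly to $G$ to get a spanning disjoint union of cycles, while for $k$ odd it invokes a result of Kano (that an odd-regular graph has a spanning subgraph whose components are single edges or cycles) and then squares the edge-components. Your passage to the multigraph $2G$, which is $2k$-regular for every $k$, lets Petersen apply uniformly and removes both the case distinction and the dependence on Kano's theorem; the alternative König/orientation argument you sketch is another self-contained route to the same conclusion. Either way your argument is slightly cleaner than the paper's, at the cost of needing Petersen for multigraphs (which is immediate from the Eulerian-circuit proof).
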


\begin{proof}
Assume that $G$ is a $k$-regular graph on vertex set $V(G)=\{x_1, \ldots, x_n\}$. We claim that $\Bc(G,\mathfrak{c}) = \{x_1^2x_2^2\cdots x_n^2\}$. To prove the claim it is enough to prove that $x_1^2x_2^2\cdots x_n^2\in \Bc(G,\mathfrak{c})$. If $k$ is even, then by Petersen's 2-factor theorem \cite[Page 166]{bcl}, the graph $G$ has a spanning subgraph $H$ which is disjoint union of cycles. Thus,$$x_1^2x_2^2\cdots x_n^2=\prod_{\{x_i,x_j\}\in E(H)}(x_ix_j)\in \Bc(G,\mathfrak{c}).$$If $k$ is odd, then it follows from \cite[Theorem 1]{k} that $G$ has a spanning subgraph $H$ such that every connected component of $H$ is either an edge or a cycle. Assume that $H_1, \ldots, H_s$ are those connected components of $H$ which are an edge and let $H_{s+1}, \ldots, H_t$ be the connected components of $H$ which are cycles. Then$$x_1^2x_2^2\cdots x_n^2=\Big(\prod_{\ell=1}^s\prod_{\{x_i,x_j\}\in E(H_{\ell})}(x_ix_j)^2\Big)\Big(\prod_{\ell=s+1}^t\prod_{\{x_i,x_j\}\in E(H_{\ell})}(x_ix_j)\Big)\in \Bc(G,\mathfrak{c}).$$Thus, $\Bc(G,\mathfrak{c}) = \{x_1^2x_2^2\cdots x_n^2\}$. Hence, $\conv(\Dc(G,\mathfrak{c})) = \Qc'_n+(1,\ldots,1)$.   
\end{proof}

\section{Whiskered graphs} \label{sec8}
Recall that every finite graph to be discussed in the present paper has no isolated vertices.  Let $G$ be a finite graph on $\{x_1, \ldots, x_n\}$.  The {\em whiskered graph} of $G$ is the finite graph $W(G)$ on $\{x_1, \ldots, x_{2n}\}$ obtained from $G$ by adding the edges $\{x_i, x_{n+i}\}$ for $1\leq i \leq n$.    

\begin{Lemma}
\label{whiskerLEMMA}
Let $G$ be a finite graph on $n$ vertices $x_1, \ldots, x_n$ and $\mathfrak{c} \in (\ZZ_{>0})^{2n}$.  Then  $\conv(\Dc(W(G),\mathfrak{c}))$ is Gorenstein if and only if one of the following conditions holds:
\begin{itemize}
 \item[(i)] 
$c_1=\cdots=c_n=1$ and $c_{n+i}\geq 1$ for each $i=1, \ldots, n$;
\item[(ii)]
$c_1=\cdots =c_n=2$ and $c_{n+i}\geq 2$ for each $i=1, \ldots, n$.
\end{itemize}
\end{Lemma}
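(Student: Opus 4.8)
The plan is to prove both directions using the criterion in Lemma \ref{criterion}, after pinning down the relevant closed and inseparable subsets of $[2n]$ for the whiskered graph $W(G)$. The key structural feature of $W(G)$ is that each whisker vertex $x_{n+i}$ is a leaf whose unique neighbor is $x_i$, and $x_i$ satisfies $x_{n+i} \in N_{W(G)}(x_i)$; moreover for any nonadjacent pair $x_i, x_j$ (with $i,j \in [n]$), the whisker vertex $x_{n+j}$ lies in $N_{W(G)}(x_j) \setminus N_{W(G)}(x_i)$, so Lemma \ref{noinclusion1} applies and shows that $\{i\}$ is $\rho_{(W(G),\mathfrak{c})}$-closed and inseparable for every $i \in [n]$. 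Similarly each singleton $\{n+i\}$ is closed and inseparable because the only vertex $x_k$ with $\{x_{n+i},x_k\}\notin E(W(G))$ whose neighborhood could be contained in $N_{W(G)}(x_{n+i})=\{x_i\}$ would have to be a leaf attached to $x_i$; but the only such leaf is $x_{n+i}$ itself — so the hypothesis of Lemma \ref{noinclusion1} holds vacuously and $\{n+i\}$ is closed. Hence \emph{all} $2n$ singletons are $\rho_{(W(G),\mathfrak{c})}$-closed and inseparable.

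Next I would identify the possible values of the $\rho_i := \rho_{(W(G),\mathfrak{c})}(\{i\})$. Applying Lemma \ref{criterion}, if $\conv(\Dc(W(G),\mathfrak{c}))$ is Gorenstein then there is $k>0$ with $\rho_i = (|\{i\}|+1)/k = 2/k$ for all $i\in[2n]$, so either all $\rho_i = 1$ (when $k=2$) or all $\rho_i = 2$ (when $k=1$). For the whisker vertex, Lemma \ref{rhomin} gives $\rho_{n+i} = \min\{c_{n+i},\, c_i\}$, and for the base vertex $\rho_i = \min\{c_i,\, \sum_{x_k \in N_{W(G)}(x_i)} c_k\} = \min\{c_i,\, c_{n+i} + \sum_{x_k \in N_G(x_i)} c_k\}$. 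From $\rho_{n+i} = \min\{c_i, c_{n+i}\} \in \{1,2\}$ together with $\rho_i = \min\{c_i, \ldots\} \in \{1,2\}$ and $c_i \ge \rho_i$, I would deduce first that $c_i = \rho_i$: indeed if $c_i > \rho_i$ then $\rho_i = c_{n+i} + \sum_{x_k\in N_G(x_i)} c_k$, but also $\rho_{n+i} = \min\{c_i, c_{n+i}\}$, and comparing these forces $c_{n+i} \le \rho_i \le \rho_{n+i} + \sum_{N_G(x_i)} c_k$; chasing the equalities (using that $G$ has no isolated vertex so $\sum_{N_G(x_i)} c_k \ge 1$, and that all $\rho$-values are equal) yields a contradiction exactly as in the proof of Lemma \ref{noinclusion}. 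So $c_1 = \cdots = c_n = \rho_1 \in \{1,2\}$, and then $\rho_{n+i} = \min\{c_i, c_{n+i}\} = \rho_1$ forces $c_{n+i} \ge \rho_1$ for each $i$. This gives the necessity of (i) or (ii).

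For the converse I would check directly that under (i) (resp. (ii)) the polytope is Gorenstein. Under (i), the bound $\mathfrak{c}$ restricts each base variable to exponent $\le 1$, and since every edge incident to $x_i$ can be replaced by the whisker edge $\{x_i,x_{n+i}\}$, one sees that $\delta_{\mathfrak{c}}(I(W(G))) = n$ and $\Bc(W(G),\mathfrak{c})$ consists precisely of the monomials $x_1 x_2 \cdots x_n \cdot w$ where $w$ is a monomial in the whisker variables $x_{n+1},\dots,x_{2n}$ — more carefully, each generator is a product $\prod_{i=1}^n f_i$ with $f_i$ an edge at $x_i$, and the degree in $x_{n+i}$ is either $0$ or $1$; analysing which configurations are maximal shows $\rho_{(W(G),\mathfrak{c})}(X) = |X \cap [n]| + $ (a whisker contribution). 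The cleanest route: show $\rho_{(W(G),\mathfrak{c})}(X) = |X|$ for every $X$ not containing a "full whisker block", so that the only closed-and-inseparable sets are the singletons, all with $\rho = 1 = (|X|+1)/2$; then Lemma \ref{criterion} with $k=2$ applies. Case (ii) is handled the same way with $k=1$, using Theorem \ref{regnonbip}-style arguments (or a direct $2$-factor/edge decomposition) to see $\Bc(W(G),\mathfrak{c})$ has the analogous description with all relevant $\rho$-values equal to $2$.

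The main obstacle I anticipate is the precise determination of $\Bc(W(G),\mathfrak{c})$ and hence of the ground set rank function $\rho_{(W(G),\mathfrak{c})}(X)$ for \emph{general} $X$ under hypotheses (i) and (ii): one must verify there are no unexpected closed-and-inseparable subsets of size $\ge 2$ (which would violate the Gorenstein criterion). The key point making this tractable is that whenever $x_j$ fails to divide a chosen generator $u$ with $\deg_{x_i}(u)$ maximal, the whisker edge $\{x_j, x_{n+j}\}$ can be swapped in (because $\deg_{x_{n+j}}(u)$ is bounded strictly below $c_{n+j}$ whenever $\deg_{x_j}(u) < c_j$), exactly as in the exchange arguments of Lemmas \ref{rhomin} and \ref{noinclusion1}; this forces $\rho_{(W(G),\mathfrak{c})}(\{i,j\}) > \rho_i$ and rules out larger closed inseparable sets. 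I would organize the converse as: (a) compute $\delta_{\mathfrak{c}}$ and describe $\Bc$; (b) show every singleton is closed/inseparable with common $\rho$-value $r\in\{1,2\}$; (c) show every set of size $\ge 2$ is either not closed or separable, via the whisker-swap exchange; (d) conclude by Lemma \ref{criterion} with $k = 2/r$.
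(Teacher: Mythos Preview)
There is a genuine gap in your first paragraph. You claim Lemma \ref{noinclusion1} shows that $\{i\}$ is $\rho_{(W(G),\mathfrak{c})}$-closed for every $i\in[n]$, but you only verified its hypothesis for nonadjacent \emph{base} vertices $x_j$. You overlooked the whisker vertices $x_{n+j}$ with $j\neq i$ and $x_j\in N_G(x_i)$: for these, $N_{W(G)}(x_{n+j})=\{x_j\}\subseteq N_{W(G)}(x_i)$, so the hypothesis of Lemma \ref{noinclusion1} fails. And the conclusion can genuinely fail too: take $G=K_2$ (so $W(G)=P_4$) and $\mathfrak{c}=(10,1,1,1)$; then $\Bc(W(G),\mathfrak{c})=\{x_1^2x_2x_3,\,x_1x_2x_3x_4\}$, giving $\rho(\{1\})=\rho(\{1,4\})=2$, so $\{1\}$ is not closed. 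Thus your shortcut ``all $2n$ singletons are closed, apply Lemma \ref{criterion}'' is not available, and the rest of your necessity argument collapses.

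What the paper does---and what your plan is missing---is precisely the analysis of this bad case. Assuming some $\{i\}$ with $i\in[n]$ is not closed, one chooses a maximal $A\ni i$ with $\rho_{(W(G),\mathfrak{c})}(A)=\rho_i$, proves (Claim 1) that $A=\{i\}\cup B$ with $\emptyset\neq B\subseteq\{n+1,\ldots,2n\}\setminus\{n+i\}$, then (Claim 2) that $\rho_{(W(G),\mathfrak{c})}(A)\ge c_{n+i}+\sum_{n+k\in B}c_k\ge |A|$, and finally uses that $A$ is closed and inseparable together with the (correctly) closed singleton $\{n+i\}$ to force a contradiction with Lemma \ref{criterion}. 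This exchange argument on $A$ is the substantive content of the lemma; once it is done, your Case-1 style reasoning finishes the necessity. For the converse, your sketch is more complicated than needed and your description of $\Bc$ is off: under (i) (resp.\ (ii)) the perfect matching $\{\{x_i,x_{n+i}\}:i\in[n]\}$ and the constraint $c_i\le 1$ (resp.\ $\le 2$) on base vertices force $\Bc(W(G),\mathfrak{c})=\{x_1\cdots x_{2n}\}$ (resp.\ $\{x_1^2\cdots x_{2n}^2\}$), so the polytope is $\Qc_{2n}$ (resp.\ $\Qc'_{2n}+(1,\ldots,1)$) and Gorenstein by Examples \ref{perfectmatching} and \ref{2...2}.
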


\begin{proof}
Suppose that $\conv(\Dc(W(G),\mathfrak{c}))$ is Gorenstein.  Set $\delta:=\delta_{\mathfrak{c}}(I(W(G)))$.  In addition, for each $i=1, \ldots, 2n$, set $\rho_i:=\rho_{(W(G),\mathfrak{c})}(\{i\})$. We consider the following cases.

\medskip

{\bf Case 1.} Suppose that for each $i=1, \ldots, n$, the singleton $\{i\}$ is $\rho_{(W(G),\mathfrak{c})}$-closed. Obviously, every singleton is $\rho_{(W(G),\mathfrak{c})}$-inseparable.  We conclude from Lemma \ref{criterion} that either $\rho_1=\cdots = \rho_n = 1$ or $\rho_1=\cdots = \rho_n = 2$. In the first case, it follows from Lemma \ref{rhomin} that $c_1=\cdots =c_n=1$ (note that ${\rm deg}_{W(G)}(x_i)\geq 2$, for each $i=1, \ldots, n$). So, condition (i) holds.  Assume that $\rho_1=\cdots = \rho_n = 2$. It follows from these equalities that $c_i\geq 2$, for each $i=1, \ldots, n$ and again using Lemma \ref{rhomin}, we deduce that $c_1=\cdots =c_n=2$. We show that $c_{n+i}\geq 2$, for each $i=1, \ldots, n$. By contradiction, suppose that $c_{n+i}=1$, for some integer $i$ with $1\leq i\leq n$. We know from Lemma \ref{noinclusion1} that $\{n+i\}$ is $\rho_{(W(G),\mathfrak{c})}$-closed (and is $\rho_{(W(G),\mathfrak{c})}$-inseparable).  On the other hand, by our assumption, $\{i\}$ is $\rho_{(W(G),\mathfrak{c})}$-closed and $\rho_{(W(G),\mathfrak{c})}$-inseparable. Moreover, $\rho_i\neq \rho_{n+i}$. This contradicts Lemma \ref{criterion}.

\medskip

{\bf Case 2.} Suppose that there is an integer $1\leq i\leq n$ for which the singleton $\{i\}$ is not $\rho_{(W(G),\mathfrak{c})}$-closed. We may choose $i$ such that $\rho_i\leq \rho_t$ for each $t\in [n]$ with the property that the singleton $\{t\}$ is not $\rho_{(W(G),\mathfrak{c})}$-closed. Let $A$ be a maximal subset of $[2n]$ with $i\in A$ and $\rho_{(W(G),\mathfrak{c})}(A)=\rho_i$. In particular, $|A|\geq 2$.

\medskip

{\bf Claim 1.} There is a nonempty subset $B$ of $\{n+1, \ldots, 2n\}$ with $n+i\notin B$ for which $A=B\cup\{i\}$.

\medskip

{\it Proof of Claim 1.} Let $v\in \Bc(W(G),\mathfrak{c})$ be a monomial with ${\rm deg}_{x_i}(v)=\rho_i$. Then $v$ can be written as $v=f_1\cdots f_{\delta}$, where $f_1, \ldots, f_{\delta}$ are edges of $W(G)$. 

We first show that $n+i\notin A$. Indeed, if $x_{n+i}$ divides $v$, then$$\rho_{(W(G),\mathfrak{c})}(\{i,n+i\}) \geq {\rm deg}_{x_i}(v)+{\rm deg}_{x_{n+i}}(v)>{\rm deg}_{x_i}(v)=\rho_i.$$So, in this case, $n+i\notin A$. Therefore, assume that $x_{n+i}$ does not divide $v$. Since ${\rm deg}_{x_i}(v) \geq 1$, in the representation of $v$ as $v=f_1\cdots f_{\delta}$, there is an edge, say $f_1$ which is incident to $x_i$ but not to $x_{n+i}$. In other words, $f_1=\{x_i, x_{i'}\}$ for a vertex $x_{i'}\in V(W(G))\setminus \{x_{n+i}\}$. Consequently,$$\frac{vx_{n+i}}{x_{i'}}=(x_ix_{n+i})f_2\cdots f_{\delta}\in \Bc(W(G),\mathfrak{c}).$$Thus,
\begin{align*}
\rho_{(W(G),\mathfrak{c})}(\{i, n+i\})& \geq {\rm deg}_{x_i}(vx_{n+i}/x_{i'})+{\rm deg}_{x_{n+i}}(vx_{n+i}/x_{i'})\\ &>{\rm deg}_{x_i}(vx_{n+i}/x_{i'})={\rm deg}_{x_i}(v)=\rho_i.
\end{align*}
Hence, $n+i\notin A$.

Next, we show that for each $j\in [n]$ with $j\neq i$, we have $j\notin A$. Indeed, if $x_j$ divides $v$ a similar argument as above shows that $j\notin A$. If $x_j$ does not divide $v$, then $x_{n+j}$ does not divide $v$ and therefore, $(x_jx_{n+j})u\in (I(W(G))^{\delta+1})_{\mathfrak{c}}$ which is a contradiction. Consequently, $j\notin A$.

It follows from the preceding two paragraphs that $A=B\cup\{i\}$, for a subset $B$ of $\{n+1, \ldots, 2n\}$ with $n+i\notin B$. On the other hand, it follows from $|A|\geq 2$ that $B\neq \emptyset$. This completes the proof of Claim 1.

\medskip

{\bf Claim 2.} One has $\rho_{(W(G),\mathfrak{c})}(A)\geq c_{n+i}+\sum_{n+k\in B}c_k$.

\medskip

{\it Proof of Claim 2.} Let $v$ be the monomial defined in the proof of Claim 1. Assume that $n+k\in B\subset A$. Since $\rho_{(W(G),\mathfrak{c})}(\{i, n+k\})=\rho_i$, it follows that $v$ is not divisible by $x_{n+k}$. If ${\rm deg}_{x_k}(v)< c_k$, then  $(x_kx_{n+k})v\in (I(W(G))^{\delta+1})_{\mathfrak{c}}$ which is a contradiction. Thus, ${\rm deg}_{x_k}(v)= c_k$, for each integer $k$ with $n+k\in B$. Assume that in the representation of $v$ as $v=f_1\cdots f_{\delta}$, there is an edge, say $f_{\delta}$ which is incident to $x_k$ but not to $x_i$. Then $f_{\delta}=\{x_k,x_{k'}\}$, for some vertex $x_{k'}\neq x_i$. This yields that$$\frac{vx_{n+k}}{x_{k'}}=f_1\cdots f_{\delta-1}(x_kx_{n+k})\in \Bc(W(G),\mathfrak{c}).$$Thus,
\begin{align*}
\rho_{(W(G),\mathfrak{c})}(\{i, n+k\})& \geq {\rm deg}_{x_i}(vx_{n+k}/x_{k'})+{\rm deg}_{x_{n+k}}(vx_{n+k}/x_{k'})\\ &>{\rm deg}_{x_i}(vx_{n+k}/x_{k'})={\rm deg}_{x_i}(v)=\rho_i,
\end{align*}
which is a contradiction as $\rho_{(W(G),\mathfrak{c})}(\{i, n+k\})=\rho_i$. This contradiction shows that in the representation of $v$ as $v=f_1\cdots f_{\delta}$, if an edge $f_{\ell}$ is incident to $x_k$, it is incident to $x_i$ too. Moreover, if ${\rm deg}_{x_{n+i}}(v)<c_{n+i}$, then for an integer $k$ with $n+k\in B$, $$(x_{n+i}x_{n+k})v=(x_ix_{n+i})(x_kx_{n+k})v/(x_ix_k)\in (I(W(G))^{\delta+1})_{\mathfrak{c}},$$which is a contradiction. Hence, ${\rm deg}_{x_{n+i}}(v)=c_{n+i}$. Since $x_i$ is the unique neighbor of $x_{n+i}$ in $W(G)$, we deduce that$$\rho_{(W(G),\mathfrak{c})}(A)=\rho_i={\rm deg}_{x_i}(v)\geq {\rm deg}_{x_{n+i}}(v)+\sum_{n+k\in B}{\rm deg}_{x_k}(v)=c_{n+i}+\sum_{n+k\in B}c_k.$$This proves Claim 2.

\medskip

We show that $A$ is $\rho_{(W(G),\mathfrak{c})}$-inseparable. Indeed, assume that $A_1$ and $A_2$ are proper disjoint subsets of $A$ with $A_1\cup A_2=A$. We may assume that $i\in A_1$. Then $\rho_{(W(G),\mathfrak{c})}(A_1)=\rho_i$. Hence, $\rho_{(W(G),\mathfrak{c})}(A_1)+\rho_{(W(G),\mathfrak{c})}(A_2)> \rho_{(W(G),\mathfrak{c})}(A)$. Thus, $A$ is a $\rho_{(W(G),\mathfrak{c})}$-inseparable subset of $[2n]$. Moreover, since $A$ is a maximal subset of $[2n]$ with $\rho_{(W(G),\mathfrak{c})}(A)=\rho_i$, we conclude that $A$ is $\rho_{(W(G),\mathfrak{c})}$-closed. It follows from Lemma \ref{noinclusion1} that the singleton $\{n+i\}$ is a $\rho_{(W(G),\mathfrak{c})}$-closed and $\rho_{(W(G),\mathfrak{c})}$-inseparable subset of $[2n]$. Therefore, by Lemma \ref{criterion}, one has either $\rho_{n+i}=1$ or $\rho_{n+i}=2$. However, $\rho_{n+i}=1$ is not possible, as $A$ is $\rho_{(W(G),\mathfrak{c})}$-closed and $\rho_{(W(G),\mathfrak{c})}$-inseparable with $|A|\geq 2$ and $\rho_{(W(G),\mathfrak{c})}(A)\geq |A|$ (Claim 2). So, suppose that $\rho_{n+i}=2$. It follows from Lemma \ref{criterion} that $\rho_{(W(G),\mathfrak{c})}(A)=|A|+1$. Since $c_{n+i}\geq \rho_{n+i}=2$, we deduce from Claim 2 that $c_k=1$, for each integer $k$ with $n+k\in B$. On the other hand, at the beginning of Case 2, we assumed that $\rho_i\leq \rho_t$ for each $t\in [n]$ such that the singleton $\{t\}$ is not $\rho_{(W(G),\mathfrak{c})}$-closed. Since $\rho_i=\rho_{(W(G),\mathfrak{c})}(A)=|A|+1\geq 3$, it follows that the singleton $\{k\}$ is $\rho_{(W(G),\mathfrak{c})}$-closed, for each integer $k$ with $n+k\in B$. Obviously, it is $\rho_{(W(G),\mathfrak{c})}$-inseparable too. This contradicts Lemma \ref{criterion}, as $\{n+i\}$ is $\rho_{(W(G),\mathfrak{c})}$-closed and $\rho_{(W(G),\mathfrak{c})}$-inseparable with $\rho_{n+i}=2$.
\end{proof} 

We are now ready to prove the main result of this section.

\begin{Theorem}
    \label{whiskerTHM}
The Gorenstein polytopes of the form $\conv(\Dc(W(G),\mathfrak{c}))$, where $W(G)$ is the whiskered graph of a finite graph on $n$ vertices and where $\mathfrak{c}\in (\ZZ_{>0})^{2n}$, are exactly $\Qc_{2n}$ and ${\Qc}'_{2n} + (1,\ldots, 1)$. 
\end{Theorem}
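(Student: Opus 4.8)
The plan is to feed Lemma~\ref{whiskerLEMMA} into the argument: if $\conv(\Dc(W(G),\mathfrak{c}))$ is Gorenstein, then either (i) $c_1=\cdots=c_n=1$ with $c_{n+i}\geq 1$ for all $i$, or (ii) $c_1=\cdots=c_n=2$ with $c_{n+i}\geq 2$ for all $i$, and it then remains only to compute $\Bc(W(G),\mathfrak{c})$, and hence $\conv(\Dc(W(G),\mathfrak{c}))$, in each of these two cases. The structural fact that drives everything is that every edge of $W(G)$ is incident to at least one of $x_1,\dots,x_n$: an edge of $G$ has both endpoints among $x_1,\dots,x_n$, while the whisker $\{x_i,x_{n+i}\}$ has the endpoint $x_i$ among them. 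In particular the $n$ whiskers already form a perfect matching of $W(G)$, so no hypothesis on $G$ beyond the standing assumption (no isolated vertex) is needed.

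For case (i): if $u=e_1\cdots e_q\in I(W(G))^q$ is written as a product of $q$ edges, then $\sum_{i=1}^{n}{\rm deg}_{x_i}(u)\geq q$, while if $u$ is $\mathfrak{c}$-bounded then $\sum_{i=1}^{n}{\rm deg}_{x_i}(u)\leq c_1+\cdots+c_n=n$; hence $q\leq n$. Since the product of all $n$ whiskers is $x_1\cdots x_{2n}$, which lies in $I(W(G))^{n}$ and is $\mathfrak{c}$-bounded, we get $\delta_{\mathfrak{c}}(I(W(G)))=n$. Now for $u\in\Bc(W(G),\mathfrak{c})$ the estimate above forces $\sum_{i=1}^{n}{\rm deg}_{x_i}(u)=n$; since an edge of $G$ would contribute $2$ to this sum and a whisker contributes $1$, every $e_\ell$ must be a whisker, and since $c_i=1$ for $i\leq n$ no whisker may be repeated, so $u=x_1\cdots x_{2n}$. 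Thus $\Bc(W(G),\mathfrak{c})=\{x_1\cdots x_{2n}\}$, whence $\Dc(W(G),\mathfrak{c})=\{0,1\}^{2n}$ and $\conv(\Dc(W(G),\mathfrak{c}))=\Qc_{2n}$.

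Case (ii) runs in exact parallel: the same count gives $q\leq c_1+\cdots+c_n=2n$, the monomial $x_1^2\cdots x_{2n}^2$ (the product of the squares of the $n$ whiskers) realizes $\delta_{\mathfrak{c}}(I(W(G)))=2n$, and then $\sum_{i=1}^{n}{\rm deg}_{x_i}(u)=2n$ for $u\in\Bc(W(G),\mathfrak{c})$ again forces all $2n$ edges to be whiskers, each whisker now appearing exactly twice because $c_i=2$ for $i\leq n$, so $u=x_1^2\cdots x_{2n}^2$. Hence $\Bc(W(G),\mathfrak{c})=\{x_1^2\cdots x_{2n}^2\}$, $\Dc(W(G),\mathfrak{c})=\{0,1,2\}^{2n}$, and $\conv(\Dc(W(G),\mathfrak{c}))=2\Qc_{2n}=\Qc'_{2n}+(1,\dots,1)$. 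For the converse, taking $\mathfrak{c}=(1,\dots,1)$ and $\mathfrak{c}=(2,\dots,2)$ (both admissible since $W(G)$ has a perfect matching, cf.\ Examples~\ref{perfectmatching} and \ref{2...2}) shows that $\Qc_{2n}$ and $\Qc'_{2n}+(1,\dots,1)$ do occur and are Gorenstein, which finishes the classification.

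The only step requiring real attention --- though it is short --- is the degree count that pins every edge of a minimal generator down to a whisker; once that is in place, the remainder is a routine identification of the resulting discrete polymatroids with lattice cubes.
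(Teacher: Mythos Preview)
Your proof is correct and follows the same strategy as the paper's: invoke Lemma~\ref{whiskerLEMMA} to restrict $\mathfrak{c}$, then exploit the perfect matching of $W(G)$ to identify $\Bc(W(G),\mathfrak{c})$ as a single monomial (hence the polytope as a cube). The paper compresses the second step into a one-line reference to the proof of Theorem~\ref{regularTH}, whereas you spell out the degree-counting argument explicitly --- which is in fact slightly more careful, since Lemma~\ref{whiskerLEMMA} only bounds the $c_{n+i}$ from below rather than fixing them.
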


\begin{proof}
Every whiskered graph has a perfect matching.  Hence, by virtue of Lemma \ref{whiskerLEMMA}, the proof of Theorem \ref{regularTH} remains valid without modification.    
\end{proof}

\section{Cohen--Macaulay Cameron--Walker graphs} \label{sec9}
Finally, we discuss Gorentein polytopes arising from Cohen--Macaulay Cameron--Walker graphs. Let $r \geq 1$ and $s \geq 1$ be integers and $H$ a connected bipartite graph on the vertex set $\{x_1\ldots, x_r\}\sqcup \{x_{2r+1}, \ldots, x_{2r+s}\}$.  We then define $H_s^r$ to be the finite graph on 
$\{x_1, \ldots x_{2r}, x_{2r+1}, \ldots, x_{2r+3s}\}$
for which 
\begin{itemize}
\item[(i)] the induced subgraph of $H_s^r$ on $\{x_1\ldots, x_r\}\sqcup \{x_{2r+1}, \ldots, x_{2r+s}\}$ is $H$, and
\item[(ii)] for each $i$ with $1\leq i\leq r$, there is exactly one pendant edge $\{x_i, x_{r+i}\}$ attached to $x_i$, and
\item [(iii)] for each $i$ with $1\leq i\leq s$, there is exactly one pendant triangle with vertices  $x_{2r+i}, x_{2r+s+i}, x_{2r+2s+i}$ attached to $x_{2r+i}$.
\end{itemize}
Recall from \cite[Theorem 1.3]{CW} that every Cohen--Macaulay Cameron--Walker graph is of the form $H_s^r$.

\begin{Lemma} \label{cmcw1}
Let $G=H_s^r$ be a Cohen--Macaulay Cameron--Walker graph on $n=2r+3s$ vertices and  $\mathfrak{c}=(1, \ldots, 1)\in (\ZZ_{>0})^n$. Then $\conv(\Dc(G),\mathfrak{c}))$ is not Gorenstein.
\end{Lemma}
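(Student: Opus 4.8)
The plan is to use Lemma~\ref{criterion} in contrapositive form: it suffices to exhibit a single $\rho_{(G,\mathfrak{c})}$-closed and $\rho_{(G,\mathfrak{c})}$-inseparable proper subset $A\subset[n]$ for which $(|A|+1)/\rho_{(G,\mathfrak{c})}(A)$ is not a positive integer, since then no integer $k>0$ can satisfy $\rho_{(G,\mathfrak{c})}(A)=(|A|+1)/k$. First I would set up the combinatorial dictionary. Because $\mathfrak{c}=(1,\ldots,1)$, the ideal $(I(G)^q)_{\mathfrak{c}}$ is generated by the squarefree products of $q$ edges of $G$, i.e.\ by the monomials $\prod_{e\in M}e$ with $M$ a matching of $G$ of size $q$; hence $\delta_{\mathfrak{c}}(I(G))={\rm match}(G)$, the set $\Bc(G,\mathfrak{c})$ consists of the monomials $\prod_{e\in M}e$ with $M$ a maximum matching, and $\rho_{(G,\mathfrak{c})}(X)=\max\{\,|X\cap V(M)|\,\}$ as $M$ runs over the maximum matchings of $G$, where $V(M)=\bigcup_{e\in M}e$. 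A short count gives ${\rm match}(H_s^r)=r+s$: the $r$ pendant edges together with one edge of each pendant triangle form a matching, and conversely any matching $M$ satisfies $|M|=|V(M)\cap\{x_1,\ldots,x_r\}|+\#\{\text{triangles meeting }M\}\le r+s$, because every edge of $H_s^r$ is either incident to $\{x_1,\ldots,x_r\}$ or lies inside a pendant triangle. In particular every maximum matching saturates $x_1,\ldots,x_r$ and uses exactly one edge of each pendant triangle.

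Next I would fix the pendant triangle on $\{x_{2r+1},x_{2r+s+1},x_{2r+2s+1}\}$, whose apex $x_{2r+1}$ is the vertex joined to $H$, set $P:=\{\,p\in[r] : \{x_p,x_{2r+1}\}\in E(H)\,\}$ and $d:=|P|\ge 1$, and take
\[
A:=\{2r+1,\ 2r+s+1,\ 2r+2s+1\}\ \cup\ \{\,r+p : p\in P\,\},\qquad |A|=d+3 .
\]
The key computation is $\rho_{(G,\mathfrak{c})}(A)=d+2$. Indeed, any maximum matching covering all $d$ leaves $x_{r+p}$ ($p\in P$) must contain the pendant edges $\{x_p,x_{r+p}\}$, hence matches each $x_p$ with $p\in P$ to a leaf and none of them to $x_{2r+1}$; since the only remaining neighbours of $x_{2r+1}$ are the two tips $x_{2r+s+1},x_{2r+2s+1}$, at most two of the three triangle vertices can then be covered, so at most $d+2$ vertices of $A$ lie in such a matching, and $d+2$ is clearly attained. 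The same kind of reasoning yields the auxiliary values, for $S\subseteq\{r+p:p\in P\}$: $\rho(S)=|S|$; $\rho(S\cup\{v\})=|S|+1$ for a single triangle vertex $v$; $\rho(S\cup\{v,v'\})=|S|+2$ for two distinct triangle vertices; and $\rho\big(S\cup\{2r+1,2r+s+1,2r+2s+1\}\big)=|S|+3$ whenever $S$ omits at least one leaf $x_{r+p}$ — in that case one covers $x_{2r+1}$ via a still-available neighbour $x_p\in N_H(x_{2r+1})$ and the two tips by their common edge, then extends to a matching of size $r+s$.

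With these in hand the verification is routine. The set $A$ is $\rho_{(G,\mathfrak{c})}$-closed because appending to $A$ any vertex $v\notin A$ — whether $v=x_p$ with $p\in P$, another $x_q$ with $q\in[r]$, a pendant leaf not already in $A$, or a vertex of another pendant triangle — admits a maximum matching covering $d+3$ of the $d+4$ vertices of $A\cup\{v\}$, so $\rho_{(G,\mathfrak{c})}(A\cup\{v\})\ge d+3>\rho_{(G,\mathfrak{c})}(A)$. And $A$ is $\rho_{(G,\mathfrak{c})}$-inseparable because, using the auxiliary values, every partition $A=A'\sqcup A''$ into nonempty parts gives $\rho_{(G,\mathfrak{c})}(A')+\rho_{(G,\mathfrak{c})}(A'')=d+3>d+2=\rho_{(G,\mathfrak{c})}(A)$ (one checks the four ways the triple $\{2r+1,2r+s+1,2r+2s+1\}$ can be distributed between $A'$ and $A''$). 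Finally $(|A|+1)/\rho_{(G,\mathfrak{c})}(A)=(d+4)/(d+2)=1+2/(d+2)$ is not an integer because $d\ge 1$, so Lemma~\ref{criterion} shows $\conv(\Dc(G,\mathfrak{c}))$ is not Gorenstein.

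The step I expect to be the real work is the bookkeeping in the last paragraph: that the maximum matchings of $H_s^r$ can be steered to cover any prescribed $d+3$ vertices of $A\cup\{v\}$ (for closedness), and that no splitting of $A$ is tight (for inseparability). Both reduce to elementary matching manipulations — swapping a pendant edge for the $H$-edge at the apex, or conversely — but must be done case by case. The one genuinely rigid phenomenon being exploited is that covering all three vertices of a pendant triangle forces using an $H$-edge at its apex, which is incompatible with simultaneously covering the leaves hanging on every $H$-neighbour of that apex; this is exactly what produces the ``fractional'' ratio $(d+4)/(d+2)$.
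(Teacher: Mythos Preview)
Your proof is correct, and it takes a genuinely different route from the paper's. The paper works \emph{globally}: it shows that the full ground set $[n]$ is $\rho_{(G,\mathfrak{c})}$-inseparable by a short connectivity argument---given any bipartition $[n]=A_1\sqcup A_2$, connectedness of $H$ forces an $H$-edge from a triangle apex on one side to a base vertex $x_p$ whose leaf $x_{r+p}$ lies on the other side, and rerouting the ``canonical'' matching along that edge gives $\rho(A_1)+\rho(A_2)\ge 2r+2s+1>2r+2s=\rho([n])$. Together with the singleton $\{1\}$ (which is closed and inseparable with $\rho=1$ by Lemma~\ref{noinclusion1}), this already contradicts Lemma~\ref{criterion}. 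Your approach is instead \emph{local}: you build the obstruction set $A$ from a single pendant triangle and the leaves hanging on its apex's $H$-neighbours, and verify directly that $(|A|+1)/\rho(A)=(d+4)/(d+2)$ is non-integral for $d\ge 1$. The paper's argument is shorter once one spots the connectivity trick and avoids all case analysis; yours requires more matching bookkeeping but has the pleasant feature of showing that the failure of Gorensteinness is already visible in the neighbourhood of any one pendant triangle, without invoking the global structure of $H$. One small wording issue: in your matching-number count, ``triangles meeting $M$'' should be read as ``triangles contributing an edge to $M$'' (an apex matched via an $H$-edge does not count here); the conclusion $\mathrm{match}(H_s^r)=r+s$ is of course unaffected.
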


\begin{proof}
We first show that $[n]$ is $\rho_{(G,\mathfrak{c})}$-inseparable. Indeed, let $A_1, A_2$ be proper disjoint subsets of $[n]$ with $A_1\cup A_2=[n]$. For $k=1,2$, set$$B_k:=A_k\cap\{r+1, \ldots, 2r\}, \ \ \ \ \ \ \ \ \ \ {\rm and} \ \ \ \ \ \ \ \ \ \ C_k:=A_k\cap\{2r+1, \ldots, 2r+s\}.$$Also, set $B_k':=\{i-r\mid i\in B_k\}$. Note that ${\rm match}(G)=r+s$ and$$w:=\frac{x_1x_2\cdots x_{2r+3s}}{x_{2r+1}\cdots x_{2r+s}}=\prod_{i=1}^r(x_ix_{i+r})\prod_{j=1}^s(x_{2r+s+j}x_{2r+2s+j})\in \Bc(G,\mathfrak{c}).$$This shows that $\rho_{(G,\mathfrak{c})}(A_k)\geq |A_k|-|C_k|$, for $k=1,2$. Notice that $B_1'\sqcup B_2'=\{1, \ldots,r\}$ and $C_1\sqcup C_2=\{2r+1, \ldots, 2r+s\}$. Since $G$ is a connected graph, either a vertex in $C_1$ is adjacent to a vertex in $B_2'$, or a vertex in $C_2$ is adjacent to a vertex in $B_1'$. Without loss of generality, we may assume that a vertex in $C_1$ is adjacent to a vertex in $B_2'$. In other words, there is a vertex $x_p\in C_1$ and a vertex $x_q\in B_2'$ such that $\{x_p,x_q\}$ is an edge of $G$. This yields that $$\frac{x_pw}{x_{q+r}}=\frac{(x_px_q)w}{(x_qx_{q+r})}\in \Bc(G,\mathfrak{c})$$which implies that$$\rho_{(G,\mathfrak{c})}(A_1)\geq \sum_{\ell \in A_1}{\rm deg}_{x_{\ell}}(x_pw/x_{q+r})=|A_1|-|C_1|+1.$$Consequently,
\begin{align*}  
& \rho_{(G,\mathfrak{c})}(A_1)+\rho_{(G,\mathfrak{c})}(A_2)\geq (|A_1|-|C_1|+1)+(|A_2|-|C_2|)\\ & =n-s+1=2r+3s-s+1 =2r+2s+1\\ & =\rho_{(G,\mathfrak{c})}([n])+1,
\end{align*}
where the last equality follows from ${\rm match}(G)=r+s$. Thus, $[n]$ is $\rho_{(G,\mathfrak{c})}$-inseparable. It is obvious that $[n]$ is $\rho_{(G,\mathfrak{c})}$-closed too. On the other hand, by Lemma \ref{noinclusion1}, the singleton $\{1\}$ is $\rho_{(G,\mathfrak{c})}$-closed and $\rho_{(G,\mathfrak{c})}$-inseparable. Therefore, Lemma \ref{criterion} says that the lattice polytope $\conv(\Dc(W(G),\mathfrak{c}))$ is not Gorenstein.
\end{proof}

\begin{Lemma}
     \label{cmcw}
Let $G=H_s^r$ be a Cohen--Macaulay Cameron--Walker graph on $2r+3s$ vertices and $\mathfrak{c}=(c_1, \ldots, c_{2r+3s})\in (\ZZ_{>0})^{2r+3s}$. Then  $\conv(\Dc(G),\mathfrak{c}))$ is Gorenstein if and only if the following conditions hold:
\begin{itemize}
\item [(i)] $c_i=2$ for each $i\in [2r+3s]\setminus \{r+1, \ldots, 2r\}$ and
\item[(ii)] $c_i\geq 2$ for each $i\in\{r+1, \ldots, 2r\}$.
\end{itemize}
\end{Lemma}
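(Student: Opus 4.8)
The plan is to prove the two implications separately; the ``if'' direction is short once $\Bc(G,\mathfrak{c})$ is identified, while the ``only if'' direction is a case analysis on the ground set rank function in which the pendant--triangle apices $x_{2r+1},\dots,x_{2r+s}$ are the only delicate vertices. For sufficiency, assume (i) and (ii) and write $e_j=\{x_j,x_{r+j}\}$ for the pendant edges ($1\le j\le r$) and $f_i,g_i,h_i$ for the three edges of the $i$-th pendant triangle ($1\le i\le s$). The monomial
\[
u_0:=\Big(\prod_{j=1}^{r}e_j^{\,2}\Big)\Big(\prod_{i=1}^{s}f_ig_ih_i\Big)
\]
is a product of $2r+3s$ edges, equals $x_1^2\cdots x_n^2$, and is $\mathfrak{c}$-bounded by (i)--(ii), so $\delta:=\delta_{\mathfrak{c}}(I(G))\ge 2r+3s$. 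Conversely, in any product of edges the exponent of a pendant vertex $x_{r+j}$ is at most $\min\{c_{r+j},c_j\}=2$ (its only incident edge is $e_j$), and the exponent of every non-pendant vertex $x_i$ is at most $c_i=2$; summing the exponents of a minimal generator, which has degree $2\delta$, gives $2\delta\le 2n$, hence $\delta=2r+3s$ and every minimal generator has all exponents equal to $2$. Therefore $\rho_{(G,\mathfrak{c})}(A)=2|A|$ for every $A\subseteq[n]$, so $\rho_{(G,\mathfrak{c})}$ is additive, the only $\rho_{(G,\mathfrak{c})}$-closed and $\rho_{(G,\mathfrak{c})}$-inseparable subsets are the singletons, each of value $2=\tfrac11(1+1)$, and Lemma \ref{criterion} (with $k=1$) shows that $\conv(\Dc(G,\mathfrak{c}))$ is Gorenstein.

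For necessity, assume $\conv(\Dc(G,\mathfrak{c}))$ is Gorenstein. First I would check the hypothesis of Lemma \ref{noinclusion1} for every $i\in[n]\setminus\{2r+1,\dots,2r+s\}$: inspecting $H_s^r$, the only vertices whose neighbourhood lies inside a single other neighbourhood are the pendant vertices $x_{r+j}$, the triangle-base vertices $x_{2r+s+i},x_{2r+2s+i}$, and the vertices $x_j$ with $j\le r$, and in each of these cases no nonadjacent vertex dominates them. Hence each such singleton $\{i\}$ is $\rho_{(G,\mathfrak{c})}$-closed and $\rho_{(G,\mathfrak{c})}$-inseparable, and Lemma \ref{criterion} gives an integer $k\ge1$ with $\rho_{(G,\mathfrak{c})}(\{i\})=2/k$ for all such $i$; thus $k\in\{1,2\}$ and this value is the constant $1$ or $2$. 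By Lemma \ref{rhomin}: if $k=1$ then $c_j=2$ for $j\le r$, $c_{r+j}\ge2$ for $1\le j\le r$, and $c_{2r+s+i}=c_{2r+2s+i}=2$ (the second alternative in each minimum is impossible, since it would force the value of an adjacent pendant or base singleton down to $1$), which is exactly (ii) together with all of (i) except the apex coordinates; if $k=2$ then $c_j=1$ for $j\le r$ and $c_{2r+s+i}=c_{2r+2s+i}=1$.

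It remains to pin down the apex coordinates $c_{2r+i}$ and to exclude $k=2$. The uniform device is: if a singleton $\{2r+i\}$ is $\rho_{(G,\mathfrak{c})}$-closed it is (trivially) $\rho_{(G,\mathfrak{c})}$-inseparable, so Lemma \ref{criterion} forces $\rho_{(G,\mathfrak{c})}(\{2r+i\})=2/k$, and since $H$ is connected $\sum_{x_t\in N_G(x_{2r+i})}c_t=c_{2r+s+i}+c_{2r+2s+i}+\sum_{x_j\in N_H(x_{2r+i})}c_j\ge 3\cdot(2/k)>2/k$, whence Lemma \ref{rhomin} gives $c_{2r+i}=2/k$; if instead $\{2r+i\}$ is not $\rho_{(G,\mathfrak{c})}$-closed, choose $A\ni 2r+i$ maximal with $\rho_{(G,\mathfrak{c})}(A)=\rho_{(G,\mathfrak{c})}(\{2r+i\})$ — by monotonicity of $\rho_{(G,\mathfrak{c})}$ any proper splitting of $A$ yields a smaller set realising the same value, contradicting maximality, so $A$ is $\rho_{(G,\mathfrak{c})}$-closed and $\rho_{(G,\mathfrak{c})}$-inseparable with $|A|\ge 2$ — and Lemma \ref{criterion} gives $\rho_{(G,\mathfrak{c})}(\{2r+i\})=\rho_{(G,\mathfrak{c})}(A)=\tfrac1k(|A|+1)\ge 3/k$. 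When $k=1$ I would combine this with the hands-on fact — proved by swapping the edge at the apex in a generator realising the maximal $x_{2r+i}$-exponent against a triangle edge or an $H$-edge, in the manner of the proofs of Lemmas \ref{noinclusion1}, \ref{regular} and \ref{whiskerLEMMA}, and crucially using $c_{r+j}\ge2$ and $c_{2r+s+i}=c_{2r+2s+i}=2$ — that $\{2r+i\}$ is in fact always $\rho_{(G,\mathfrak{c})}$-closed; then $\rho_{(G,\mathfrak{c})}(\{2r+i\})=2$, and as $\sum_{x_t\in N_G(x_{2r+i})}c_t\ge 6$ Lemma \ref{rhomin} forces $c_{2r+i}=2$, so (i) holds. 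When $k=2$: if every apex singleton is $\rho_{(G,\mathfrak{c})}$-closed then $c_{2r+i}=1$ for all $i$, so in every edge product ${\rm deg}_{x_{r+j}}\le\min\{c_{r+j},c_j\}=1$ and ${\rm deg}_{x_{2r+i}}\le1$, whence $\Bc(G,\mathfrak{c})=\Bc(G,(1,\dots,1))$ and $\conv(\Dc(G,\mathfrak{c}))$ is not Gorenstein by Lemma \ref{cmcw1}, a contradiction; otherwise some apex singleton fails to be closed, the maximal set $A$ above is $\rho_{(G,\mathfrak{c})}$-closed and $\rho_{(G,\mathfrak{c})}$-inseparable with $\rho_{(G,\mathfrak{c})}(A)=\tfrac12(|A|+1)$, and one reaches a contradiction by the same bookkeeping as in Lemma \ref{cmcw1} and in Claim~2 of Lemma \ref{whiskerLEMMA}, tracking which vertices the generators with maximal $x_{2r+i}$-exponent necessarily meet, so that $|A|$ is forced into a range incompatible with $\rho_{(G,\mathfrak{c})}(A)=\rho_{(G,\mathfrak{c})}(\{2r+i\})$. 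Hence $k=2$ cannot occur and (i)--(ii) hold.

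The hard part is the apex analysis in the last paragraph. Because a pendant vertex $x_{r+j}$ with $x_j\in N_H(x_{2r+i})$ satisfies $N_G(x_{r+j})\subseteq N_G(x_{2r+i})$, the apices are not covered by Lemma \ref{noinclusion1}, so there is no black box: one must show by explicit edge swaps that adjoining any further vertex to $\{2r+i\}$ strictly increases $\rho_{(G,\mathfrak{c})}$ (or else analyse precisely the maximal set $A$ it fails for), and one must treat separately the degenerate triangle configuration $\{x_{2r+s+i},x_{2r+2s+i}\}^2$ and the competition for the apex budget among triangle edges, $H$-edges, and the neighbouring pendant edges — this last point being exactly where the hypotheses $c_j=2$ $(j\le r)$ and $c_{r+j}\ge 2$ derived in the $k=1$ case get used.
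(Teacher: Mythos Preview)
Your sufficiency argument is correct and matches the paper's. Your necessity argument has the right shape but leaves the decisive step unproved, and your organisation makes that step harder than necessary.

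The paper does \emph{not} split first on $k\in\{1,2\}$. It splits on whether every apex singleton $\{2r+i\}$ is $\rho_{(G,\mathfrak{c})}$-closed. In the ``all closed'' case it gets $\rho_1=\cdots=\rho_n\in\{1,2\}$, rules out $1$ via Lemma~\ref{cmcw1}, and reads off (i)--(ii) from Lemma~\ref{rhomin}. In the ``some apex not closed'' case it takes a maximal $A\ni 2r+i$ with $\rho_{(G,\mathfrak{c})}(A)=\rho_{2r+i}$ and proves two concrete claims: (1) $A=\{2r+i\}\cup B$ with $B\subseteq\{r+1,\ldots,2r\}$ and $x_t$ adjacent to $x_{2r+i}$ whenever $r+t\in B$; (2) $\rho_{(G,\mathfrak{c})}(A)\ge c_{2r+s+i}+c_{2r+2s+i}+\sum_{r+t\in B}c_t$. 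Claim (2) is obtained by showing that in a generator $v$ realising $\rho_{2r+i}$, every edge through $x_t$ (for $r+t\in B$), through $x_{2r+s+i}$, and through $x_{2r+2s+i}$ must also pass through $x_{2r+i}$; this is where the pendant triangle is used, via the rewriting $(x_{2r+s+i}x_{r+t})v=(x_{2r+i}x_{2r+s+i})(x_tx_{r+t})v/(x_tx_{2r+i})$ and the analogous move killing the edge $\{x_{2r+s+i},x_{2r+2s+i}\}$. Since $\{2r+s+i\}$ is closed and inseparable with value $1$ or $2$, Lemma~\ref{criterion} forces $\rho_{(G,\mathfrak{c})}(A)\in\{(|A|+1)/2,\ |A|+1\}$, and Claim~(2) gives $\rho_{(G,\mathfrak{c})}(A)\ge |A|+1$ in the first case and $\ge |A|+2$ in the second, a contradiction either way. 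Thus the ``not closed'' case never occurs, uniformly in $k$.

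Your write-up gestures at exactly this (``or else analyse precisely the maximal set $A$ it fails for'', ``the same bookkeeping as in Claim~2 of Lemma~\ref{whiskerLEMMA}''), but you do not actually state or prove Claims (1) and (2); without them the apex analysis is not a proof. Your alternative plan for $k=1$, to show directly that each apex singleton is closed by a single edge swap, does not go through in one move: when every edge of the factorisation through $x_t$ already ends at $x_{2r+i}$, swapping one of them for $\{x_t,x_{r+t}\}$ lowers $\deg_{x_{2r+i}}$, and you are forced into the triangle bookkeeping above anyway. So Claim~(2) is unavoidable.

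One small slip: your inseparability argument for $A$ (``a smaller set realising the same value, contradicting maximality'') is not right; maximality constrains \emph{larger} sets. The correct line is: if $A=A_1\sqcup A_2$ with $2r+i\in A_1$, then $\rho_{(G,\mathfrak{c})}(A_1)\ge\rho_{2r+i}=\rho_{(G,\mathfrak{c})}(A)$, so $\rho_{(G,\mathfrak{c})}(A_1)+\rho_{(G,\mathfrak{c})}(A_2)>\rho_{(G,\mathfrak{c})}(A)$.
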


\begin{proof}
Set $n:=|V(G)|=2r+3s$. First, suppose that (i) and (ii) holds. Then$$x_1^2x_2^2\cdots x_n^2=\prod_{i=1}^r(x_ix_r+i)^2\prod_{i=1}^s\big((x_{2r+i}x_{2r+s+i})(x_{2r+s+i}x_{2r+2s+i})(x_{2r+i}x_{2r+2s+i})\big)$$ belongs to $\Bc(G,\mathfrak{c})$. In other words, $\Bc(G,\mathfrak{c}) = \{x_1^2x_2^2\cdots x_n^2\}$. Thus, $\conv(\Dc(G,\mathfrak{c}))$ is equal to $\Qc'_n + (1,\ldots,1)$, which is Gorenstein.

Conversely, suppose that $\conv(\Dc(G,\mathfrak{c}))$ is Gorenstein. We prove (i) and (ii) hold. Set $\delta:=\delta_{\mathfrak{c}}(I(G))$. Also, for each $i=1, \ldots, n$ set $\rho_i:=\rho_{(G,\mathfrak{c})}$. By Lemma \ref{noinclusion1}, for each $i\in [n]$ with $i\notin \{2r+1, \ldots, 2r+s\}$, the singleton $\{i\}$ is $\rho_{(G,\mathfrak{c})}$-closed. So, we have the following cases.

\medskip

{\bf Case 1.} Suppose that for each $i\in \{2r+1, \ldots, 2r+s\}$, the singleton $\{i\}$ is $\rho_{(G,\mathfrak{c})}$-closed. This implies that for each $i\in [n]$, the the singleton $\{i\}$ is $\rho_{(G,\mathfrak{c})}$-closed. Obviously, every singleton is $\rho_{(G,\mathfrak{c})}$-inseparable too. Thus, we conclude from Lemma \ref{criterion} that either, $\rho_1=\cdots = \rho_n = 1$ or $\rho_1=\cdots = \rho_n = 2$. In the first case, it follows from Lemma \ref{cmcw1} that $\conv(\Dc(G,\mathfrak{c}))$ is not Gorenstein. Therefore, assume that $\rho_1=\cdots = \rho_n = 2$. It follows from these equalities that $c_i\geq 2$, for each $i=1, \ldots, n$. Moreover, since for each $i\in [2r+3s]\setminus \{r+1, \ldots, 2r\}$, we have ${\rm deg}_G(x_i)\geq 2$, using Lemma \ref{rhomin}, we deduce that $c_i=2$. Thus, (i) and (ii) hold in this case.

\medskip

{\bf Case 2.} Suppose that there is an integer $i$ with $i\in \{2r+1, \ldots, 2r+s\}$ such that the singleton $\{i\}$ is not $\rho_{(G,\mathfrak{c})}$-closed. Let $A$ be a maximal subset of $[n]$ with $i\in A$ and $\rho_{(G,\mathfrak{c})}(A)=\rho_i$. In particular, $|A|\geq 2$. 

\medskip 

{\bf Claim 1.} There is a nonempty subset $B$ of $\{r+1, \ldots, 2r\}$ such that $A=B\cup\{i\}$. Moreover, if $r+t\in B$, then the vertices $x_t$ and $x_i$ are adjacent in $G$.

\medskip 

{\it Proof of Claim 1.} Let $v\in \Bc(G,\mathfrak{c})$ be a monomial with ${\rm deg}_{x_i}(v)=\rho_i$. Then $v$ can be written as $v=f_1\cdots f_{\delta}$, where $f_1, \ldots, f_{\delta}$ are edges of $G$. 

We show that every integer $j$ with $j\notin \{r+1, \ldots, 2r\}\cup\{i\}$ does not belong to $A$. Indeed, if $x_j$ divides $v$, then$$\rho_{(G,\mathfrak{c})}(\{i,j\}) \geq {\rm deg}_{x_i}(v)+{\rm deg}_{x_j}(v)>{\rm deg}_{x_i}(v)=\rho_i.$$So, in this case, $j\notin A$. Assume that $x_j$ does not divide $v$. Since $j\notin \{r+1, \ldots, 2r\}$, it follows from the structure of $G$ that there is a vertex $x_{\ell}\in N_G(x_j)\setminus N_G(x_i)$. If $x_{\ell}$ does not divide $v$, then $(x_jx_{\ell})v\in (I(G)^{\delta+1})_{\mathfrak{c}}$ which is a contradiction. Therefore, $x_{\ell}$ divides $v$. Hence, in the representation of $v$ as $v=f_1\cdots f_{\delta}$, there is an edge, say $f_1$ which is incident to $x_{\ell}$. In other words, $f_1=\{x_{\ell}, x_{\ell'}\}$ for a vertex $x_{\ell'}\in V(G)$. Since $x_{\ell}\notin N_G(x_i)$, one has $x_{\ell'}\neq x_i$. Then$$\frac{vx_j}{x_{\ell'}}=(x_jx_{\ell})f_2\cdots f_{\delta}\in \Bc(G,\mathfrak{c}).$$This yields that
\begin{align*}
\rho_{(G,\mathfrak{c})}(\{i, j\})& \geq {\rm deg}_{x_i}(vx_j/x_{\ell'})+{\rm deg}_{x_j}(vx_j/x_{\ell'})\\ &>{\rm deg}_{x_i}(vx_j/x_{\ell'})={\rm deg}_{x_i}(v)=\rho_i.
\end{align*}
Hence, $j\notin A$. Consequently, there is a subset $B$ of $\{r+1, \ldots, 2r\}$ such that $A=B\cup\{i\}$. Since $|A|\geq 2$, we deduce the $B$ is nonempty. The same argument as above shows that if $r+t\in B$, then $N_G(x_{r+t})\subseteq N_G(x_{i})$. In other words, $x_t$ and $x_i$ are adjacent in $G$. This proves Claim 1.

\medskip

{\bf Claim 2.} $\rho_{(G,\mathfrak{c})}(A)\geq c_{i+s}+c_{i+2s}+\sum_{r+k\in B}c_k$.

\medskip

{\it Proof of Claim 2.} Let $v$ be the monomial defined in the proof of Claim 1. Assume that $r+k\in B\subset A$. Since $\rho_{(G,\mathfrak{c})}(\{i, r+k\})=\rho_i$, it follows that $v$ is not divisible by $x_{r+k}$. If ${\rm deg}_{x_k}(v)< c_k$, then  $(x_kx_{r+k})v\in (I(G)^{\delta+1})_{\mathfrak{c}}$ which is a contradiction. Thus, ${\rm deg}_{x_k}(v)= c_k$, for each integer $k$ with $r+k\in B$. Assume that in the representation of $v$ as $v=f_1\cdots f_{\delta}$, there is an edge, say $f_{\delta}$ which is incident to $x_k$ but not to $x_i$. Then $f_{\delta}=\{x_k,x_{k'}\}$, for some vertex $x_{k'}\neq x_i$. This yields that$$\frac{vx_{r+k}}{x_{k'}}=f_1\cdots f_{\delta-1}(x_kx_{r+k})\in \Bc(G,\mathfrak{c}).$$Thus,
\begin{align*}
\rho_{(G,\mathfrak{c})}(\{i, r+k\})& \geq {\rm deg}_{x_i}(vx_{r+k}/x_{k'})+{\rm deg}_{x_{r+k}}(vx_{r+k}/x_{k'})\\ &>{\rm deg}_{x_i}(vx_{r+k}/x_{k'})={\rm deg}_{x_i}(v)=\rho_i,
\end{align*}
which is a contradiction as $\rho_{(G,\mathfrak{c})}(\{i, r+k\})=\rho_i$. This contradiction shows that in the representation of $v$ as $v=f_1\cdots f_{\delta}$, if an edge $f_{\ell}$ is incident to $x_k$, it is incident to $x_i$ too. Suppose that ${\rm deg}_{x_{i+s}}(v)< c_{i+s}$. Then for each integer $k$ with $r+k\in B$, one has $$(x_{i+s}x_{r+k})v=(x_ix_{i+s})(x_kx_{r+k})v/(x_kx_i)\in (I(G)^{\delta+1})_{\mathfrak{c}},$$a contradiction. Therefore, ${\rm deg}_{x_{i+s}}(v)= c_{i+s}$. By symmetry, ${\rm deg}_{x_{i+2s}}(v)= c_{i+2s}$. If in the representation of $v$ as $v=f_1\cdots f_{\delta}$, there is an edge which is equal to $\{x_{i+s}, x_{i+2s}\}$, then for any integer $k$ with $r+k\in B$, one has $$\frac{vx_{r+k}}{x_{i+2s}}=\frac{(x_kx_{r+k})(x_ix_{i+s})v}{(x_ix_k)(x_{i+s} x_{i+2s})}\in \Bc(G,\mathfrak{c}).$$Therefore,
\begin{align*}
\rho_{(G,\mathfrak{c})}(\{i, r+k\})& \geq {\rm deg}_{x_i}(vx_{r+k}/x_{i+2s})+{\rm deg}_{x_{r+k}}(vx_{r+k}/x_{i+2s})\\ &>{\rm deg}_{x_i}(vx_{r+k}/x_{i+2s})={\rm deg}_{x_i}(v)=\rho_i,
\end{align*}
which is a contradiction. Hence, the edge $\{x_{i+s}, x_{i+2s}\}$ does not appear in the representation of $v$. In other words, in the representation of $v$, any edge incident to $x_{i+s}$ (resp. $x_{i+2s}$) is $\{x_i, x_{i+s}\}$ (resp. $\{x_i, x_{i+2s}\}$). Consequently,
\begin{align*}
\rho_{(G,\mathfrak{c})}(A)&=\rho_i={\rm deg}_{x_i}(v)\geq {\rm deg}_{x_{s+i}}(v)+{\rm deg}_{x_{2s+i}}(v)+\sum_{r+k\in B}{\rm deg}_{x_k}(v)\\ & =c_{i+s}+c_{i+2s}+\sum_{r+k\in B}c_k.   
\end{align*}
This proves Claim 2.

\medskip

We show that $A$ is $\rho_{(G,\mathfrak{c})}$-inseparable. Indeed, assume that $A_1$ and $A_2$ are proper disjoint subsets of $A$ with $A_1\cup A_2=A$. We may assume that $x_i\in A_1$. Then $\rho_{(G,\mathfrak{c})}(A_1)=\rho_i$. Hence, $\rho_{(G,\mathfrak{c})}(A_1)+\rho_{(G,\mathfrak{c})}(A_2)> \rho_{(G,\mathfrak{c})}(A)$. Thus, $A$ is a $\rho_{(G,\mathfrak{c})}$-inseparable subset of $A$. Since $A$ is a maximal subset of $[n]$ with $\rho_{(G,\mathfrak{c})}(A)=\rho_i$, we conclude that $A$ is $\rho_{(G,\mathfrak{c})}$-closed. By Lemma \ref{noinclusion1}, the singletons $\{i+s\}$ is a $\rho_{(G,\mathfrak{c})}$-closed and $\rho_{(G,\mathfrak{c})}$-inseparable subset of $[n]$. Therefore, by Lemma \ref{criterion}, one has either $\rho_{i+s}=1$ or $\rho_{i+s}=2$. However, $\rho_{i+s}=1$ is not possible, as $A$ is $\rho_{(G,\mathfrak{c})}$-closed and $\rho_{(G,\mathfrak{c})}$-inseparable with $\rho_{(G,\mathfrak{c})}(A)\geq |A|+1$ (Claim 2). So, suppose that $\rho_{i+s}=2$.  It then follows from Lemma \ref{criterion} that $\rho_{(G,\mathfrak{c})}=|A|+1$. However, since $c_{i+s}\geq \rho_{i+s}=2$, we deduce from Claim 2 that $\rho_{(G,\mathfrak{c})}\geq |A|+2$, which is a contradiction. 
\end{proof}

The following theorem is an immediate consequence of Lemma \ref{cmcw} and its proof.

\begin{Theorem}
    \label{thm:CMCW}  
The Gorenstein polytopes of the form $\conv(\Dc(G),\mathfrak{c}))$, where $G$ is a Cohen--Macaulay Cameron--Walker graph on $n$ vertices and where $\mathfrak{c}\in (\ZZ_{>0})^{n}$, are exactly ${\Qc}'_{2n} + (1,\ldots, 1)$.     
\end{Theorem}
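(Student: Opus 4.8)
The plan is to read the statement off from Lemma~\ref{cmcw}. By \cite[Theorem 1.3]{CW} every Cohen--Macaulay Cameron--Walker graph has the form $G = H_s^r$, so it suffices to treat this case. Lemma~\ref{cmcw} tells us that, writing $n = 2r+3s$, the polytope $\conv(\Dc(G,\mathfrak{c}))$ is Gorenstein if and only if $\mathfrak{c}=(c_1,\ldots,c_n)$ satisfies $c_i = 2$ for all $i\in [n]\setminus\{r+1,\ldots,2r\}$ together with $c_i\geq 2$ for all $i\in\{r+1,\ldots,2r\}$. Hence the only thing left is to check that every such $\mathfrak{c}$ yields one and the same polytope, namely $\Qc'_n+(1,\ldots,1)$.

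For this I would reuse the computation in the first part of the proof of Lemma~\ref{cmcw}: under conditions (i) and (ii) the monomial $x_1^2 x_2^2\cdots x_n^2$ lies in $\Bc(G,\mathfrak{c})$ --- it is the product of the $r$ whisker edges, each taken twice, and the $3s$ edges of the $s$ pendant triangles --- and in fact $\Bc(G,\mathfrak{c}) = \{x_1^2 x_2^2\cdots x_n^2\}$. The uniqueness is forced by the shape of $H_s^r$: each whisker vertex $x_{r+i}$ has $x_i$ as its unique neighbor, so every occurrence of $x_{r+i}$ in a monomial of $(I(G)^q)_{\mathfrak{c}}$ is paired with an occurrence of $x_i$, whence ${\rm deg}_{x_{r+i}}\leq {\rm deg}_{x_i}\leq c_i = 2$; together with $c_i = 2$ for the other variables this bounds every exponent by $2$, so $\delta_{\mathfrak{c}}(I(G))\leq n$, and then the degree-$2n$ monomial above is the unique minimal generator of top degree.

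From $\Bc(G,\mathfrak{c}) = \{x_1^2\cdots x_n^2\}$ it follows that $\Mc(G,\mathfrak{c}) = M_{\leq x_1^2\cdots x_n^2}$ is the set of all monomials $x_1^{a_1}\cdots x_n^{a_n}$ with $0\leq a_i\leq 2$, so $\Dc(G,\mathfrak{c}) = \{0,1,2\}^n$ and $\conv(\Dc(G,\mathfrak{c})) = [0,2]^n = \Qc'_n+(1,\ldots,1)$, exactly as in Example~\ref{2...2}. Conversely the choice $\mathfrak{c}=(2,\ldots,2)$ satisfies (i) and (ii) and realizes $\Qc'_n+(1,\ldots,1)$, which is Gorenstein because $\Qc'_n$ is reflexive. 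This gives the classification.

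I do not expect a genuine obstacle here: all the delicate work --- both the Gorenstein criterion and the determination of $\Bc(G,\mathfrak{c})$ --- is already contained in Lemma~\ref{cmcw}. The one point that merits an explicit remark is that the conclusion does not depend on the particular values $c_{r+i}\geq 2$, which is exactly why one needs the full equality $\Bc(G,\mathfrak{c}) = \{x_1^2\cdots x_n^2\}$ rather than merely the membership $x_1^2\cdots x_n^2\in\Bc(G,\mathfrak{c})$.
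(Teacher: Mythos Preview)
Your proposal is correct and follows exactly the paper's approach: the theorem is stated there as ``an immediate consequence of Lemma~\ref{cmcw} and its proof,'' and you have simply spelled out that immediate consequence, including the identification $\Bc(G,\mathfrak{c})=\{x_1^2\cdots x_n^2\}$ already established in the first paragraph of the proof of Lemma~\ref{cmcw}. Your added justification for uniqueness (bounding $\deg_{x_{r+i}}$ via the leaf structure) is a nice elaboration but not a departure from the paper's line.
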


\section*{Acknowledgments}
The second author is supported by a FAPA grant from Universidad de los Andes.

\section*{Statements and Declarations}
The authors have no Conflict of interest to declare that are relevant to the content of this article.

\section*{Data availability}
Data sharing does not apply to this article as no new data were created or analyzed in this study.


\begin{thebibliography}{99}

\bibitem{bcl}
M. Behzad, G. Chartrand, L. Lesniak-Foster, {\it Graphs \& Digraphs}, Prindle, Weber \& Schmidt, Boston, 1979.

\bibitem{HH_discrete}
J. Herzog and T. Hibi, Discrete polymatroids, {\em J. Algebraic Combin.} {\bf 16} (2002), 239--268.

\bibitem{Hcombinatorica}
T. Hibi, Dual polytopes of rational convex polytopes, {\em Combinatorica} {\bf 12} (1992), 237--240.

\bibitem{CW}
T.~Hibi, A.~Higashitani, K.~Kimura and A.~B.~O'Keefe, Algebraic study on Cameron--Walker graphs, {\em J. of Algebra} {\bf 422} (2015), 257--269.

\bibitem{HSF}
T.~Hibi and S.~A.~ Seyed Fakhari, Bounded powers of edge ideals: regularity and linear quotients, {\em Proc. Amer. Math. Soc.} {\bf 153} (2025), 4619--4631.

\bibitem{k}
M. Kano, Factors of regular graphs, {\it J. Combin. Theory, Ser. B} {\bf 41} (1986), 27--36.

\end{thebibliography}
\end{document}